\theoremstyle{definition}
\newtheorem{theorem}{Theorem}[section]
\newtheorem{prop}[theorem]{Proposition}
\newtheorem{lemma}[theorem]{Lemma}
\newtheorem{cor}[theorem]{Corollary}
\theoremstyle{definition}
\newtheorem{conj}[theorem]{Conjecture}
\newtheorem{definition}[theorem]{Definition}
\newtheorem{observation}[theorem]{Observation}
\theoremstyle{remark}
\newtheorem{rmk}[theorem]{Remark}
\newcommand{\F}{\mathbb{F}}
\newcommand{\Z}{\mathbb{Z}}
\newcommand{\Q}{\mathbb{Q}}
\newcommand{\R}{\mathbb{R}}
\newcommand{\Zt}{\mathbb{Z}^t}
\newcommand{\Qt}{\mathbb{Q}^t}
\newcommand{\Rt}{\mathbb{R}^t}
\newcommand{\A}{\mathcal{A}}
\newcommand{\B}{\mathcal{B}}
\newcommand{\X}{\mathcal{X}}
\newcommand{\W}{\mathcal{W}}
\newcommand{\C}{\mathbb{C}}
\newcommand{\vG}{G^\vee}
\newcommand{\cO}{\mathcal{O}}
\newcommand{\cK}{\mathcal{K}}
\newcommand{\Gr}{{\rm Gr}}
\newcommand{\GL}{\textnormal{GL}}
\newcommand{\SL}{\textnormal{SL}}
\newcommand{\PGL}{\textnormal{PGL}}
\newcommand{\val}{\operatorname{val}}
\newcommand{\Spec}{\operatorname{Spec}}
\newcommand{\Conf}{\operatorname{Conf}}
\def\tGr{\widetilde \Gr}
\newcounter{Qcount}
\begin{document}

\stepcounter{Qcount}

\title{Tropicalization of Positive Grassmannians}

\author{Ian Le}
\address{Perimeter Institute for Theoretical Physics\\ Waterloo, ON N2L 2Y5}
\email{ile@perimeterinstitute.ca}

\author{Chris Fraser}
\address{Indiana University-Purdue University Indianapolis\\ Indianapolis, IN 46202}
\email{chfraser@iupui.edu}

\begin{abstract}
We introduce combinatorial objects which are parameterized by the positive part of the tropical Grassmannian $\Gr(k,n)$. Our method is to relate the Grassmannian to configuration spaces of flags. By work of the first author, and of Goncharov and Shen, configuration spaces of flags naturally tropicalize to give configurations of points in the affine building, which we call higher laminations. We use higher laminations to give two dual objects that are parameterized by the positive tropicalization of $\Gr(k,n)$: equivalence classes of higher laminations; or certain restricted subset of higher laminations. This extends results of Speyer and Sturmfels on the tropicalization of $\Gr(2,n)$, and of Speyer and Williams on the tropicalization of $\Gr(3,6)$ and $\Gr(3,7)$. We also analyze the $\X$-variety associated to the Grassmannian, and give an interpretation of its positive tropicalization.
\end{abstract}

\maketitle

\tableofcontents

\section{Introduction}

Given an affine algebraic variety $X$ along with a set of generators for the ring of functions on $X$, we can form the tropicalization of $X$ with respect to these generators. For example, when $X$ is the affine cone over the Grassmannian, it is natural to take the Pl\"ucker coordinates on $X$ as the set of generators.

Many varieties in representation theory also come with positive structures coming from the theory of total positivity \cite{Lu}. A positive structure on an affine variety is defined by giving its semi-ring of positive functions. A positive structure on a variety $X$ allows one to define the set of positive tropical points of $X$, which is generally a subset of the tropical points of $X$. 

In \cite{SS} Speyer and Sturmfels show that the tropicalization of $\Gr(2,n)$ parameterizes metric trees with $n$ leaves. (The leaves are allowed to have negative length, but let us ignore this.) Moreover, they show that the positive part of $\Gr(2,n)$ parameterizes \emph{planar} trees; positivity induces a cyclic ordering of the edges at each vertex. In \cite{SW}, Speyer and Williams explicitly describe the positive tropicalization of $\Gr(3,6)$ and $\Gr(3,7)$. In particular, they give the fan structure on these positive tropical spaces and compare them to some related associahedra. However, they don't give an interpretation of the tropical points that generalizes trees for $\Gr(2,n)$. We wish to give a combinatorial and geometric object that parameterizes the positive tropical points of $\Gr(k,n)$ in general.

We first consider the space $\Conf_n \A$, the space of configurations of $n$ principal flags for $SL_k$. The positive tropical part of $\Conf_n \A$ parameterizes \emph{higher laminations}, defined in \cite{GS}, \cite{Le} (we will give a detailed definition later). We can roughly define higher laminations as certain configurations of points in the affine building for $SL_k$. The affine building is a simplicial complex of dimension $k-1$ equipped with a vector-valued distance function. Tropical functions on $\Conf_n \A$ have an interpretation in terms of the metric geometry of the building. In the case where $k=SL_2$, $\Conf_n \A = \Gr(2,n)$, and the affine building is an infinite tree. Then higher laminations will be metric trees, recovering the results of \cite{SS}.

While this at first seems to be a generalization in a somewhat different direction, we can use this to describe a tropicalization of $\Gr(k,n)$. There is a natural map
$$\Conf_n \A \rightarrow \tGr(k,n).$$
(Here, $\tGr(k,n)$ is the affine cone over $\Gr(k,n)$.) This map turns out to respect positive tropical points, and is surjective on these points. This realizes the positive tropical points of $\tGr(k,n)$ as certain equivalence classes of higher laminations, which we will call \emph{horocycle laminations} (see Definition \ref{horolam}). A horocycle lamination can be thought of as a configuration of horocycles in the affine building. Choosing any point from each of these horocycles gives a configuration in the affine building, and hence a higher lamination. We again will have a metric interpretation of the Pl\"ucker coordinates in terms of metric geometry.

It turns out the horocycle laminations have distinguished higher laminations as representatives; in other words, the horocycles in the building have distinguished points. This gives a different interpretation of the positive tropical points of $\tGr(k,n)$. This interpretation is in a sense dual to the previous one, and arises from some considerations having to do with the Duality Conjectures of \cite{FG1}.

These considerations allow us to deduce a version of the duality conjectures for $\tGr(k,n)$ from the duality conjectures for $\Conf_n \A$. We thus find that certain tropical points of $\tGr(k,n)$ parameterize a basis of functions in $\cO(\tGr(k,n))$. We conjecture a precise relationship between this parameterization and the one given in \cite{RW}. Both these parameterizations have roots in mirror symmetry.

We expect that there is an alternative way to prove these results using the existence of the maximal green sequence/DT transformation for $\tGr(k,n)$ \cite{MS}. The existence of the DT transformation along with the work of \cite{GHKK} allows one to deduce the duality conjectures for the cluster variety associated to  $\tGr(k,n)$. One would further need to analyze the potential that comes from viewing $\tGr(k,n)$ as a partial compactification of the cluster variety in order to get a parameterization of functions on  $\tGr(k,n)$ in terms of polytopes. We do not believe this has been done. In any case, our approach describes these polytopes fairly explicitly.


One of our main tools will be the idea of a \emph{cluster fibration}, which we hope may be of interest in other contexts. In particular, we show that the map from $\Conf_n \A$ to $\tGr(k,n)$ is a cluster fibration.

In Section 2, we give some background on cluster algebras and tropicalization. Section 3 analyzes the relationship between the spaces $\Conf_n \A$ and $\tGr(k,n)$ in the context of positive tropical geometry. In Section 4, we recall the definition and relevant facts about higher laminations. Sections 5 and 6 give the dual realizations of the positive tropical points of $\tGr(k,n)$ as horocycle laminations and their distinguished representatives. The most nontrivial results are in Section~6, which analyzes the map between the tropical points of $\Conf_n \A$ and $\tGr(k,n)$ in terms of convex geometry. Section 7 gives an application to the duality conjectures and relates them to \cite{RW}. Section 8 discusses the cluster $\X$-variety for the Grassmannian. 

\section{Background}
\subsection{Cluster algebras, positive structures and tropicalization}

We will not use much from the theory of cluster algebras in this paper. We mostly use some very soft facts about cluster algebras. We give a quick review of cluster algebras in the appendix. Here, let us summarize the handful of facts that we need.

Cluster algebras are commutative rings that come equipped with a collection of distinguished generators, together with specific groupings of these generators into \emph{clusters}.  Each generator is called a \emph{cluster variable}. Each cluster algebra has a set of variables called \emph{frozen variables} which belong to every cluster. The rest of the variables are the \emph{non-frozen variables}. Each cluster also has an associated skew-symmetric $B$-matrix, which can be encoded by a quiver.

There is a procedure called \emph{mutation} which modifies the cluster in the following way: all the cluster variables in the cluster remain the same except for one, which is mutated, meaning that it is replaced by a rational expression in the original cluster variables. The particular rational expression depends on the $B$-matrix, and a formula can be found in the appendix. The $B$-matrix likewise is modified according to the {\it matrix mutation rule} given in the appendix. The salient point for us is that this rational expression is a \emph{positive} rational expression, meaning that it is subtraction-free. We will see that a cluster structure on a variety induces a positive structure on that variety.

Let us now define the notion of a positive structure on an algebraic variety. We will then explain how the cluster structure on $X$ gives a positive structure on $X$.
Let $Y$ be any algebraic variety. We say that $Y$ has a {\it positive atlas} if it has an atlas of coordinate charts such that all transition functions involve only addition, multiplication and division. Each of the coordinate charts in the positive atlas is called a \emph{positive chart}. When $Y$ has a positive atlas, we say that $Y$ is a {\it positive variety} or that $Y$ has a \emph{positive structure}. Many varieties in representation theory--for example, partial flag varieties, configuration spaces and moduli of local systems--have natural positive structures coming from Lusztig's theory of total positivity. In many cases, this structure comes from a cluster structure on $Y$.

This is the case for the varieties that are of central interest for us: both $\tGr(k,n)$ and $\Conf_n \A$ have cluster structures that induce positive structures on these varieties.

If $Y$ is a positive variety, then a function $f \in \cO(Y)$ is called a {\it positive rational function} if it can be written as a positive rational function of the coordinates in some (and hence in any) positive chart.

If $Y$ is a positive variety we can take points of $Y$ with values in any {\it semifield}, i.e. in any set equipped with operations of addition, multiplication and division, such that these operations satisfy their usual properties (the most important and non-trivial being distributivity). For us, the important examples of semifields will be the positive real numbers $\R_{>0}$; any tropical semifield; and the semifield which interpolates between these two: the field of formal Laurent series over $\R$ with positive leading coefficient, which we denote $\cK_{>0}$. The tropical semifields $\Z^t, \Q^t, \R^t$ are obtained from $\Z, \Q, \R$ by replacing the operations of multiplication, division and addition by the operations of addition, subtraction and taking the maximum, respectively.

The set $Y(\Z^t)$ (respectively, $Y(\Q^t)$, $Y(\R^t)$) is what is commonly called the set of integral (respectively, rational, real) \emph{positive tropical points} of $Y$. It is a subset of the tropical variety associated to $Y$, and can be considered as the positive part of the tropical variety. The reason for this comes from the following alternative description of $Y(\Z^t)$.

Let $x \in Y(\cK_{>0})$. Then there is a corresponding tropical point $x^t$ of the space $Y(\Zt)$. This point $x^t$ is characterized by the property that if $f$ is one of the positive coordinates of a positive chart (or more generally any positive rational function), then $$f(x^t)=-\val f(x).$$
One can check that under any change of positive coordinate charts, because all transition maps are subtraction free, the expression for $-\val f(x)$ transforms tropically. 

Thus we get a map $$-\val:  Y(\cK_{>0}) \rightarrow Y(\Zt).$$ The map is surjective: in any coordinate chart, we may specify the valuations of coordinates of a point of  $Y(\cK_{>0})$ as we wish. Because all transition functions between charts are invertible, specifying the coordinates in one chart is the same as specifying the coordinates in every chart.

Our main goal will to give an interpretation of the space $X(\Z^t)$ where $X=\tGr(k,n)$.

\subsection{The Grassmannian and its cluster structure}
Let $\Gr(k,n)$ be the Grassmannian of $k$-planes in $\C^n$. We will be mostly concerned with the affine cone $X=\tGr(k,n) \rightarrow \Gr(k,n)$ over the Grassmannian. The affine cone $X$ contains a distinguished point known as the {\it cone point}. Away from this point, $X$ is a $\C^*$ bundle over the Grassmannian. We will give some reminders about $X$.

Every point in $X$ (besides the cone point) can be described by a full-rank $k \times n$ matrix $A$, considered modulo left multiplication by $k\times k$ matrices of determinant $1$. The row span of $A$ is a $k$-dimensional subspace of $\C^n$, i.e., a point in $\Gr(k,n)$, and this defines the map $X \to \Gr(k,n)$. In particular, the Grassmannian $\Gr(k,n)$ can be identified with full rank $k \times n$ matrices modulo left multiplication by $\GL_k$. 

We recall the construction of Pl\"ucker coordinates on $X$. We denote by $\binom{[n]}{k}$ the set of all $k$-element subsets of $[n]:=\{1,\dots,n\}$. Let $A$ be a $k \times n$ matrix representing a point in $X$. For $J\in \binom{[n]}{k}$, the Pl\"ucker coordinate $P_J(A)$ is the minor of $A$ consisting of the columns indexed by $J$. The function $P_J(A)$ is a well-defined function on $X$. Furthermore, the map $A \mapsto (P_J(A))$, where $J$ ranges over $\binom{[n]}{k}$, descends to a projective embedding (the {\it Pl\"ucker embedding}) 
$$\Gr(k,n) \hookrightarrow \mathbb{P}^{\binom{n}{k}-1}.$$

When there is no possibility of confusion, we will usually abbreviate the function $P_J$: for example, instead of $P_{1457}$, we will write $1457$. We will also sometimes write $P(1, 4, 5, 7)$ instead of $P_{1457}$.

By considering the columns of $A$, rather than its rows, a point in $X$ can alternatively be viewed as an $n$-tuple of vectors in $k$-dimensional space. More precisely, $X$ is the geometric invariant theory quotient $\SL_k \backslash \backslash (\C^k)^n$ of this space by $\SL_k$ action. Throughout this paper we denote by $\Conf_n V$ the configuration space of $n$-tuples of vectors in $V$, in other words the naive quotient $\SL_k \backslash V^n$.  At the level of functions one has $\cO(X)=\cO(\Conf_n V)$, and for our purposes it will suffice to think of $\Conf_n V$ and $X$ as being one and the same.

The space $X$ has a cluster structure, discovered by Scott \cite{S}. This cluster structure induces a positive structure that has been well-studied \cite{P}. Figure~\ref{Gr48Seed} depicts a cluster in the cluster structure for $\Gr(4,8)$. At each vertex of the quiver we put one of the cluster variables.

\begin{figure}
\begin{tikzpicture}[scale=2]
\begin{scope}[xshift=-1cm]
  \node[] (x00) at (0,0) {$1238$};
  \node[] (x10) at (1,0) {$1237$};
  \node[] (x20) at (2,0) {$1236$};
  \node[] (x30) at (3,0) {$1235$};
  \node[] (x40) at (4,0) {$1234$};

  \node[] (x01) at (0,-1) {$1278$};
  \node[] (x11) at (1,-1) {$1267$};
  \node[] (x21) at (2,-1) {$1256$};
  \node[] (x31) at (3,-1) {$1245$};

  \node[] (x02) at (0,-2) {$1678$};
  \node[] (x12) at (1,-2) {$1567$};
  \node[] (x22) at (2,-2) {$1456$};
  \node[] (x32) at (3,-2) {$1345$};

  \node[] (x03) at (0,-3) {$5678$};
  \node[] (x13) at (1,-3) {$4567$};
  \node[] (x23) at (2,-3) {$3456$};
  \node[] (x33) at (3,-3) {$2345$};

  \draw [->] (x40) to (x30);
  \draw [->] (x30) to (x20);
  \draw [->] (x20) to (x10);
  \draw [->] (x10) to (x00);
  \draw [->] (x31) to (x21);
  \draw [->] (x21) to (x11);
  \draw [->] (x11) to (x01);
  \draw [->] (x32) to (x22);
  \draw [->] (x22) to (x12);
  \draw [->] (x12) to (x02);

  \draw [->] (x02) to (x03);
  \draw [->] (x01) to (x02);
  \draw [->] (x00) to (x01);
  \draw [->] (x12) to (x13);
  \draw [->] (x11) to (x12);
  \draw [->] (x10) to (x11);
  \draw [->] (x22) to (x23);
  \draw [->] (x21) to (x22);
  \draw [->] (x20) to (x21);
  \draw [->] (x32) to (x33);
  \draw [->] (x31) to (x32);
  \draw [->] (x30) to (x31);

  \draw [->] (x03) to (x12);
  \draw [->] (x02) to (x11);
  \draw [->] (x01) to (x10);
  \draw [->] (x13) to (x22);
  \draw [->] (x12) to (x21);
  \draw [->] (x11) to (x20);
  \draw [->] (x23) to (x32);
  \draw [->] (x22) to (x31);
  \draw [->] (x21) to (x30);
  \draw [->] (x33) to (x40);

\end{scope}

\end{tikzpicture}
\caption{\label{Gr48Seed} The cluster structure for $\tGr(4,8)$.}
\end{figure}

Note that in the above cluster, all the the cluster variables are Pl\"ucker coordinates. Moreover, the frozen variables are the Pl\"ucker coordinates of the form $i, i+1, i+2, i+3$ where indices are taken mod $8$. It should be clear how to generalize this construction to obtain a cluster for $\Gr(k,n)$.

We use the following facts about the cluster structure on the Grassmannian:
\begin{itemize}
\item The frozen cluster variables are the Pl\"ucker coordinates of the form $i, i+2, \dots, i+k-1$ where indices are taken$\mod n$.
\item There is a cluster for $X$ consisting of all Pl\"ucker coordinates of the form $1, 2, 3, \dots, a, i, i+1, \dots, i+b-1$ where $a+b=k$, $a, b \geq 0$, and $i > a$. Note that when $a=0$, we get the frozen Pl\"ucker coordinates $i, i+1, \dots, i+k-1$. (There are many other clusters that consist only of Pl\"ucker coordinates.)
\item There is a \emph{twisted cyclic shift} map $\Conf_n V \xrightarrow{\rho} \Conf_n V$ that sends
$$(v_1, v_2, \dots, v_n) \rightarrow (v_2, \dots, v_n, v_1)$$
when $k$ is odd, and when $k$ is even sends
$$(v_1, v_2, \dots, v_n) \rightarrow (v_2, \dots, v_n, -v_1).$$
This is an {\it automorphism} of the cluster structure: pulling back the functions in any cluster by $\rho$ produces another set of functions forming a cluster.
\end{itemize}

\subsection{Configuration spaces of flags and their cluster structure}\label{flags}

Let $G=\SL_k=\SL(V)$, where as before $V=\C^k$. Moreover, let $B$ be a Borel subgroup of $G$, and let $U := [B,B]$ be a maximal unipotent subgroup of $G$. Then $\A := G/U$ is the {\it principal flag variety}. A point in $\A$ corresponds to a \emph{principal affine flag}. In concrete terms, a principal affine flag can be described by giving an ordered set of $k$ basis vectors $v_1, \dots, v_k$. A set of $k$ vectors determines a flag by considering the $i$-dimensional subspaces spanned by $v_1, \dots , v_i$ for $i \leq k$. These $i$-dimensional subspaces have natural volume forms $$v_1 \wedge \dots \wedge v_i$$ for $k=1, 2, \dots, k-1$. Because we are considering $\SL_k$-flags, we will require that $$v_1 \wedge \dots \wedge v_k$$ is the standard volume form. Two sets of basis vectors determine the same principal affine flag if they give the same $i$-forms $v_1 \wedge \dots \wedge v_i$ for $i \leq k$. We will sometimes call these objects \emph{principal flags}, or merely flags, for short.

We define the configuration space of $n$ principal flags $\Conf_n \A$ as the quotient of $\A^n$ by the diagonal action of $G$:
$$\Conf_n \A := G \backslash \A^n.$$

We now describe the cluster structure on the space $\Conf_n \A$. First we define some functions on $\Conf_3 \A$. 

Suppose that we have a point in $\Conf_3 \A$ given by three flags $A, B, C$, which are represented by $u_1, \dots, u_n$, $v_1, \dots, v_n$ and $w_1, \dots, w_n$ respectively. Fock and Goncharov define a canonical function $f_{i_1i_2i_3}$ of this triple of flags for every triple of non-negative integers $i_1, i_2, i_3$ such that $i_1+i_2+i_3=k$ and $i_1, i_2, i_3 < k$. It is defined by 
$$f_{i_1i_2i_3}(A, B, C)=\det(u_1, u_2, \dots, u_{i_1}, v_1, v_2, \dots v_{i_2}, w_1, w_2, \dots, w_{i_3}),$$ 
and it is $G$-invariant by definition. We will sometimes abbreviate the function $f_{i_1i_2i_3}(A, B, C)$ by $A^{i_1}B^{i_2}C^{i_3}$.

Let us let the flags $A, B, C$ label the vertices of a triangle. Each of the above functions is associated to either an edge of this triangle or its interior. When one of $i_1, i_2, i_3$ is $0$, the function $f_{i_1i_2i_3}$ only depends on two of the flags. We can call such functions {\em edge} functions. Moreover, the functions which depend only on, say, $A$ and $B$, we associate to the edge $AB$. We will call the remaining functions {\em face} functions, and associate them to the interior of the triangle. 
Now we can describe the cluster structure on $\Conf_3 \A$. We do this by the example. Below is a picture of the functions and the quiver for $\Conf_3 \A$ when $G=SL_5$. We can imagine the flags $A, B, C$ at the three vertices (top, bottom and right) of the triangle containing the quiver, so that the edge functions are on the edges of the triangle, and the face functions are in the interior of the triangle: 

\begin{figure}
\begin{tikzpicture}[scale=2]

  \node (x410) at (-2,0) {$A^4B^1$};
  \node (x320) at (-2,-1) {$A^3B^2$};
  \node (x230) at (-2,-2) {$A^2B^3$};
  \node (x140) at (-2,-3) {$A^1B^4$};

  \node (x401) at (-1,0.5) {$A^4C^1$};
  \node (x311) at (-1,-0.5) {$A^3B^1C^1$};
  \node (x221) at (-1,-1.5) {$A^2B^2C^1$};
  \node (x131) at (-1,-2.5) {$A^1B^3C^1$};
  \node (x041) at (-1,-3.5) {$B^4C^1$};

  \node (x302) at (0,0) {$A^3C^2$};
  \node (x212) at (0,-1) {$A^2B^1C^2$};
  \node (x122) at (0,-2) {$A^1B^2C^2$};
  \node (x032) at (0,-3) {$B^3C^2$};

  \node (x203) at (1,-0.5) {$A^2C^3$};
  \node (x113) at (1,-1.5) {$A^1B^1C^3$};
  \node (x023) at (1,-2.5) {$B^2C^3$};

  \node (x104) at (2,-1) {$A^1C^4$};
  \node (x014) at (2,-2) {$B^1C^4$};


  \draw [->] (x014) to (x104);
  \draw [->] (x023) to (x113);
  \draw [->] (x113) to (x203);
  \draw [->] (x032) to (x122);
  \draw [->] (x122) to (x212);
  \draw [->] (x212) to (x302);
  \draw [->] (x041) to (x131);
  \draw [->] (x131) to (x221);
  \draw [->] (x221) to (x311);
  \draw [->] (x311) to (x401);
  \draw [->, dashed] (x140) to (x230);
  \draw [->, dashed] (x230) to (x320);
  \draw [->, dashed] (x320) to (x410);

  \draw [->] (x401) to (x410);
  \draw [->] (x302) to (x311);
  \draw [->] (x311) to (x320);
  \draw [->] (x203) to (x212);
  \draw [->] (x212) to (x221);
  \draw [->] (x221) to (x230);
  \draw [->] (x104) to (x113);
  \draw [->] (x113) to (x122);
  \draw [->] (x122) to (x131);
  \draw [->] (x131) to (x140);
  \draw [->, dashed] (x014) to (x023);
  \draw [->, dashed] (x023) to (x032);
  \draw [->, dashed] (x032) to (x041);

  \draw [->] (x140) to (x041);
  \draw [->] (x230) to (x131);
  \draw [->] (x131) to (x032);
  \draw [->] (x320) to (x221);
  \draw [->] (x221) to (x122);
  \draw [->] (x122) to (x023);
  \draw [->] (x410) to (x311);
  \draw [->] (x311) to (x212);
  \draw [->] (x212) to (x113);
  \draw [->] (x113) to (x014);
  \draw [->, dashed] (x401) to (x302);
  \draw [->, dashed] (x302) to (x203);
  \draw [->, dashed] (x203) to (x104);

\end{tikzpicture}
\caption{\label{fig:Sl5seed} A cluster and quiver for $\Conf_3 \mathcal{A}$ when $G = \SL_5$.}
\end{figure}
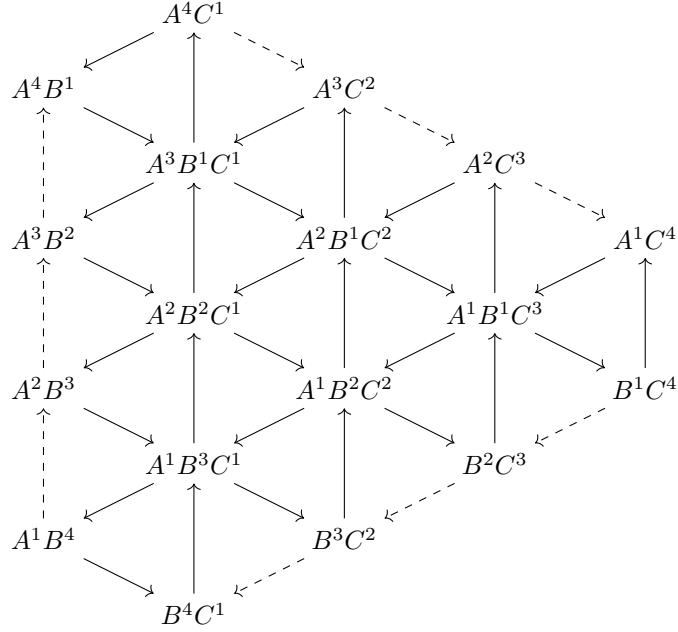

This now easily extends to $\Conf_n \A$. Suppose we have a configuration of $n$ flags. The $n$ flags come as an ordered set of flags, but what is most important is their cyclic order. We can imagine the flags sitting at the vertices of an $n$-gon. To get the cluster structure, triangulate the $m$-gon. The entire quiver is glued out of the quivers for each of the $n-2$ triangles in the triangulation. Each triangle in the triangulation has three associated flags. We place the quiver for $\Conf_3 \A$ and the corresponding functions inside each triangle. 

When an edge belongs to two adjacent triangles, the functions along this edge in each triangle are the same, and when we glue the triangles together, we identify the edge functions. The rule for forming the quiver is that dotted arrows count as a half-arrow, so that two dotted arrows in the same direction glue to give us a solid arrow, whereas two dotted arrows in the opposite direction cancel

The frozen variables will be the functions associated to the edges at the boundary of the $n$-gon. This completely describes the cluster structure on $\Conf_n \A$.

The main fact we will need is the following:

\begin{theorem} (\cite{FG1}) For any triangulation, the set of all edge and face functions for that triangulation forms a cluster. Different triangulations yield different clusters that can be related by a sequence of mutations. In particular, the these clusters give us positive coordinate charts whose transition functions are given by positive rational transformations (transformations involving only addition, multiplication and division).
\end{theorem}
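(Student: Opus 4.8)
The plan is to establish the three assertions of the theorem in order: that edge and face functions for a fixed triangulation form a cluster, that distinct triangulations yield mutation-equivalent clusters, and that consequently the transition functions are subtraction-free. The key idea is to reduce everything to the local picture of a single \emph{flip} of a triangulation, since any two triangulations of an $n$-gon are connected by a sequence of such flips (a classical fact about the associahedron / flip graph). So the heart of the matter is a single square: two triangles $ABD$ and $BCD$ sharing the diagonal $BD$, versus the two triangles $ABC$ and $ACD$ sharing the diagonal $AC$.

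First I would set up notation carefully. For the triangulation $T$ with diagonal $BD$, the cluster $\Sigma_T$ consists of: the frozen boundary-edge functions (unchanged under any flip), the edge functions on the internal edge $BD$, namely the $f_{i_1 i_2 0}(B,D,\cdot)$-type functions $B^a D^{k-a}$ for $0<a<k$, the face functions of triangle $ABD$, and the face functions of triangle $BCD$; all glued according to the dotted-arrow/half-arrow rule described above. For the flipped triangulation $T'$, replace the $BD$ edge functions and the two triangles' face functions by the $AC$ edge functions $A^a C^{k-a}$ and the face functions of $ABC$ and $ACD$. The claim is that $\Sigma_{T'}$ is obtained from $\Sigma_T$ by a specific, explicitly describable sequence of mutations supported entirely on the non-frozen variables living in the quadrilateral $ABCD$ — in particular the mutations are local, they do not touch anything outside the square, so gluing is compatible and it suffices to verify the statement for $n=4$, i.e. for $\Conf_4 \A$.

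Thus the core computation is: for $\Conf_4 \A$ with $G=\SL_k$, exhibit the explicit mutation sequence relating the ``diagonal $BD$'' seed to the ``diagonal $AC$'' seed, and check that at each step the exchange relation is the one predicted by the $B$-matrix. For small $k$ this can be drawn explicitly (for $k=2$ it is a single mutation — the Ptolemy relation $P_{13}P_{24} = P_{12}P_{34} + P_{14}P_{23}$ — and for $k=3,4,5$ one has the ladder-type mutation sequences familiar from Fock--Goncharov's work on $\Conf_m\A$ for $\SL_k$). In general this is exactly the content of the decorated-flip / ``cluster transformation for a flip'' analysis in \cite{FG1}, so strictly speaking I may simply cite it; but to keep the paper self-contained I would at least (a) recall that each internal edge function $B^a D^{k-a}$ appears as a cluster variable that gets mutated, (b) note that the face functions of $ABD\cup BCD$ and of $ABC\cup ACD$ are related through intermediate cluster variables appearing along the way, and (c) confirm that after the sequence, the quiver has transformed into the glued quiver of the new triangulation. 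The third assertion of the theorem — positivity of transition functions — is then automatic: a single mutation replaces a variable by a subtraction-free Laurent expression in the others (the mutation formula recalled in the appendix is manifestly subtraction-free), compositions of subtraction-free rational maps are subtraction-free, and passing between two arbitrary triangulations is a finite composition of flips, hence a finite composition of mutations.

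The main obstacle I anticipate is purely bookkeeping rather than conceptual: writing down the flip mutation sequence for general $k$ in a way that is both correct and readable, and verifying that the glued quiver really does transform as claimed (the dotted/half-arrow gluing rule interacts with the mutation rule at the edge vertices, and one must check that arrows to frozen boundary functions and to face functions of neighboring triangles behave correctly — i.e. that the mutations are genuinely ``interior'' to the square and commute with everything outside it). Since this verification, and indeed the entire statement, is established in \cite{FG1}, the cleanest route is to state the theorem as a recollection, give the $n=4$ reduction and the $k=2$ Ptolemy case as illustration of the mechanism, and refer to \cite{FG1} for the general decorated-flip computation. I would flag explicitly that the one genuinely nontrivial input being imported is the compatibility of these flip mutations with the cluster structure (equivalently, that the $B$-matrix attached to each triangulated seed mutates under a flip exactly as the combinatorics of the $n$-gon predicts), everything else being formal.
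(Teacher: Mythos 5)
The paper does not prove this theorem; it is stated purely as a recollection with a citation to \cite{FG1}, which is exactly the route you yourself identify as cleanest. Your sketch of the underlying argument---reduction to local flips of the $n$-gon triangulation, the observation that a flip is supported on a single quadrilateral so $n=4$ suffices, the $k=2$ Ptolemy relation as prototype, and subtraction-freeness of transition maps following formally from the mutation rule---is a correct and faithful summary of the Fock--Goncharov proof, so there is nothing in the paper to compare it against and no gap to flag.
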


There is also a twisted cyclic shift map for $\Conf_n \A$:
\begin{align*}
\tau: \Conf_n \A &\rightarrow \Conf_n \A \\
\tau(F_1, F_2, \dots F_n) &= (F_2, \dots, F_n, s_GF_1)
\end{align*}
where $s_G=I$, the identity map, when $k$ is odd and $s_G=-I$ when $k$ is even. Again, it follows from \cite{FG1} that the twisted cyclic shift of any cluster is also a cluster.

\section{Natural maps between flag and vector configuration spaces}\label{natural}

In this section, we discuss various relationships between the spaces $\Conf_m \A$ and $\Conf_n V$ and their cluster/positive structures.

The most obvious relationship is that there is a forgetful map

$$\pi: \Conf_n \A \rightarrow \Conf_n V$$
which sends each flag to the vector it contains. Clearly it is a fibration. However, it does more: it respects the positive structures of these spaces, and in some sense their cluster structures as well. We will encapsulate the good properties of the map $\pi$ by saying that it is a \emph{positive fibration}. For the purposes of this paper, let us define this notion as follows.

\begin{definition} Let $Y_1$ and $Y_2$ be two positive varieties. We will say that the map $\pi: Y_1 \rightarrow Y_2$ is a positive fibration if it satisfies the following properties:
\begin{enumerate}
\item $\pi$ respects positive structures: if $x \in Y_1$ is a positive point, then $\pi(x) \in Y_2$ is a positive point. Alternatively, if $f \in \cO(Y_2)$ is a positive rational function, then $\pi^*(f) \in \cO(Y_1)$ is a positive rational function.
\item There exists a positive chart for $Y_2$ consisting of functions $\{f_i\}$ such that the set of functions $\{\pi^*(f_i)\}$ can be extended to a positive chart for $Y_1$.
\end{enumerate}
\end{definition}

We use this definition tentatively, because we are not sure sure if it is the ``right'' notion. It is possible that the last condition is too strong, or could be replaced by an equivalent condition which is easier to verify. We hope that in the future others will become interested in this notion and possibly refine its definition.

In the examples that will be of interest to us, positive fibrations come from maps which are \emph{cluster fibrations}.

\begin{definition} Let $Y_1$ and $Y_2$ be varieties which admit cluster structures. We will say that the map $\pi: Y_1 \rightarrow Y_2$ is a cluster fibration if there exists a cluster for $Y_1$ consisting of functions $\{f_i\}$ and a cluster for $Y_2$ consisting of functions $\{g_i\}$ such that $\{\pi^*(g_i)\} \subset \{f_i\}$. In other words, the pull-back of a cluster on $Y_2$ is contained within a cluster for $Y_1$.
\end{definition}

Here, too, it is possible that one could give a more refined definition. For example, we could further require compatibility between the quivers for these clusters, i.e., the $B$-matrix for the cluster on $Y_2$ is the corresponding submatrix of the $B$-matrix for the cluster on $Y_1$. This will be the case for all the examples that we consider. Many examples of cluster fibrations occur from the process of amalgamation, which is used to describe cluster structures on various Bruhat cells as well as on moduli of decorated local systems \cite{BFZ}, \cite{FG1}, \cite{Le2}.

It is straightforward to verify the following:

\begin{prop} Let $Y_1$ and $Y_2$ be varieties which admit cluster structures. These cluster structures determine positive structures on $Y_1$ and $Y_2$. If $\pi: Y_1 \rightarrow Y_2$ is a cluster fibration, then it is a positive fibration.
\end{prop}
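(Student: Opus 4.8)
The plan is to verify the two defining conditions of a positive fibration directly from the hypothesis that $\pi$ is a cluster fibration, using only the soft facts about cluster algebras recalled in Section~2. Recall that a cluster structure on a variety $Y$ determines a positive atlas: each cluster gives a coordinate chart (the cluster variables serve as coordinates), and the transition functions between the charts associated to two clusters are compositions of mutations, hence subtraction-free rational maps. So the positive structures on $Y_1$ and $Y_2$ referenced in the statement are exactly the ones built from their respective collections of clusters.

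First I would fix, by the cluster fibration hypothesis, a cluster $\{g_i\}$ for $Y_2$ and a cluster $\{f_j\}$ for $Y_1$ with $\{\pi^*(g_i)\} \subseteq \{f_j\}$. Condition (2) of a positive fibration is then immediate: the chart for $Y_2$ given by $\{g_i\}$ pulls back to $\{\pi^*(g_i)\}$, which by hypothesis sits inside the cluster $\{f_j\}$, i.e. inside a positive chart for $Y_1$; that is literally the assertion that $\{\pi^*(f_i)\}$ extends to a positive chart for $Y_1$.

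For condition (1), I would argue that $\pi$ respects positive structures. Take a positive rational function $h \in \cO(Y_2)$. By definition of the positive structure coming from the cluster atlas, $h$ is a subtraction-free rational expression in the cluster variables $\{g_i\}$ of the fixed cluster. Pulling back, $\pi^*(h)$ is the same subtraction-free rational expression in the functions $\{\pi^*(g_i)\}$. Since each $\pi^*(g_i)$ is one of the cluster variables $f_j$, the function $\pi^*(h)$ is a subtraction-free rational expression in the $\{f_j\}$, hence a positive rational function on $Y_1$ with respect to that chart; and since subtraction-freeness of an expression is preserved under all the mutation transition maps, it is positive in every chart. Equivalently, if $x \in Y_1$ is a positive point — meaning all cluster variables in some (hence every) cluster take values in the chosen semifield — then the coordinates $g_i(\pi(x)) = \pi^*(g_i)(x)$ are among the values $f_j(x)$, so they lie in the semifield, and thus $\pi(x)$ is a positive point of $Y_2$.

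There is essentially no hard step here; the proposition is a bookkeeping consequence of the fact that the positive atlas attached to a cluster structure uses cluster variables as coordinates and mutations (which are subtraction-free) as transition maps. The only point requiring a modicum of care is checking that positivity is chart-independent on both sides — that a function subtraction-free in one cluster's variables is subtraction-free in another's — but this is exactly the standard fact about mutations being subtraction-free rational maps, already invoked in Section~2, and it is what lets us pass freely between ``positive in the chosen chart'' and ``positive rational function.'' I would also remark that if one adopts the refined definition with compatible $B$-matrices, nothing in the argument changes, since that refinement only constrains which pair of clusters one picks, not the positivity bookkeeping.
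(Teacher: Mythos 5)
Your proof is correct, and the paper in fact offers no proof at all — it simply states that the proposition is ``straightforward to verify,'' so there is no paper argument to compare against. What you wrote is exactly the natural verification: condition~(2) of a positive fibration is a restatement of the cluster fibration hypothesis once one recalls that each cluster furnishes a positive chart, and condition~(1) follows by substituting the identifications $\pi^*(g_i) = f_{j(i)}$ into a subtraction-free expression and then invoking chart-independence of positivity via the subtraction-freeness of mutation. One small typo to fix: near the end of your second paragraph you write ``$\{\pi^*(f_i)\}$ extends to a positive chart'' where you mean $\{\pi^*(g_i)\}$.
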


Let us first give some examples of maps that are positive fibrations in our sense.

\begin{prop}\label{posfib} The following are cluster fibrations, and hence positive fibrations:
\begin{enumerate}
\item The natural maps 
$$\pi_i: \Conf_n V \rightarrow \Conf_{n-1} V$$
for $1 \leq i \leq n$ coming from forgeting one of the $n$ vectors.
\item The natural maps 
$$\pi_i: \Conf_n \A \rightarrow \Conf_{n-1} \A$$
for $1 \leq i \leq n$ coming from forgeting one of the $n$ flags.
item
\item The map
$$\pi: \Conf_n \A \rightarrow \Conf_n V$$
sending each flag to the first vector in the flag.
\end{enumerate}
\end{prop}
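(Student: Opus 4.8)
The plan is to exhibit, for each of the three maps, an explicit cluster on the source and an explicit cluster on the target so that the pullbacks of the target cluster variables sit inside the source cluster; then invoke the preceding Proposition to conclude that each map is a positive fibration. For part (1), I would use the Scott cluster structure on $\Conf_n V = \tGr(k,n)$ described in Section 2.2. Forgetting the vector $v_i$ corresponds, after a twisted cyclic shift (which is a cluster automorphism, as recalled above), to forgetting the last vector $v_n$, so it suffices to treat $\pi_n$. Here I would take on $\Conf_{n-1} V$ the cluster whose Pl\"ucker coordinates are all of the form $1,2,\dots,a,\,j,j+1,\dots,j+b-1$ with $a+b=k$ and $j>a$ (using indices in $[n-1]$), and observe that every such Pl\"ucker coordinate, pulled back along $\pi_n$, is literally the same Pl\"ucker coordinate on $\Conf_n V$, which appears in the analogous cluster for $\Gr(k,n)$ listed in Section 2.2. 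One must check that this collection of Pl\"ucker coordinates for $\Gr(k,n-1)$ is a subset of the corresponding cluster for $\Gr(k,n)$ (e.g.\ the ``rectangles'' cluster of Figure~\ref{Gr48Seed}), and that the extra cluster variables in the $\Gr(k,n)$ cluster are exactly those Pl\"ucker coordinates involving the index $n$; this is a direct inspection of the combinatorics of the standard Grassmannian seed.

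For part (2), I would use the triangulation description of the cluster structure on $\Conf_n \A$ from Section~\ref{flags}. Again, up to the twisted cyclic shift $\tau$ (a cluster automorphism by \cite{FG1}), it suffices to forget the last flag $F_n$. Choose a triangulation $T$ of the $n$-gon that contains the ``ear'' triangle on vertices $n-1, n, 1$; then $T$ restricts to a triangulation $T'$ of the $(n-1)$-gon on vertices $1,\dots,n-1$. The cluster attached to $T'$ consists of edge and face functions of triangles all of whose vertices lie in $\{1,\dots,n-1\}$, and each such function is manifestly the pullback along $\pi_n$ of itself (it does not involve $F_n$). These functions are precisely the functions of $T$ coming from triangles not meeting the vertex $n$, hence form a subset of the $T$-cluster on $\Conf_n \A$; the additional cluster variables are the face functions of the ear triangle and the edge functions on the edges $\{n-1,n\}$ and $\{n,1\}$. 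So the pulled-back $T'$-cluster is contained in the $T$-cluster, as required.

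For part (3), I use the map $\pi : \Conf_n \A \to \Conf_n V$ sending each flag to its first vector. Fix a triangulation $T$ of the $n$-gon. On the $\Conf_n \A$ side I take the $T$-cluster of all edge and face functions. On the $\Conf_n V$ side I take the Pl\"ucker cluster associated to the same triangulation: for each triangle with vertices $a<b<c$ and each $(i_1,i_2,i_3)$ with $i_1+i_2+i_3=k$, the relevant Pl\"ucker coordinate is $P_J$ where $J$ consists of $i_1$ copies of index $a$, $i_2$ of $b$, $i_3$ of $c$ — more precisely one checks (this is the standard dictionary between triangulated-polygon clusters for $\Gr(k,n)$ and Fock--Goncharov functions) that $\pi^* P_J = f_{i_1 i_2 i_3}(F_a, F_b, F_c)$ up to a monomial in frozen variables, because evaluating a Pl\"ucker minor on the first vectors $v_1(F_a),\dots$ reproduces the determinant $\det(\dots)$ defining $f_{i_1i_2i_3}$. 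Matching these up shows the pullback of the Grassmannian cluster lands inside the $\Conf_n\A$ cluster.

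The main obstacle I anticipate is part (3): verifying carefully that the standard cluster on $\tGr(k,n)$ attached to a triangulation pulls back, under ``take the first vector of each flag,'' exactly onto a subset of the Fock--Goncharov edge-and-face cluster — getting the normalizations right (volume forms, the $\SL_k$ vs.\ $\GL_k$ quotient, possible frozen-monomial discrepancies) and confirming that the quivers match as sub-quivers. Parts (1) and (2) are essentially bookkeeping with the combinatorics of Grassmannian seeds and polygon triangulations respectively, using the twisted cyclic shift to reduce to forgetting the last object, and should go through routinely once the right clusters are named.
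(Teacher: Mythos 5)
Parts (1) and (2) of your proposal match the paper's proof essentially verbatim: exhibit a standard cluster on the source containing the pullback of a standard cluster on the target (the ``rectangles'' cluster for the Grassmannian, a triangulation-compatible cluster for $\Conf_n\A$), and reduce from $\pi_i$ to $\pi_n$ via the twisted cyclic shift automorphism. Those parts are fine.

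Part (3) has a real gap. You write that for a triangle $a<b<c$ in the triangulation and $i_1+i_2+i_3=k$, ``the relevant Pl\"ucker coordinate is $P_J$ where $J$ consists of $i_1$ copies of index $a$, $i_2$ of $b$, $i_3$ of $c$,'' and that $\pi^*P_J = f_{i_1i_2i_3}(F_a,F_b,F_c)$ up to frozen monomials. Neither assertion is correct: Pl\"ucker coordinates on $\tGr(k,n)$ are labeled by $k$-element \emph{subsets} of $[n]$, so repeated indices do not define a $P_J$ on this space. More fundamentally, $\pi$ sends each flag to its \emph{first} vector only, so for $J=\{j_1,\dots,j_k\}$ one has $\pi^*(P_J)=\det(v_1(F_{j_1}),\dots,v_1(F_{j_k}))$, a function of $k$ distinct flags through their first vectors. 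By contrast the Fock--Goncharov edge and face functions $f_{i_1i_2i_3}$ depend on at most three flags and use \emph{several} vectors from each. For $k>3$ no pullback of a Pl\"ucker coordinate is an edge or face function, so no choice of triangulation will make the pulled-back cluster land inside the Fock--Goncharov cluster. (You appear to be conflating the $\tGr(k,n)$ Pl\"ucker coordinates with the Pl\"ucker coordinates on $\tGr(k,nk)$, where the dictionary with $f_{i_1i_2i_3}$ you invoke really does hold.)

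The paper treats this case by a substantively different and deeper route: $\Conf_n\A$ is birational to a positroid stratum $\Pi\subset\tGr(k,nk)$, the pulled-back Pl\"ucker collection $\{\pi^*(f_i)\}$ is checked to be weakly separated and weakly separated from the Grassmann necklace of the positroid, and the Oh--Postnikov--Speyer theorem then guarantees it extends to a cluster on $\Pi$. This is exactly the step the paper flags as ``somewhat more difficult'' and the reason they also supply an alternative argument in Section 3.3 that avoids relying on the cluster-fibration property for this map. If you want a self-contained argument, you would need to either reproduce the weak-separation argument or work through the $\overline{\Conf}_{nk}V \leftrightarrow \Conf_{2n}\A$ dictionary of Section 3.3; the direct triangulation-matching you propose does not exist.
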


\begin{proof} The first two claims are easy.

The first claim follows from direct inspection of the coordinate charts that we introduced on $\Conf_n V$. For example, Figure~1 contained the cluster structure for $\Gr(4,8)$. Below we depict the cluster structure for $\Gr(4,9)$. It contains Figure 1 after removing the first column. 

\begin{figure}
\begin{tikzpicture}[scale=2]
\begin{scope}[xshift=-1cm]
  \node[] (x-10) at (-1,0) {$1239$};
  \node[] (x00) at (0,0) {$1238$};
  \node[] (x10) at (1,0) {$1237$};
  \node[] (x20) at (2,0) {$1236$};
  \node[] (x30) at (3,0) {$1235$};
  \node[] (x40) at (4,0) {$1234$};

  \node[] (x-11) at (-1,-1) {$1289$};
  \node[] (x01) at (0,-1) {$1278$};
  \node[] (x11) at (1,-1) {$1267$};
  \node[] (x21) at (2,-1) {$1256$};
  \node[] (x31) at (3,-1) {$1245$};

  \node[] (x-12) at (-1,-2) {$1789$};
  \node[] (x02) at (0,-2) {$1678$};
  \node[] (x12) at (1,-2) {$1567$};
  \node[] (x22) at (2,-2) {$1456$};
  \node[] (x32) at (3,-2) {$1345$};

  \node[] (x-13) at (-1,-3) {$6789$};
  \node[] (x03) at (0,-3) {$5678$};
  \node[] (x13) at (1,-3) {$4567$};
  \node[] (x23) at (2,-3) {$3456$};
  \node[] (x33) at (3,-3) {$2345$};

  \draw [->] (x40) to (x30);
  \draw [->] (x30) to (x20);
  \draw [->] (x20) to (x10);
  \draw [->] (x10) to (x00);
  \draw [->] (x00) to (x-10);
  \draw [->] (x31) to (x21);
  \draw [->] (x21) to (x11);
  \draw [->] (x11) to (x01);
  \draw [->] (x01) to (x-11);
  \draw [->] (x32) to (x22);
  \draw [->] (x22) to (x12);
  \draw [->] (x12) to (x02);
  \draw [->] (x02) to (x-12);

  \draw [->] (x-12) to (x-13);
  \draw [->] (x-11) to (x-12);
  \draw [->] (x-10) to (x-11);
  \draw [->] (x02) to (x03);
  \draw [->] (x01) to (x02);
  \draw [->] (x00) to (x01);
  \draw [->] (x12) to (x13);
  \draw [->] (x11) to (x12);
  \draw [->] (x10) to (x11);
  \draw [->] (x22) to (x23);
  \draw [->] (x21) to (x22);
  \draw [->] (x20) to (x21);
  \draw [->] (x32) to (x33);
  \draw [->] (x31) to (x32);
  \draw [->] (x30) to (x31);

  \draw [->] (x-13) to (x02);
  \draw [->] (x-12) to (x01);
  \draw [->] (x-11) to (x00);
  \draw [->] (x03) to (x12);
  \draw [->] (x02) to (x11);
  \draw [->] (x01) to (x10);
  \draw [->] (x13) to (x22);
  \draw [->] (x12) to (x21);
  \draw [->] (x11) to (x20);
  \draw [->] (x23) to (x32);
  \draw [->] (x22) to (x31);
  \draw [->] (x21) to (x30);
  \draw [->] (x33) to (x40);

\end{scope}


\end{tikzpicture}
\caption{\label{Gr49seed} The cluster structure for $\tGr(4,9)$.}
\end{figure}

Thus it is clear that the map $\pi_9: \Gr(4,9) \rightarrow \Gr(4,8)$ is a cluster fibration. The generalization to general $k$ and $n$ should be clear: the map $\pi_n: \Conf_n V \rightarrow \Conf_{n-1} V$ is a cluster fibration. The same follows for the other maps $\pi_i$ after using the twisted cyclic shift.

The second claim follows similarly. Recall that the cluster structure on $\Conf_n \A$ comes from a triangulation of an $n$-gon. Consider any triangulation of the $n$-gon that uses the triangle with vertices labeled $1, n-1, n$. Then it will contain a triangulation of the $n-1$-gon with vertices $1, 2, \dots, n-1$. Thus it is clear that the map $\pi_n: \Conf_n \A \rightarrow \Conf_{n-1} \A$ is a cluster fibration. Again, we can use the twisted cyclic shift to get the same result for other $\pi_i$.

The third claim is somewhat more difficult, in that it relies on one of the important results of \cite{OPS}. First of all, we need that the space $\Conf_n \A$ is birational to a positroid stratum $\Pi \subset \tGr(k,nk)$. (This was explained, for example, in \cite{MuS}, but was known by others before that paper). The birational map comes from the fact that these spaces have identical cluster structures. We recall that the frozen variables for the cluster structure on $\Pi$ are given by the Grassmann necklace for $\mathcal{M}$, which we denote by $\vec{\mathcal{I}}$. We need the following theorem:

\begin{theorem} (\cite{OPS})  Any weakly separated collection $\mathcal{C}$ of Pl\"ucker coordinates lying in a positroid $\mathcal{M}$, such that $\mathcal{C} \cup \mathcal{I}$ is also weakly separated, can be completed to a cluster on the associated positroid stratum $\Pi$.
\end{theorem}

Now take any cluster on $\Conf_n V$ consisting of functions $\{f_i\}$ which are all Pl\"ucker coordinates (there are many such clusters). The Pl\"ucker coordinates $\{f_i\}$ are a weakly separated collection, because they are a cluster in $\tGr(k,n)$. Next observe that the functions $\{\pi^*(f_i)\}$ are all non-zero on $\Conf_n \A$, and hence belong to the positroid $\mathcal{M}$ associated to the positroid stratum $\Pi$. The map $\pi: \Pi \rightarrow \Conf_n V$ comes from forgetting certain vectors. 
Thus, because the collection $\{f_i\}$ is weakly separated, so is the collection $\{\pi^*(f_i)\}$. It is also straightforward to verify that this collection is weakly separated from the frozen variables. Explicitly, every frozen variable for $\Pi$ is a disjoint union of two cyclic intervals $[i,\dots,i+a-1] \cup [j,\dots,j+b-1]$ where $a+b=k$ and $j = i+k$, 
and any of the Pl\"ucker coordinates $\pi^*(f_i)$ can only intersect the cyclic interval $[i,j]$ at its endpoints. It follows that $\pi^*(f_i)$ is weakly separated from $\vec{\mathcal{I}}$, and thus can be extended to a cluster on $\Pi$.  
\end{proof}

\subsection{Positive fibrations and tropical geometry}

Let us deduce some consequences of the fact that a map $\pi: Y_1 \rightarrow Y_2$ is a positive fibration.

\begin{lemma}\label{welldefined} There is a well-defined tropicalization of the map $\pi$:
$$\pi^t: Y_1(\Z^t) \rightarrow Y_2(\Z^t).$$
\end{lemma}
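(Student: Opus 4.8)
The plan is to produce the map $\pi^t$ by descending the composite of $\pi$ on $\cK_{>0}$-points with the valuation map, using the surjectivity of $-\val$ onto tropical points. First I would recall the setup from the previous subsection: for any positive variety $Y$ there is a surjection $-\val \colon Y(\cK_{>0}) \to Y(\Z^t)$, and a point $x^t$ is characterized by $f(x^t) = -\val f(x)$ for every positive rational function $f$. Since $\pi$ respects positive structures (property (1) of being a positive fibration), it induces a map $\pi \colon Y_1(\cK_{>0}) \to Y_2(\cK_{>0})$ on $\cK_{>0}$-points — indeed, a point of $Y_1(\cK_{>0})$ is just an assignment of $\cK_{>0}$-values to the coordinates of a positive chart, and $\pi^*$ carries positive rational functions to positive rational functions, so evaluating the pulled-back chart functions of $Y_2$ at $x$ gives a well-defined $\cK_{>0}$-point $\pi(x)$ of $Y_2$.

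The core step is to check that $-\val \circ \pi \colon Y_1(\cK_{>0}) \to Y_2(\Z^t)$ factors through $-\val \colon Y_1(\cK_{>0}) \to Y_1(\Z^t)$, i.e.\ that if $x, y \in Y_1(\cK_{>0})$ have the same image in $Y_1(\Z^t)$, then $\pi(x)$ and $\pi(y)$ have the same image in $Y_2(\Z^t)$. This I would deduce from property (2): choose a positive chart $\{f_i\}$ for $Y_2$ such that $\{\pi^*(f_i)\}$ extends to a positive chart for $Y_1$. A tropical point of $Y_2(\Z^t)$ is determined by the values of the $f_i(\,\cdot\,^t)$, and for $z \in Y_2(\cK_{>0})$ these are $-\val f_i(z)$. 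Applying this to $z = \pi(x)$ gives that the image of $\pi(x)$ in $Y_2(\Z^t)$ is determined by the numbers $-\val f_i(\pi(x)) = -\val (\pi^* f_i)(x)$. But $\pi^* f_i$ is one of the coordinate functions of a positive chart on $Y_1$, so $-\val(\pi^* f_i)(x)$ is exactly the value of the corresponding coordinate of the tropical point $x^t \in Y_1(\Z^t)$. Hence the image of $\pi(x)$ in $Y_2(\Z^t)$ depends only on $x^t$, which is precisely the required well-definedness; we set $\pi^t(x^t)$ to be this point, and by construction the diagram
\begin{equation*}
\begin{tikzcd}
Y_1(\cK_{>0}) \arrow{r}{\pi} \arrow{d}{-\val} & Y_2(\cK_{>0}) \arrow{d}{-\val} \\
Y_1(\Z^t) \arrow{r}{\pi^t} & Y_2(\Z^t)
\end{tikzcd}
\end{equation*}
commutes. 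Surjectivity of $-\val$ on the left guarantees $\pi^t$ is defined on all of $Y_1(\Z^t)$.

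Finally I would record the explicit tropical formula that falls out: in the charts $\{f_i\}$ and $\{\pi^* f_i\} \cup \{g_j\}$, the map $\pi^t$ simply reads off the coordinates indexed by the $f_i$ and forgets the extra coordinates $g_j$; more generally, if $\pi^* f_i$ is expressed as a subtraction-free rational expression in some positive chart on $Y_1$, then $f_i(\pi^t(p))$ is the tropicalization of that expression evaluated at $p$. I should also note independence of the auxiliary choice of chart: this is immediate because, as observed in the previous subsection, a change of positive charts transforms $-\val f$ tropically, so the point $\pi^t(x^t)$ we constructed does not depend on which positive chart of $Y_2$ (or of $Y_1$) we used to verify well-definedness.

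The main obstacle is essentially bookkeeping rather than depth: one must be careful that property (2) of a positive fibration is genuinely what is needed, namely that the pulled-back chart functions $\{\pi^* f_i\}$ are themselves honest coordinates in some positive atlas on $Y_1$ (not merely positive rational functions), since it is this that lets us identify $-\val(\pi^* f_i)(x)$ with a coordinate of $x^t$ and thereby conclude the factorization. If one only had property (1), the numbers $-\val(\pi^* f_i)(x)$ would still be determined by $x^t$ (every positive rational function is tropically evaluated on $Y_1(\Z^t)$), so in fact property (1) alone suffices for this lemma; property (2) becomes essential only later, for statements about $\pi^t$ being a fibration or surjection. I would remark on this so the reader sees exactly which hypothesis is used where.
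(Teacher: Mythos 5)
Your proof is correct, and your concluding remark pinpoints exactly the paper's own (one-line) argument: property (1) of a positive fibration says that every coordinate function in any positive chart on $Y_2$ pulls back to a positive rational function on $Y_1$, and one then simply tropicalizes that subtraction-free expression (replacing $+,\times,\div$ with $\max,+,-$); this is all that is used. The detour through $\cK_{>0}$-points and property (2) in the main body of your argument is sound and gives a nice conceptual picture via the commuting square, but as you yourself observe, it is more machinery than the lemma requires.
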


\begin{proof} This follows directly from the first part of the definition of a positive fibration. Any coordinate function in any positive chart on $Y_2$ pulls back to a positive rational function in any coordinate chart on $Y_1$. Replace $+, \times, \div$ by $\max, +, -$ to get the corresponding tropical expressions.
\end{proof}

We will need a refinement of the above lemma. Recall that for any positive variety $Y$, we can relate the sets $Y(\cK_{>0})$ and $Y(\Zt)$ by the map $-\val$:
$$-\val:  Y(\cK_{>0}) \rightarrow Y(\Zt).$$

\begin{prop}\label{wellbehaved} Suppose that we have $y \in Y_2(\cK_{>0})$ such that $-\val(y) = y^t$. Moreover, suppose that under the map
$$\pi^t: Y_1(\Z^t) \rightarrow Y_2(\Z^t),$$
we have that $\pi^t(x^t)=y^t$. Then there exists $x \in Y_1(\cK_{>0})$ such that $\pi(x)=y$.
\end{prop}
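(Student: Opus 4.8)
The plan is to use the second property of a positive fibration, which provides a positive chart $\{f_i\}$ for $Y_2$ whose pullbacks $\{\pi^*(f_i)\}$ extend to a positive chart $\{\pi^*(f_i)\} \cup \{h_j\}$ for $Y_1$. First I would work in these two compatible charts. Since $y \in Y_2(\cK_{>0})$, the values $f_i(y) \in \cK_{>0}$ are prescribed, and by hypothesis $\val f_i(y) = -f_i(y^t)$. Because $\pi^t(x^t) = y^t$ and the tropicalization $\pi^t$ is computed chartwise by the tropical versions of the transition functions (Lemma \ref{welldefined}), we have $\pi^*(f_i)(x^t) = f_i(\pi^t(x^t)) = f_i(y^t)$ for every $i$.

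The key step is then to build $x \in Y_1(\cK_{>0})$ by specifying its coordinates in the distinguished chart $\{\pi^*(f_i)\} \cup \{h_j\}$. For the coordinates coming from $\pi^*(f_i)$, I would simply set $\pi^*(f_i)(x) := f_i(y)$, using the actual Laurent series, not just their valuations. For the remaining coordinates $h_j$, I would choose any elements of $\cK_{>0}$ whatsoever — for concreteness, Laurent series whose valuations equal $-h_j(x^t)$, e.g. monomials $t^{-h_j(x^t)}$ — so that $-\val(x) = x^t$ holds as well (though only $\pi(x) = y$ is strictly required). Since $\{\pi^*(f_i)\} \cup \{h_j\}$ is a genuine positive chart, prescribing these coordinates determines a unique point $x \in Y_1(\cK_{>0})$; invertibility of the transition maps guarantees this point is well-defined independently of which positive chart one reads it in.

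Finally I would verify $\pi(x) = y$. Two points of $Y_2(\cK_{>0})$ agree iff all the coordinates $f_i$ of some positive chart agree on them, so it suffices to check $f_i(\pi(x)) = f_i(y)$ for all $i$. But $f_i(\pi(x)) = (\pi^*(f_i))(x) = f_i(y)$ by construction, so $\pi(x) = y$ as desired.

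The main obstacle is largely bookkeeping rather than conceptual: one must be careful that the "extend to a positive chart" clause genuinely lets one prescribe the pulled-back coordinates and the extra coordinates independently, and that a point of $Y(\cK_{>0})$ is determined by its coordinates in a single positive chart. The first is exactly the content of property (2) of a positive fibration together with the fact that any positive chart gives an isomorphism (over $\cK_{>0}$) onto $(\cK_{>0})^{\dim Y_1}$; the second is the standard fact, used already in the surjectivity of $-\val$ in Section 2, that the transition functions between positive charts are invertible positive rational maps. With those two points in hand the argument is immediate.
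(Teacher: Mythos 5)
Your proposal is correct and is essentially the paper's own proof: both arguments choose a chart $\{g_j\}$ on $Y_2$ whose pullbacks sit inside a chart on $Y_1$, set the pulled-back coordinates of $x$ to equal $g_j(y)$ exactly (as Laurent series), and fill in the remaining coordinates with arbitrary elements of $\cK_{>0}$ of the valuations dictated by $x^t$. Your final verification that $\pi(x)=y$ by comparing coordinates is a small extra step that the paper leaves implicit but is the same in substance.
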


\begin{proof} For this proposition, we will use the full strength of our definition for a positive fibration. The proof is straightforward.

Choose coordinate charts for $Y_1$ and $Y_2$ consisting of functions $\{f_i\}$ and $\{g_j\}$, respectively, such that $\{\pi^*(g_j)\} \subset \{f_i\}$. Because we want $\pi(x)=y$, we need to have that
$$g_j(y)=\pi^*(g_j)(x).$$
This determines the coordinates of the point $x$ at the coordinates $\pi^*(g_j)$. Note that this is consistent with the fact that
$$-\val(g_j(y))=g_j^t(y^t),$$
and
$$-\val(\pi^*(g_j)(x))=\pi^*(g_j)^t(x^t)=g_j^t(y^t).$$
Now the other coordinates in the set $\{f_i\} \backslash \{\pi^*(g_j)\}$ can be set to any values with the appropriate valuations. More precisely, let $f \in \{f_i\} \backslash \{\pi^*(g_j)\}$. Then we can set $f(x)$ to be any element of $\cK_{>0}$ such that
$-\val (f(x))=f^t(x^t)$.

\end{proof}

Note that the above proof also proves the following, which is not immediately obvious:

\begin{cor} Suppose that $\pi: Y_1 \rightarrow Y_2$ is a positive fibration. Then the map $\pi^t: Y_1(\Z^t) \rightarrow Y_2(\Z^t)$ is surjective.
\end{cor}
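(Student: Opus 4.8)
The plan is to extract the argument already implicit in the proof of Proposition~\ref{wellbehaved}, combined with the surjectivity of $-\val$ on $Y_2$. The essential point is that the square
\[
\begin{tikzcd}
Y_1(\cK_{>0}) \arrow[r, "\pi"] \arrow[d, "-\val"'] & Y_2(\cK_{>0}) \arrow[d, "-\val"] \\
Y_1(\Z^t) \arrow[r, "\pi^t"'] & Y_2(\Z^t)
\end{tikzcd}
\]
commutes. Indeed, if $g_j$ is a coordinate of a positive chart on $Y_2$, then $\pi^*(g_j) \in \cO(Y_1)$ is a positive rational function, and for any $x \in Y_1(\cK_{>0})$ the quantity $-\val(\pi^*(g_j)(x))$ is obtained from the $-\val$'s of the coordinates of $x$ by the associated $\max$-plus expression; but that is exactly the recipe defining $\pi^t$ in Lemma~\ref{welldefined}. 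Thus $g_j^t\big(\pi^t(-\val(x))\big) = g_j^t\big(-\val(\pi(x))\big)$ for all $j$, and since the $g_j^t$ are coordinates this forces $\pi^t(-\val(x)) = -\val(\pi(x))$.

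Granting this, the argument runs as follows. Fix $y^t \in Y_2(\Z^t)$. By the surjectivity of $-\val : Y_2(\cK_{>0}) \to Y_2(\Z^t)$ established in Section~2.1 (one prescribes the valuations of the coordinates of any positive chart), choose $y \in Y_2(\cK_{>0})$ with $-\val(y) = y^t$. Next I lift $y$ to $Y_1(\cK_{>0})$ using the construction from the proof of Proposition~\ref{wellbehaved}: one cannot quote that proposition as a black box, since its hypothesis presupposes a tropical point $x^t$ over $y^t$ — which is what we are trying to produce — but the construction in its proof only uses $y$. Namely, pick positive charts $\{f_i\}$ on $Y_1$ and $\{g_j\}$ on $Y_2$ with $\{\pi^*(g_j)\} \subseteq \{f_i\}$ (available by the second axiom of a positive fibration), set $\pi^*(g_j)(x) := g_j(y)$ for each $j$, and set the remaining coordinates $f_i(x)$ to arbitrary nonzero elements of $\cK_{>0}$. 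This defines $x \in Y_1(\cK_{>0})$ with $\pi(x) = y$. Finally put $x^t := -\val(x) \in Y_1(\Z^t)$; chasing the square above, $\pi^t(x^t) = \pi^t(-\val(x)) = -\val(\pi(x)) = -\val(y) = y^t$. Hence $\pi^t$ is surjective.

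I expect the only genuine content to be (i) the verification that the tropicalization square commutes — essentially unwinding the definitions of $\pi^t$ and of $-\val$ on a subtraction-free expression — and (ii) the observation that the proof of Proposition~\ref{wellbehaved} constructs a $\cK_{>0}$-lift of $y$ from $y$ alone, with no a priori tropical point needed; after that the statement is a formal diagram chase. The one small point requiring care is that the free coordinates $f_i(x)$ must be chosen nonzero so that $x$ really lies in $Y_1(\cK_{>0})$, which is harmless since their valuations are irrelevant to the argument.
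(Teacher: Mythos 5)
Your proof is correct and takes the same route the paper intends: the paper only remarks that "the above proof also proves the following," and your writeup makes explicit exactly that argument — lift $y^t$ to $y \in Y_2(\cK_{>0})$, use the coordinate-prescription from the proof of Proposition~\ref{wellbehaved} (needing only $y$, not an a priori tropical point) to produce $x \in Y_1(\cK_{>0})$ with $\pi(x)=y$, and chase the commuting square relating $\pi$, $\pi^t$, and $-\val$.
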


Let us now specialize to the map $\pi: \Conf_n \A \rightarrow \Conf_n V$.

\begin{cor} There is a well-defined, surjective tropicalization of the map $\pi$:
$$\pi^t: \Conf_n \A(\Z^t) \rightarrow \Conf_n V(\Z^t).$$

Moreover, suppose that we have $y \in \Conf_n V(\cK_{>0})$ such that $-\val(y) = y^t$. Moreover, suppose that under the map
$$\pi^t: \Conf_n \A(\Z^t) \rightarrow \Conf_n V(\Z^t),$$
we have that $\pi^t(x^t)=y^t$. Then there exists $x \in \Conf_n \A(\cK_{>0})$ such that $\pi(x)=y$.
\end{cor}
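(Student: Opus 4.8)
The plan is to obtain this Corollary as a direct specialization of the general results of this subsection to the map $\pi \colon \Conf_n \A \to \Conf_n V$, since all of the substantive content has already been established. The first step is to record that $\pi$ is a positive fibration: this is exactly Proposition~\ref{posfib}(3), which shows that $\pi$ is a cluster fibration, combined with the proposition asserting that cluster fibrations are positive fibrations. Once this is in hand, the first sentence of the Corollary follows by applying Lemma~\ref{welldefined} to produce the well-defined tropicalization $\pi^t$, and then the surjectivity corollary above to conclude that $\pi^t$ is surjective.

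For the ``moreover'' clause I would apply Proposition~\ref{wellbehaved} verbatim, taking $Y_1 = \Conf_n \A$, $Y_2 = \Conf_n V$, and the map $\pi$. The hypotheses match exactly: we are handed $y \in \Conf_n V(\cK_{>0})$ with $-\val(y) = y^t$ and a point $x^t \in \Conf_n \A(\Z^t)$ with $\pi^t(x^t) = y^t$, and the conclusion of Proposition~\ref{wellbehaved} is precisely the existence of $x \in \Conf_n \A(\cK_{>0})$ with $\pi(x) = y$. Unwinding that proof in the present case: using Proposition~\ref{posfib}(3) one fixes a cluster $\{g_j\}$ for $\Conf_n V$ whose pullback sits inside a cluster $\{f_i\}$ for $\Conf_n \A$, sets the coordinates $\pi^*(g_j)(x) := g_j(y)$ (which forces $\pi(x) = y$ and is consistent with the valuations since $-\val(g_j(y)) = g_j^t(y^t) = \pi^*(g_j)^t(x^t)$), and then assigns the remaining coordinates in $\{f_i\} \setminus \{\pi^*(g_j)\}$ arbitrary values in $\cK_{>0}$ with valuations dictated by $x^t$.

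Since every ingredient is already in place, there is essentially no obstacle here; the only nontrivial input is Proposition~\ref{posfib}(3), whose proof in turn relied on the theorem of \cite{OPS} on completing weakly separated collections in positroid strata. The rest is purely formal bookkeeping — tropicalizing positive rational expressions by substituting $(\max,+,-)$ for $(+,\times,\div)$, and using invertibility of the cluster transition maps to prescribe valuations freely within a single chart.
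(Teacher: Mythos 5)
Your proposal is correct and matches the paper's implicit argument exactly: the corollary is stated in the paper without proof precisely because it is the immediate specialization of Lemma~\ref{welldefined}, the surjectivity corollary, and Proposition~\ref{wellbehaved} to the map $\pi\colon \Conf_n\A \to \Conf_n V$, with Proposition~\ref{posfib}(3) supplying the hypothesis that $\pi$ is a positive fibration. No further comment is needed.
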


\subsection{Alternative approach}

In this section, explain how to deduce the final corollary of the last section without using that $\pi: \Conf_n \A \rightarrow \Conf_n V$ is a cluster fibration or a positive fibration. We do this because the fact that $\pi$ is a cluster fibration follows from the deep results of \cite{OPS}. Moreover, we would like to give an argument which is concrete and geometric and which may be applicable in other contexts.

Note that a pair of generic flags for $SL_k$ determines $k$ vectors. This means that $\Conf_{nk} V$ and $\Conf_{2n} \A$ are very closely related. In fact, they are ``almost'' isomorphic cluster algebras-- they have the same $B$-matrices, and there is a natural bijective correspondence between their respective clusters and cluster variables-- differing only in details involving frozen variables \cite{F1}. For example,  $\Conf_{2n} \A$ has more frozen variables than $\Conf_{nk} V$. More precisely, consider the subvariety $\overline{\Conf}_{nk} V \subset \Conf_{nk} V$ consisting of configurations of $nk$ vectors labeled $v_{11}, \dots, v_{1k},$ $v_{21}, \dots, v_{2k},$ $\dots, v_{n1}, \dots, v_{nk}$, such that $\det(v_{i1}, \dots, v_{ik})=1$. It inherits a positive structure from $\Conf_{nk} V$. Then there are maps in both directions $\overline{\Conf}_{nk} V \leftrightarrow \Conf_{2n} \A$, sending positive points in one space to positive points in the other. In other words, we have an isomorphism of positive varieties between $\overline{\Conf}_{nk} V$ and $\Conf_{2n} \A$.

Let us explain the maps in both directions. Suppose that we have a configuration of $nk$ vectors that lies in $\overline{\Conf}_{nk} V$. We will label them $v_{11}, \dots, v_{1k},$ $v_{21}, \dots, v_{2k},$ $\dots, v_{n1}, \dots, v_{nk}$ as before. From these vectors, we can form the flags
$$F_1=(v_{11}, \dots, v_{1k})$$
$$F_2=(v_{1k}, \dots, v_{11})$$
$$F_3=(v_{21}, \dots, v_{2k})$$
$$F_4=(v_{2k}, \dots, v_{21})$$
$$\dots$$
$$F_{2n-1}=(v_{n1}, \dots, v_{nk})$$
$$F_{2n}=(v_{nk}, \dots, v_{n1}).$$


In the reverse direction, given $F_1, \dots, F_{2n}$, the intersection of $F_{2i-1}$ and $F_{2i}$ generically determines a line, and there is a unique $\SL(V)$-equivariant way to choose a point in this line.  
Computing these intersections for $i=1,\dots,k$ provides the vectors $v_{i1}, \dots, v_{ik}$. Because we are dealing with $\SL_k$ flags, we automatically have $\det(v_{i1}, \dots, v_{ik})=1$. The composite of these two maps is not the identity, but it rescales every cluster variable by some monomial in the frozen variables. 

Nonetheless, these maps preserve the positive structures on $\Conf_{2n} \A$ and $\overline{\Conf}_{nk} V$.  This means that a positive configuration of $nk$ vectors $v_{11}, \dots, v_{1k},$ $v_{21}, \dots, v_{2k},$ $\dots, v_{n1}, \dots, v_{nk}$, i.e., a point of $\overline{\Conf}_{nk} V(\R_{>0})$, determines a positive configuration of $2n$ flags $F_1, \dots, F_{2n}$, i.e., a point of $\Conf_{nk} \A(\R_{>0})$, and vice versa.

We can then use this to show that
$$\pi^t: \Conf_n \A(\Z^t) \rightarrow \Conf_n V(\Z^t)$$
is well-defined. Recall that in order to do this, we need to show that if $f$ is a positive function on $\Conf_n V$, it pulls back to a positive rational function on $\Conf_n \A$.

We will need one other fact.
\begin{lemma}
If $\pi: Y_1 \rightarrow Y_2$ is a positive fibration, then if $f \in \cO(Y_2)$ and $\pi^*(f)$ is a positive rational function on $Y_1$, then $f$ is a positive rational function on $Y_2$.
\end{lemma}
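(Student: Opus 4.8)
The plan is to exploit the second (stronger) half of the definition of a positive fibration, together with the elementary observation that a function pulled back from $Y_2$ involves only the ``$Y_2$-coordinates'' of a suitable chart on $Y_1$. So, fix a positive chart $\{f_i\}$ for $Y_2$ such that $\{\pi^*(f_i)\}$ extends to a positive chart $\{\pi^*(f_i)\}\cup\{h_j\}$ for $Y_1$; such charts exist by hypothesis. Since these are coordinate charts (birational maps to affine space, or a torus), the $f_i$ are algebraically independent with $\C(Y_2)=\C(f_i)$, likewise $\C(Y_1)=\C(\pi^*(f_i),h_j)$, and $\pi^*\colon\C(Y_2)\hookrightarrow\C(Y_1)$ identifies $f_i$ with $\pi^*(f_i)$.

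First I would record two competing descriptions of $\pi^*(f)$. On one hand, writing $f=R(f_1,\dots,f_m)$ for a rational function $R$, we get $\pi^*(f)=R(\pi^*(f_1),\dots,\pi^*(f_m))$, so $\pi^*(f)$ lies in the subfield $\C(\pi^*(f_i))\subset\C(Y_1)$ and involves no $h_j$ at all. On the other hand, by hypothesis $\pi^*(f)$ is a positive rational function, so in the chart $\{\pi^*(f_i)\}\cup\{h_j\}$ we may write $\pi^*(f)=P/Q$ with $P,Q$ polynomials having non-negative coefficients in the $\pi^*(f_i)$ and $h_j$ and $Q\not\equiv 0$. The key step is then to \emph{specialize the fibre variables $h_j$ to $1$}. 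Because $Q$ is a nonzero subtraction-free polynomial, $Q(\pi^*(f_i),1)$ is again nonzero (no cancellation can occur), hence nonzero as an element of $\C(\pi^*(f_i))$ since the $\pi^*(f_i)$ are algebraically independent; thus $P(\pi^*(f_i),1)/Q(\pi^*(f_i),1)$ makes sense in $\C(\pi^*(f_i))$. Since $\pi^*(f)$ already lies in $\C(\pi^*(f_i))$, applying this specialization to the identity $\pi^*(f)=P/Q$ leaves the left side untouched and yields
$$\pi^*(f)=\frac{P(\pi^*(f_1),\dots,\pi^*(f_m),1,\dots,1)}{Q(\pi^*(f_1),\dots,\pi^*(f_m),1,\dots,1)}$$
in $\C(\pi^*(f_i))$. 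Transporting this back along $\pi^*\colon\C(Y_2)\xrightarrow{\sim}\C(f_i)$ gives $f=P(f_1,\dots,f_m,1,\dots,1)/Q(f_1,\dots,f_m,1,\dots,1)$, a manifestly subtraction-free expression in the positive coordinates $f_i$, so $f$ is a positive rational function on $Y_2$. (Equivalently, one can argue with points: for a Zariski-dense set of positive $y\in Y_2$ one picks $x\in Y_1$ in the chart with $\pi^*(f_i)(x)=f_i(y)$ and $h_j(x)=1$; then $\pi(x)=y$ and $f(y)=P(f_i(y),1)/Q(f_i(y),1)$, and density promotes this to an identity of rational functions.)

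The only real obstacle, I expect, is justifying that the substitution $h_j\mapsto 1$ is legitimate --- that $Q(\pi^*(f_i),1)$ is not identically zero and that the substitution genuinely fixes $\pi^*(f)$. Both rely entirely on the subtraction-free form of $P/Q$ and on the algebraic independence of the $\pi^*(f_i)$, which is exactly what ``coordinate chart'' provides; so this is more a matter of being careful than of finding an idea. One should also confirm, from the paper's definition, that a positive chart really does furnish algebraically independent coordinates generating the function field, so that the two descriptions of $\pi^*(f)$ above may legitimately be compared --- but this is immediate.
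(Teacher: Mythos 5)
Your proposal is correct and follows the same approach as the paper: both pass to a positive chart on $Y_1$ of the form $\{\pi^*(g_j)\}\cup\{h_j\}$ and use the fact that $\pi^*(f)$ lies in the subfield $\C(\pi^*(g_j))$ to conclude that $\pi^*(f)$, and hence $f$, is a positive rational expression in the $g_j$. Where the paper simply asserts the passage from ``positive rational on $Y_1$ and independent of the $h_j$'' to ``positive rational in $\{\pi^*(g_j)\}$ alone,'' your specialization $h_j\mapsto 1$ (together with the observation that a nonzero subtraction-free denominator cannot vanish under this substitution) is exactly the right justification of that implicit step.
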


\begin{proof} Take a positive chart $\{g_j\}$ for $Y_1$ such that $\{\pi^*(g_j)\}$ extends to a positive chart on $Y_2$. In this chart, $\pi^*(f)$ is a positive rational function. Because it descends to a function on $Y_1$ it only depends on the functions $\{\pi^*(g_j)\}$. Because  $\pi^*(f)$ is a positive rational function of the coordinates $\{\pi^*(g_j)\}$, $f$ is a positive rational function of the coordinates $\{g_j\}$.
\end{proof}

Now let $f$ be a positive rational function on $\Conf_n V$. We want to show that $\pi^*(f)$ is a positive rational function on $\Conf_n \A$. There is a forgetful map
$$\phi: \overline{\Conf}_{nk} V \rightarrow \Conf_n V$$
sending $v_{11}, \dots, v_{1k},$ $v_{21}, \dots, v_{2k},$ $\dots, v_{n1}, \dots, v_{nk}$ to $v_{11}, v_{21}, \dots, v_{n1}.$ This map is a positive fibration from Proposition~\ref{posfib}, so $\phi^*(f)$ is a positive rational function on $\overline{\Conf}_{nk} V$.

The map $\phi$ is a composite of maps
$$\overline{\Conf}_{nk} V \to \Conf_{2n} \A \xrightarrow{\phi'} \Conf_n \A \xrightarrow{\pi} \Conf_n V.$$
Here, the map $\phi'$ is a forgetful map $F_1, \dots, F_{2n} \mapsto F_1, F_3, \dots, F_{2n-1}$ and is also a positive fibration by Proposition~\ref{posfib}. Thus $\phi^*(f)$ can also be viewed as a positive rational function on $\Conf_{2n} \A$. However, it is also pulled back from a function on $\Conf_n \A$, which must be positive by the lemma, so we are done. 

Finally, suppose that we have $y \in \Conf_n V(\cK_{>0})$ such that $-\val(y) = y^t$. Moreover, suppose that under the map
$$\pi^t: \Conf_n \A(\Z^t) \rightarrow \Conf_n V(\Z^t),$$
we have that $\pi^t(x^t)=y^t$. Let us now show that there exists $x \in \Conf_n \A(\cK_{>0})$ such that $\pi(x)=y$.

Because $\phi'$ is a positive fibration, there exists a point $\widetilde{x^t} \in \Conf_{2n} \A(\Z^t)$ such that $\phi'^t(\widetilde{x^t})=x^t$. Now $\widetilde{x^t}$ is also a point of 
$\overline{\Conf}_{nk} V(\Zt)$. We can then apply Proposition~\ref{wellbehaved} to conclude that there exists $\widetilde{x} \in \Conf_{nk} V(\cK_{>0})$ such that $\phi(\widetilde{x})=y$. Then $x := \phi'(\widetilde{x}) \in \Conf_n \A(\cK_{>0})$ is the point we seek.

Thus, using only the first two easy parts of Proposition~\ref{posfib} we are still able to prove our main results from the previous section, namely that $\pi^t: \Conf_n \A(\Z^t) \rightarrow \Conf_n V(\Z^t)$ is well defined, and that $\cK_{>0}$-valued points of $\Conf_n V$ can be lifted to $\cK_{>0}$-valued points of $\Conf_n \A$ with any specified valuations given the constraints of the map $\pi^t$.

\medskip

We will later apply what we know about the map
$\pi^t: \Conf_n \A(\Z^t) \twoheadrightarrow \Conf_n V(\Z^t)$
to describe the tropicalization of $\tGr(k,n) \simeq \Conf_n V$. It was shown in \cite{Le} that the tropical points of $\Conf_n \A$ parameterize positive configurations of points in the affine building, objects we called higher laminations. It then follows that $\Conf_n V(\Z^t)$ parameterizes some equivalence classes of higher laminations.

However, there is a dual perspective. Instead of viewing $\Conf_n V(\Z^t)$ as a quotient of  $\Conf_n \A(\Z^t)$, we can view it as a subset. In other words, there is a distinguished section
$$\Conf_n V(\Z^t) \hookrightarrow \Conf_n \A(\frac{1}{k}\Z^t).$$
This does not come from any map from $\Conf_n V$ to $\Conf_n \A$. It comes from the duality conjectures, which tell us that the appropriate tropical points of $\Conf_n V$ and $\Conf_n \A$ parameterize functions on $\Conf_n V$ and $\Conf_n \A$, respectively. The natural inclusion
$$\pi^*: \cO(\Conf_n V) \hookrightarrow \cO(\Conf_n \A).$$
then gives a natural map between the tropical spaces.

We expect that our approach to understanding tropical points of $\Conf_n V$ can be extended. Suppose that we have parabolic subgroups $P_1, P_2, \dots, P_n$. Suppose that $P_i = M_iA_iN_i$ where $M_i, A_i, N_i$ are the semisimple, abelian and nilpotent subgroups in the Langlands decomposition of $P_i$. Then the space

\begin{equation*}
 \raisebox{-.2em}{$G$} \Big\backslash \prod_{i=1}^n \Big( \raisebox{.2em}{$G$}\left / \raisebox{-.2em}{$(M_iN_i)$}  \Big) \right.
\end{equation*}
should have a positive structure such that the natural map
\begin{equation*}
\Conf_n \A \longrightarrow  \raisebox{-.2em}{$G$} \Big\backslash \prod_{i=1}^n \Big( \raisebox{.2em}{$G$}\left / \raisebox{-.2em}{$(M_iN_i)$}  \Big) \right.
\end{equation*}
is a positive fibration. Note that the above map comes from the fact that $\A = G/U$ has a natural map to $G/(M_iN_i)$.

\section{Tropicalization of \texorpdfstring{$\Conf_n \A$}{the configuration space of flags}}

In \cite{Le}, it was shown that the space $\Conf_n \A$ parameterizes higher laminations. In this section we give a self-contained exposition of higher laminations. Higher laminations are defined as certain configurations of points in the affine building. Thus we will start by defining the the affine Grassmannian and the affine building.

\subsection{Affine Grassmannian and affine buildings}

Let $G$ be a simple, simply-connected complex algebraic group and let $\vG$ be its Langlands dual group. Let $\F$ be a field, which for our purposes will always be $\R$ or $\C$. Let $\cO = \F[[t]]$ be the ring of formal power series over $\F$. It is a valuation ring, where the \emph{valuation} $\val(x)$ of an element $$x = \sum_i a_i t^i \in \F[[t]]$$ is the minimum $i$ such that $a_i \neq 0$. Let $\cK = \F((t))$ be the fraction field of $\cO$. Then 
$$\Gr(\F) = \Gr(G) = G(\cK)/G(\cO)$$ is the set of $\F$-points of the \emph{affine Grassmannian} for $G$.

In this paper, we will always denote the affine Grassmannian by $\Gr$, while the usual, finite-dimensional Grassmannian will always be denoted with extra parameters $k$ and $n$, as in $\Gr(k,n)$.

Let us compare the affine Grassmanian for $\GL_k$, $\PGL_k$ and $\SL_k$. For $G=\GL_k$, a point in the affine Grassmannian corresponds to a finitely generated, rank $k$, $\cO$-submodule of $\cK^k$, i.e., a lattice in $\cK^k$. For $G=\SL_k$, a point in the affine Grassmannian corresponds to a finitely generated, rank $k$, $\cO$-submodule of $\cK^k$ such that there exist generators $v_1, \dots, v_k$ for this submodule such that $$v_1 \wedge \dots \wedge v_k=e_1 \wedge \dots \wedge e_k,$$ where $e_1, \dots, e_k$ is the standard basis of $\cK^k$. For $G=\PGL_k$, a point in the affine Grassmannian corresponds to an equivalence class of lattices up to scale: we say lattices $L$ and $L'$ are equivalent if $L=cL'$ for some $k \in \cK$. In all three cases, the affine Grassmannian consists of some space of lattices. Moreover, $G(\cK)$ acts on the space of such lattices, and the stabilizer of each lattice is isomorphic to $G(\cO)$.

The affine Grassmannian $\Gr$ also has a metric valued in dominant coweights: the set of pairs of elements of $\Gr$ up to the action of $G(\cK)$ is exactly the set of double cosets $$G(\cO) \backslash G(\cK) / G(\cO).$$ These double cosets, in turn, are in bijection with the cone $\Lambda_+$ of dominant coweights of $G$. Recall that the coweight lattice $\Lambda$ is defined as $\mathrm{Hom}(\mathbf{G}_m,T)$. The coweight lattice contains dominant coweights, those coweights lying in the dominant cone. For example, for $G=\GL_k$, the set of dominant coweights is exactly the set of $\mu=(\mu_1, \dots, \mu_k)$, where $\mu_1 \geq \mu_2 \geq \cdots \geq \mu_k$ and $\mu_i \in \Z$. For $G=\SL_k$, the dominant coweights are the subset of $\Z^k$ where $\mu_1+\cdots+\mu_k=0$. For $G=\PGL_k$, the dominant coweights are the lattice $\Z^k/(1, 1, \dots, 1)$.

Let us explain why the set of double cosets is in bijection with the set of dominant coweights.

Fix a basis $e_1, \dots, e_k$ of $\cK^k$. Then given any dominant coweight $\mu$ of $G$, there is an associated point $t^\mu$ in the (real) affine Grassmannian: to a coweight $\mu=(\mu_1, \dots, \mu_k)$ we associate the element of $G(\cK)$ with diagonal entries $t^{-\mu_i}$, and then apply that element to the trivial lattice $<e_1, \dots, e_k>$. Any two points $p$ and $q$ of the affine Grassmannian can be translated by an element of $G(\cK)$ to $t^0$ and $t^\mu$, respectively, for some unique dominant coweight $\mu$. This gives the identification of the double coset space with $\Lambda_+$.

Under this circumstance, we will write $$d(p,q) = \mu$$ and say that the distance from $p$ to $q$ is $\mu$.

Let us collect some facts about this distance function $d$. This distance function is not symmetric; one can easily check that $$d(q,p)=-w_0 d(p,q)$$ where $w_0$ is the longest element of the Weyl group of $G$. However, there is a partial order on $\Lambda$ defined by $\lambda > \mu$ if $\lambda - \mu$ is positive (i.e., in the positive span of the positive co-roots). Under this partial ordering, the distance function satisfies a version of the triangle inequality. By construction, the action of $G(\cK)$ on the affine Grassmannian preserves this distance function.

We can now introduce the affine building for $G=\PGL_k$, the case which is of the most interest to us. The affine building is a simplicial complex. The set of vertices of the affine building for $\PGL_k$ is precisely given by the points of the affine Grassmannian $\Gr(\PGL_k)$.

The $k$-simplices of the affine building are given as follows: for any lattices $L_0, L_1, \dots, L_k$, there is a $k$-simplex with vertices at $L_0, L_1, \dots, L_k$ if and only if (replacing each lattice by an equivalent one if necessary) $$L_0 \subset L_1 \subset \cdots \subset L_k \subset t^{-1}L_0.$$
We will sometimes restrict our attention to those vertices of the building that come from points in the affine Grassmannian for $\SL_k$.

The non-symmetric, coweight-valued metric we defined above descends from the affine Grassmannian to the affine building. The notion of a \emph{geodesic} with respect to this metric is sometimes useful. For our purposes, a geodesic in the building is a path that travels along edges in the building from vertex to vertex, such that the sum of the distances from vertex to vertex is minimal (with respect to the partial order defined above). It is a property of affine buildings that geodesics exist. Note that in general there will be many geodesics between any two points.

\subsection{Tropicalization of Functions}

Let us now describe the tropicalization $f_{i_1i_2i_3}^t$ of the functions $f_{i_1i_2i_3}$ introduced in Section \ref{flags}. Each $f_{i_1i_2i_3}^t$ is a function on the set of configurations of three points in the affine Grassmannian for $\SL_k$. We will later explain how configurations of $m$ points in the affine Grassmannian give tropical points of the space of configurations of $m$ principal flags. The functions $f_{i_1i_2i_3}^t$ will then be the tropicalization of the functions $f_{i_1i_2i_3}$. The functions $f_{i_1i_2i_3}^t$ are the same as the functions $H_{i_1i_2i_3}$, which were defined in a slightly different way in \cite{K}.

Let $x_1, x_2, x_3$ be three points in the affine Grassmannian for $\SL_k$, thought of as $\cO$-submodules of $\cK^k$. For $i_1, i_2, i_3$ such that $i_1 + i_2 + i_3=k$, we will consider the quantity

$$-\val(\det(u_1, \dots, u_{i_1}, v_1, \dots v_{i_2}, w_1, \dots, w_{i_3}))$$
as $u_1, \dots, u_{i_1}$ range over elements of the $\cO$-submodule $x_1$, $v_1, \dots v_{i_2}$ range over elements of the $\cO$-submodule $x_2$, and $w_1, \dots, w_{i_3}$ range over elements of the $\cO$-submodule $x_3$. Define $f_{i_1i_2i_3}^t (x_1,x_2,x_3)$ as the maximum value of of this quantity, i.e., the largest value of$$-\val(\det(u_1, \dots, u_{i_1}, v_1, \dots v_{i_2}, w_1, \dots, w_{i_3}))$$ as all the vectors $u_1, \dots, u_{i_1},$ $v_1, \dots v_{i_2},$ $w_1, \dots, w_{i_3}$ range over elements of the respective $\cO$-submodules $x_1, x_2, x_3$.

\begin{rmk} It is not hard to check that the edge functions recover the distance between two points in the affine Grassmannian (and hence also the affine building). More precisely, $f_{ij0}^t (x_1,x_2,x_3)$ is given by $\omega_j \cdot d(x_1,x_2)=\omega_i \cdot d(x_2,x_1)$ where $\omega_i$ is a fundamental weight for $SL_k$.
\end{rmk}

We can naturally extend $f_{i_1i_2i_3}^t$ to a function on the affine Grassmannian for $\PGL_k$. This will become useful later when we construct distinguished representatives for tropical points of~$X$. Let $x_1, x_2, x_3$ be three points in the affine Grassmannian for $\PGL_k$, represented by three lattices $L_1, L_2, L_3$. For $i_1, i_2, i_3$ as above, we can again minimize
$$-\val(\det(u_1, \dots, u_{i_1}, v_1, \dots v_{i_2}, w_1, \dots, w_{i_3}))$$
as $u_1, \dots, u_{i_1}$ range over elements of the $\cO$-submodule $L_1$, $v_1, \dots v_{i_2}$ range over elements of the $\cO$-submodule $L_2$, and $w_1, \dots, w_{i_3}$ range over elements of the $\cO$-submodule $L_3$. Call the resulting function 
$$\tilde{f}_{i_1i_2i_3}^t (L_1,L_2,L_3).$$ Note that $\tilde{f}_{i_1i_2i_3}^t (L_1,L_2,L_3)$ will depend on the representative lattices $L_1, L_2, L_3$ that we chose, which are only determined up to scale. To fix this, we put
\begin{equation} \label{PGLfunction}
f_{i_1i_2i_3}^t (x_1,x_2,x_3)=\tilde{f}_{i_1i_2i_3}^t (L_1,L_2,L_3)+\frac{\val(\det(L_1)\det(L_2)\det(L_3))}{k}.
\end{equation}
This will now be a function of three points in the affine Grassmannian for $PGL_k$. Notice if $L_1, L_2, L_3$ have determinant $1$ (and hence determine a point in the affine Grassmannian for $SL_k$) our definition coincides with the definition above.

It was shown in \cite{Le3} that there is another, purely metric, way to describe the functions $f_{i_1i_2i_3}^t$. First, define
$$d_i(p,x) = \omega_i \cdot d(p,x).$$
Then let $i_1+i_2+\cdots+i_m=k$. For points $x_1, x_2, \dots, x_m$ in the affine building for $\PGL_k$, we can define the functions
$$f_{i_1i_2\dots i_m}^t(x_1,x_2\dots,x_m).$$
Then we have that
\begin{theorem}\label{basicfn}
$$f_{i_1i_2 \dots i_m}^t(x_1,x_2, \dots, x_m)= \min_{p} \left(d_{i_1}(p,x_1) + d_{i_2}(p,x_2) + \cdots + d_{i_m}(p,x_m) \right)$$
where the minimum is taken over all $p$ in the affine Grassmannian for $\PGL_k$. A similar metric interpretation holds in the affine building for $\SL_k$. 
\end{theorem}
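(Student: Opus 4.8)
The plan is to reduce the statement to an optimization over lattices in a single apartment, and then to recognize that optimization as the minimum appearing in the theorem. I would begin by unwinding the definition of $f_{i_1\dots i_m}^t$: it is the maximum of $-\val(\det(\mathbf{u}_1,\dots,\mathbf{u}_m))$ where each block $\mathbf{u}_j$ consists of $i_j$ vectors drawn from the $\cO$-lattice (or $\PGL_k$-lattice, after the normalization \eqref{PGLfunction}) representing $x_j$. The first step is to observe that this maximum is attained, and that one may as well take the $i_j$ vectors in block $j$ to be part of an $\cO$-basis of $x_j$ adapted to a common apartment; concretely, pick $p$ and diagonalize all the relevant lattices simultaneously against $p$, using the fact that any finite collection of lattices lies in a common apartment of the building. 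This lets me pass from the "determinant of chosen vectors" description to a description in terms of elementary divisors, i.e.\ in terms of the coweight-valued distances $d(p,x_j)$.

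Next I would make the key computation in a single apartment. Fix a point $p$; relative to a basis diagonalizing $x_j$ against $p$, the optimal choice of $i_j$ vectors from $x_j$ to maximize $-\val$ of the determinant is to take the $i_j$ "largest" basis vectors, whose contribution to $-\val(\det)$ is exactly $\omega_{i_j}\cdot d(p,x_j) = d_{i_j}(p,x_j)$ — this is precisely the content of the Remark identifying edge functions with $\omega_j \cdot d$, now applied blockwise. Summing over $j$, the best determinant one can build using vectors "aligned to $p$" has $-\val$ equal to $\sum_j d_{i_j}(p,x_j)$. Taking the best $p$ gives $\min_p \sum_j d_{i_j}(p,x_j)$ as an upper bound for $f_{i_1\dots i_m}^t$; wait — I need the inequality in the right direction, so let me be careful: this argument shows $f^t \le \min_p \sum_j d_{i_j}(p,x_j)$ only if every optimal configuration of vectors is aligned to some common $p$, and shows $f^t \ge \sum_j d_{i_j}(p,x_j)$ for each fixed $p$ by exhibiting the aligned vectors as a valid (not necessarily optimal) choice. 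The $\ge$ direction is therefore the easy one, giving $f^t \ge \min_p(\cdots)$ after optimizing; here I am using that the minimum over $p$ is attained (buildings are "convex" in the appropriate sense, and the function $p \mapsto \sum_j d_{i_j}(p,x_j)$ is a convex, piecewise-linear, proper function on the building).

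The genuinely nontrivial direction is the reverse inequality $f_{i_1\dots i_m}^t \le \min_p \sum_j d_{i_j}(p,x_j)$: given an arbitrary choice of vectors $\mathbf{u}_j$ realizing (or approaching) the max, I must produce a point $p$ with $\sum_j d_{i_j}(p,x_j) \ge -\val(\det(\mathbf{u}_1,\dots,\mathbf{u}_m))$. The idea is to let $p$ be a lattice built from the chosen vectors themselves — roughly, $p = \langle \mathbf{u}_1,\dots,\mathbf{u}_m\rangle$ suitably saturated and rescaled to have the correct determinant — and then to estimate $d_{i_j}(p,x_j)$ from below by the "$i_j$-dimensional volume distortion" between $p$ and $x_j$ in the directions spanned by $\mathbf{u}_j$. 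This is exactly where the result of \cite{Le3} does its real work, and I expect the careful bookkeeping of valuations of subdeterminants — essentially a tropical/valuative Plücker or Cauchy–Binet estimate relating $\val\det$ of a block matrix to the $\val$'s of its maximal minors — to be the main obstacle. I would handle it by working in a common apartment containing $p, x_1,\dots,x_m$ (again invoking that finitely many lattices lie in one apartment), where all the $d_{i_j}(p,x_j)$ become explicit linear functionals of integer vectors and the inequality becomes a combinatorial statement about these vectors, provable by a greedy/rearrangement argument. Finally, I would note that the $\PGL_k$ statement follows from the $\SL_k$ statement by tracking the determinant-normalization term in \eqref{PGLfunction} through the whole argument, and that the case $m=3$ recovers the definition of $f_{i_1i_2i_3}^t$, so no separate treatment of small $m$ is needed.
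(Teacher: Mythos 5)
The paper does not actually prove Theorem~\ref{basicfn}; it is cited from \cite{Le3}, so there is no in-paper argument to compare against. Reviewing your proposal on its own terms, it has a genuine logical error: the two inequalities are systematically swapped, and the argument you assign to the ``easy'' direction does not work.

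You claim that for each fixed $p$ one has $f^t \ge \sum_j d_{i_j}(p,x_j)$, ``by exhibiting the aligned vectors as a valid choice.'' This cannot be right: the quantity $\sum_j d_{i_j}(p,x_j)$ is unbounded as $p$ runs away from the $x_j$'s, so the claim would force $f^t = +\infty$. The specific flaw is in the aligned-vectors computation. After fixing $p$ and diagonalizing $x_j$ against $p$, you obtain a basis $e^{(j)}_1,\dots,e^{(j)}_k$ of $p$ adapted to $x_j$ --- but this basis depends on $j$, and for $m\ge 3$ there is no reason for $p,x_1,\dots,x_m$ to lie in a common apartment. Assembling the selected vectors into a $k\times k$ matrix, the determinant factors as $\bigl(\prod_j t^{-d_{i_j}(p,x_j)}\bigr)\det\bigl(e^{(1)}_1,\dots,e^{(m)}_{i_m}\bigr)$, and the second factor lies in $\cO$ but is typically not a unit. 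So the aligned vectors give $-\val(\det) \le \sum_j d_{i_j}(p,x_j)$, generally with strict inequality. A ``valid choice'' of vectors only lower-bounds $f^t$ by the left-hand side, which is strictly less than $\sum_j d_{i_j}(p,x_j)$.

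The correct structure is the opposite. The easy direction is $f^t \le \sum_j d_{i_j}(p,x_j)$ \emph{for every} $p$: since $\bigwedge^{i_j} x_j \subset t^{-d_{i_j}(p,x_j)}\bigwedge^{i_j} p$, any block determinant $\det(\mathbf{u}_1,\dots,\mathbf{u}_m)$ lands in $t^{-\sum_j d_{i_j}(p,x_j)}\cdot\bigwedge^k p$, and (after normalizing $\det p$) this bounds $-\val(\det)$ above. Maximizing over the $\mathbf{u}_j$ gives $f^t \le \min_p \sum_j d_{i_j}(p,x_j)$. The genuinely nontrivial direction is the reverse: one must \emph{produce} a point $p$ with $\sum_j d_{i_j}(p,x_j) \le f^t$. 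Your idea of taking $p$ to be (a normalization of) $\langle \mathbf{u}_1,\dots,\mathbf{u}_m\rangle$ built from an optimal choice is the right move for this direction --- for $m=2$ even $p=x_1$ works, but for $m\ge 3$ the choice is more delicate --- and that is where the careful valuative/apartment bookkeeping is actually needed. If you swap the roles of the two halves, the outline becomes sound; as written, the halves are inverted and the claimed easy step is false.
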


\subsection{Configurations of flags and their tropicalization}

We now recall the description of the tropicalization of $\Conf_n \A$. We begin by defining positive configurations of points in the affine Grassmannian. The treatment here is adapted from \cite{GS} and \cite{Le}.

\begin{definition} \label{posconfig} Let $x_1, x_2, \dots x_n$ be $n$ points of the real affine Grassmannian. Then $x_1, x_2, \dots x_n$ will be called a positive configuration of points in the affine Grassmannian if and only if there exist ordered bases for $x_i$, 
$$v_{i1}, v_{i2}, \dots, v_{ik},$$ such that for each $1 \leq p < q <r \leq n$, and each triple of non-negative integers $i_1, i_2, i_3$ such that $i_1+i_2+i_3=k$,
\begin{enumerate}
\item $f_{i_1i_2i_3}^t (x_p,x_q,x_r) = -\val(\det(v_{p1}, \dots, v_{pi_1}, v_{q1}, \dots v_{qi_2}, v_{r1}, \dots, v_{ri_3}))$
\item the leading coefficient of $\det(v_{p1}, \dots, v_{pi_1}, v_{q1}, \dots v_{qi_2}, v_{r1}, \dots, v_{ri_3})$ is positive.
\end{enumerate}
\end{definition}

We will call the first condition the \emph{valuation minimizing} property of the bases $v_{i1}, v_{i2}, \dots, v_{ik}$ with respect to the configuration $x_1, x_2, \dots x_n$.

The above definition works for both $\SL_k$ and $\PGL_k$. Note that it is important in the above definition that we are taking the valuations of the determinants of the first $i_1$ (respectively $i_2, i_3$) vectors among the bases for $x_p$ (respectively $x_q, x_r$), and not just any $i_1$ (respectively $i_2, i_3$) vectors.

\begin{rmk} It is sufficient to verify the two conditions above for only those triples $p, q, r$ occuring in any given triangulation of the $m$-gon. The valuation condition and the positivity condition for one triangulation implies the these conditions for any other triangulation, and hence for an arbitrary triple $p, q, r$.
\end{rmk}

Suppose that $x_1, x_2, \dots x_n$ are a positive configuration of points in the affine Grassmannian. Points in the affine Grassmannian give vertices in the affine building, so that $x_1, x_2, \dots x_n$ can also be viewed as a \emph{positive configuration of points in the affine building}. This is more natural for us because the invariants of the configuration $x_1, x_2, \dots x_n$ (like $f_{i_1i_2i_3}^t$) will be metric in nature. Moreover, the piecewise-linear structure of the building reflects more closely the piecewise-linear structure in tropical geometry.

Higher laminations are roughly given by positive configurations of points in the affine building. More precisely, higher laminations are in bijection with \emph{virtual} positive configurations of points in the affine building. Let us define this notion.

A virtual positive configurations of $n$ points in the affine building is given by the data of $n$ ordered pairs
$$(x_1,\lambda_1), (x_2,\lambda_2), \dots (x_n,\lambda_n)$$
where $x_1, x_2, \dots x_n$ is a positive configuration of points in the affine building, and $\lambda_1, \dots, \lambda_n$ are coweights for $\SL_k$ (respectively, $\PGL_k$).

We can extend the functions $f_{i_1i_2i_3}^t$ to virtual configurations by the following formula:
$$f_{i_1i_2i_3}^t ((x_p,\lambda_p),(x_q,\lambda_q),(x_r,\lambda_r))=f_{i_1i_2i_3}^t (x_p,x_q,x_r) + \omega_{i_1}\cdot\lambda_p+\omega_{i_2}\cdot\lambda_q+\omega_{i_3}\cdot\lambda_r.$$
Suppose that the coweights $\lambda_1, \dots, \lambda_n$ are dominant. Then, using Theorem~\ref{basicfn}, we can see that the effect of $\lambda_1, \dots, \lambda_n$ is to to move each point $x_p$ a distance $\lambda_p$ farther away from the others:
$$f_{i_1i_2i_3}^t ((x_p,\lambda_p),(x_q,\lambda_q),(x_r,\lambda_r))=\min_{y} \omega_{i_1}\cdot(d(y,x_p)+\lambda_p)+\omega_{i_2}\cdot(d(y,x_q)+\lambda_q) + \omega_{i_3}\cdot (d(y,x_r)+\lambda_r).$$
Of course, when $\lambda_1, \dots, \lambda_n$ are non-dominant, we cannot use such an interpretation.

We will call virtual positive configurations of points 
$$(x_1,\lambda_1), \dots (x_n,\lambda_n)$$ and 
$$(x'_1,\lambda'_1), \dots (x'_n,\lambda'_n)$$ equivalent if for each $1 \leq p < q <r \leq m$, we have
$$f_{i_1i_2i_3}^t((x_p,\lambda_p),(x_q,\lambda_q),(x_r,\lambda_r))=f_{i_1i_2i_3}^t((x'_p,\lambda'_p),(x'_q,\lambda'_q),(x'_r,\lambda'_r)).$$
Note that this can be interpreted as a purely metric condition on the virtual positive configurations. We conjectured in \cite{Le} that non-virtual positive configurations of points are equivalent only if they are isometric (the ``if'' direction is straightforward).

\begin{definition} A higher lamination (on a disc with marked $n$ marked points) is an equivalence class of virtual positive configurations of $n$ points in the affine building up to equivalence.
\end{definition}

\begin{theorem}[\cite{GS}, \cite{Le}] There is a natural bijection between tropical points $\Conf_n \A(\Zt)$ and higher laminations on a disc with $n$ marked points. This bijection is characterized by the fact that the functions $f_{i_1i_2i_3}$ on $\Conf_n \A$ tropicalize to give the functions $f_{i_1i_2i_3}^t$ on higher laminations.
\end{theorem}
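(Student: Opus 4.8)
The plan is to realize the bijection by passing through the semifield $\cK_{>0}$ of positive Laurent series, using the surjection $-\val\colon \Conf_n\A(\cK_{>0})\twoheadrightarrow \Conf_n\A(\Zt)$ recalled in Section~2, and to identify both sides with the same lattice of coordinates. Fix a triangulation $T$ of the $n$-gon. By the theorem of \cite{FG1}, the edge and face functions $\{f_\alpha : \alpha \in T\}$ form a positive chart, so tropicalizing them identifies $\Conf_n\A(\Zt)$ with $\Z^{N}$ (where $N$ is the number of such functions), with the change of chart between two triangulations given by tropicalizing the subtraction-free cluster transition maps. On the lamination side, by the remark following Definition~\ref{posconfig} a virtual positive configuration is determined up to equivalence by the values of the $f^t_{i_1i_2i_3}$ on the triples appearing in $T$, and two virtual configurations are equivalent precisely when these values agree; thus higher laminations also inject into $\Z^{N}$ via $T$. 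The theorem then amounts to: (i) this injection is onto $\Z^N$ (realizability), and (ii) under the two identifications with $\Z^N$, the maps $-\val$ and ``pass to the underlying lamination'' agree, i.e. for $x\in\Conf_n\A(\cK_{>0})$ one has $-\val\big(f_{i_1i_2i_3}(x)\big)=f^t_{i_1i_2i_3}$ evaluated on the virtual configuration attached to $x$.

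\textbf{The reduction map and (ii).} Given a positive $\cK_{>0}$-point $x$, represent its $i$-th decorated flag by vectors $v_{i1},\dots,v_{ik}\in\cK^k$ with $v_{i1}\wedge\cdots\wedge v_{ik}=e_1\wedge\cdots\wedge e_k$. Rescaling these vectors to normalize valuations produces a point $x_i$ of the affine Grassmannian for $\SL_k$; recording the rescalings together with the residual discrepancy between the decorated flag and the reduced lattice yields coweights $\lambda_i$, so that the output is a virtual positive configuration $(x_1,\lambda_1),\dots,(x_n,\lambda_n)$. One then checks the two conditions of Definition~\ref{posconfig}: positivity of the leading coefficient of each determinant $\det(v_{p1},\dots,v_{pi_1},v_{q1},\dots,v_{qi_2},v_{r1},\dots,v_{ri_3})$ is inherited directly from $x\in\Conf_n\A(\cK_{>0})$, and — the crucial point — positivity forbids lowest-order cancellation among the monomials of this determinant, so the chosen bases are valuation minimizing and $f^t_{i_1i_2i_3}(x_p,x_q,x_r)$ equals $-\val$ of that determinant. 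Folding in the bookkeeping of the $\lambda_i$ (via the extension formula for $f^t$ to virtual configurations) gives (ii).

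\textbf{Well-definedness, injectivity, surjectivity, independence of $T$.} Two $\cK_{>0}$-lifts of the same $x^t$ yield virtual configurations with identical $f^t_{i_1i_2i_3}$-values (namely $f^t_\alpha(x^t)$ for $\alpha\in T$), hence equivalent laminations; so $x^t\mapsto$ lamination is well defined, and by (ii) it is characterized by $f_{i_1i_2i_3}\mapsto f^t_{i_1i_2i_3}$. Injectivity is immediate since a lamination is determined by those $f^t$-values. For surjectivity one uses that $-\val$ is onto in the chart $T$: the valuations $f_\alpha(x)$ may be prescribed arbitrarily in $\Z$, so every point of $\Z^N$ underlies a $\cK_{>0}$-point and hence, via the reduction map, a virtual positive configuration in the building. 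Independence of the auxiliary triangulation follows because both coordinatizations transform the same way — the cluster transition maps of \cite{FG1} tropicalize to exactly the relations among the $f^t_\alpha$ guaranteed by the remark following Definition~\ref{posconfig} (and by Theorem~\ref{basicfn}).

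\textbf{Main obstacle.} The delicate step is the reduction map: showing that the naive bases coming from a $\cK_{>0}$-representative are \emph{simultaneously} valuation minimizing for all triples of the triangulation, and that the coweight bookkeeping $(\lambda_i)$ is consistent, i.e. independent of which triangulation is used to read it off. Concretely one must rule out lowest-order cancellation in all the relevant Fock--Goncharov minors using only positivity of leading coefficients together with the structure of these determinantal functions, and then propagate this across a mutation of the triangulation. Everything else is formal manipulation with the $-\val$ map of Section~2 and the definition of equivalence of virtual positive configurations.
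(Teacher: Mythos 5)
The theorem you are proving is cited in the paper from \cite{GS} and \cite{Le}; the paper does not reprove it, but Section~4.4 (``Explicit Construction'') describes the bijection. Your overall strategy (pass through $\cK_{>0}$, coordinatize both sides via a fixed triangulation, match $-\val$ of the Fock--Goncharov minors with the building-theoretic $f^t$, then handle independence of $T$ tropically) is the natural one and is in the spirit of the cited works, but the proposal has genuine gaps at exactly the points you flag as the ``Main obstacle.''

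First, the reduction map is not correctly specified. ``Rescaling these vectors to normalize valuations'' does not produce a well-defined lattice from a decorated flag; the flag only records the forms $v_{i1}\wedge\cdots\wedge v_{ij}$, not the $v_{ij}$ themselves. The construction in Section~4.4 instead pairs the decorated flag $F_i$ with an \emph{opposite} undecorated flag $\overline{F_j}$ to split out a frame, and then takes the $\cO$-span of that frame. Crucially, this recipe is only well defined (independent of the auxiliary $j$) when $x^t$ satisfies the hive inequalities $\W^t(x^t)\ge 0$; for a general $x^t$ one first translates by an element of $H^n(\Zt)$ into the hive cone and records the translation as the coweight data $\lambda_i$. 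You never invoke the hive inequalities, so the map you sketch is not well defined, and your ``coweight bookkeeping'' is left as a placeholder.

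Second, the step ``positivity forbids lowest-order cancellation among the monomials of this determinant, so the chosen bases are valuation minimizing'' does not establish valuation minimization. Absence of cancellation shows that $-\val$ of the determinant built from the chosen frame vectors equals the sum of the individual valuations, i.e.\ it computes what you expect for that particular choice. But $f^t_{i_1i_2i_3}(x_p,x_q,x_r)$ is defined as a \emph{maximum} over all tuples of vectors in the $\cO$-modules $x_p,x_q,x_r$, and you must argue that the first $i_1$ (resp.\ $i_2$, $i_3$) frame vectors actually realize that maximum, simultaneously for every triple appearing in the triangulation. This maximality is the substantive content, it is exactly where the hive inequalities enter, and the proposal states it as an obstacle rather than resolving it. As written, the argument is therefore incomplete.
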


\begin{rmk} Let us say something about virtual configurations. The space $\Conf_n \A$ has a natural action of $H^n$. Suppose we have a point of $\Conf_n \A$ given by flags $F_1, \dots, F_n$, where the flag $F_i$ is given by vectors $$v_{i1}, v_{i2}, \dots, v_{ik}.$$ Then there is an action of $H$ on each flag by rescaling the vectors $v_{i1}, v_{i2}, \dots, v_{ik}.$ We denote this action by $h \cdot F$ where $h \in H$ and $F$ is a principal flag. Similarly, if $\overrightarrow{h} \in H^n$, we write $\overrightarrow{h} \cdot x$ for the action of $\overrightarrow{h}$ on $x \in \Conf_n \A$.

The tropicalization of this is an action of $H^n(\Zt)$ on $\Conf_n \A(\Zt)$, where $H^n(\Zt) \cong \bigoplus_{i=1}^n (\mathbb{Z}^k)$. Suppose that a point of $\Conf_n \A(\Zt)$ is given by the virtual configuration $$(x_1,\lambda_1), \dots (x_n,\lambda_n).$$ Then we have an action of $H(\Zt)$ on each $(x_i,\lambda_i)$: we have that $\mu \in H(\Zt)$ takes
$(x_i,\lambda_i)$ to $(x_i,\lambda_i+\mu)$.

This action can be made more transparent in the case that we have an actual configuration $x_1, \dots, x_n$ and dominant coweights $(\mu_1, \dots, \mu_n) \in H^n(\Zt)$. Suppose that in the notation of Definition~\ref{posconfig} we have that $x_i$ has generators $v_{i1}, v_{i2}, \dots, v_{ik}.$ Moreover, suppose the coweight $\mu_i$ as a vector is given by the $k$-tuple $(\mu_{i1}, \dots, \mu_{ik})$. Then the action of $(\mu_1, \dots, \mu_n)$ on the configuration $x_1, \dots, x_n$ takes $x_i$ to the lattice generated by
$$t^{-\mu_{i1}}v_{i1}, t^{-\mu_{i2}}v_{i2}, \dots, t^{-\mu_{ik}}v_{ik}.$$
Note that if the coweights $\mu_i$ are non-dominant, if we try to define the action as above, we may destroy the valuation minimizing property of $v_{i1}, v_{i2}, \dots, v_{ik}.$

For any $x \in \Conf_n \A(\Zt)$, its orbit under the action of $H^n(\Zt)$ is called its \emph{lineality space}. This is the analogue for configurations of flags of the lineality spaces of the tropical Grassmannian. We will later see in Section 6 that the lineality space is related to rescaling by monomials in the frozen variables under the duality conjectures.
\end{rmk}

\subsection{Explicit Construction}\label{construction}

We end this section by discussing how to explicitly construct higher laminations as well as how to evaluate more general tropical functions on them. 

Suppose we want to construct the higher lamination corresponding to a tropical point $x^t \in \Conf_n \A(\Zt)$. There exists a point $x \in \Conf_n \A(\cK_{>0})$ such that $-\val(x)=x^t$. The point $x$ corresponds to a configuration of flags $F_1, \dots, F_n$.

There is a function $\W \in \cO(\Conf_n \A)$ called the \emph{potential}. The tropicalization $\W^t$ of this function has important properties. It gives us a distinguished subset 
\begin{equation}\label{eq:hiveinequalities}
\Conf^+_n \A(\Zt)\{x^t \in \Conf_n \A(\Zt) | \W^t(x^t) \geq 0 \}.
\end{equation} 
This is precisely the set of points such that $x^t$ corresponds to a higher lamination that is a positive configuration of points in the affine building, as opposed to merely being a \emph{virtual} positive configuration of points. Let us say that the point $x^t$ satisfies the \emph{hive inequalities} if $\W^t(x^t) \geq 0$. This terminology comes from a connection to the hives of Knutson and Tao. We review the hive inequalities in Section 6.2. 

Suppose for the moment that our tropical point $x^t$ satisfies the hive inequalities. Then there is a simple procedure for going from the configuration of flags $F_1, \dots, F_n$ to a configuration of points in the affine Grassmannian or building. Take the principal flag $F_1 \in \A$. Take any other flag $F_i$ where $i \neq 1$. Then let $\overline{F_i} \in \B := G/B$ be the corresponding ordinary flag. Because $x \in \Conf_n \A(\cK_{>0})$, the flags $F_1$ and $\overline{F_i}$ are transverse, and therefore determine a frame $v_{11}, \dots v_{1k}$ for the flag $F_1$ (cf.~the discussion in Section 3.2). The $\cO$-module generated by the frame gives a point $x_1 \in \Gr$. It turns out that $x_1$ is independent of of which flag $F_i$ we chose--this is a consequence of the fact that $x^t$ satisfies the hive inequalities.

We can then similarly construct $x_1, \dots, x_n$ to get a configuration of points in the affine Grassmannian. It is not hard to check that the vectors $v_{i1}, \dots v_{ik}$ generating $x_i$ satisfy the valuation minimizing and positivity properties of Definition \ref{posconfig}, \cite{Le}, \cite{GS}.

Now suppose that $x^t$ does not satisfy the hive inequalities. Recall that the group $H^n(\Zt)$ acts on $\Conf_n \A(\Zt)$ by moving points with in their lineality space. There exists $\overrightarrow{\mu}=(\mu_1, \dots, \mu_n) \in H^n(\Zt)$ such that $\overrightarrow{\mu} \cdot x^t$ does satisfy the hive inequalities.

For $\overrightarrow{\mu} \in H^n(\Zt)$ we have a corresponding element 
$$t^{-\overrightarrow{\mu}}=(t^{-\mu_1}, \dots, t^{-\mu_n}) \in H(\cK_{>0}).$$ 
Then $t^{-\overrightarrow{\mu}} \cdot x$ is a point in $\Conf_n \A(\cK_{>0})$ such that 
$$-\val(t^{-\overrightarrow{\mu}} \cdot x) = \overrightarrow{\mu} \cdot x^t.$$
Because $\overrightarrow{\mu} \cdot x^t$ satisfies the hive inequalities, we can use the procedure above to get a positive configuration of points in the affine Grassmannian $y_1, \dots, y_n$. Then the configuration
$$(y_1, -\mu_1), \dots, (y_n, -\mu_n)$$
gives the higher lamination corresponding to the point $x^t$.

\section{Equivalence classes of higher laminations}

Recall from Section \ref{natural} that we have a map $\pi: \Conf_n \A \rightarrow \Conf_n V$ which induces a tropical map
$$\pi^t: \Conf_n \A(\Z^t) \twoheadrightarrow \Conf_n V(\Z^t).$$

For a tropical point $y^t \in \Conf_n V(\Z^t)$, we would like to consider the set $$(\pi^t)^{-1}(y^t) \subset \Conf_n \A(\Z^t).$$
This is an equivalence class of higher laminations. 

Let $x^t$ be a higher lamination such that $\pi^t(x^t)=y^t$. Suppose $x^t$ is given by the virtual configuration $(x_1,\lambda_1), \dots, (x_n,\lambda_n)$, which we will abbreviate $\{(x_i,\lambda_i)\}$. In this situation, we will say that the higher lamination $\{(x_i,\lambda_i)\}$ \emph{represents} or \emph{is a representative for} the tropical point $y^t$.

We would like to have a characterization of the different higher laminations representing $y^t$. Note that by definition, we have that
$$P_I^t(y^t) = f_{1\dots1}^t((x_i)_{i \in I}),$$
which in turn is the minimum value of
$$-\val(\det((u_i)_{i \in I})),$$
where $u_i$ range over vectors in the lattice associated to $x_i$. We also have a metric interpretation of $P_I^t$:

\begin{prop} Suppose that the higher lamination $\{(x_i,\lambda_i)\}$ represents the tropical point $y^t \in \Conf_n V(\Z^t)$. Let $I \subset [n]$. Then
$$P_I^t(y^t) = \min_{p} \sum_{i \in I} (d_1(p,x_i)+\omega_1(\lambda_i)).$$
\end{prop}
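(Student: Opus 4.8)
The plan is to reduce the statement about the Plücker coordinate $P_I^t$ on $\Conf_n V$ to the metric formula for the basic function $f^t_{1\dots 1}$ provided by Theorem~\ref{basicfn}, and then handle the bookkeeping of the virtual shifts $\lambda_i$. First I would recall that $P_I = f_{1\dots 1}$ when we view the map $\pi$ as sending each flag $F_i$ to its first vector $v_{i1}$: for $I = \{i_1 < \dots < i_k\}$, the Plücker coordinate $P_I$ pulled back along $\pi$ is exactly $\det(v_{i_1,1}, \dots, v_{i_k,1}) = f_{1,\dots,1}(F_{i_1}, \dots, F_{i_k})$ in the notation of Section~\ref{flags}. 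Tropicalizing and using that $\{(x_i,\lambda_i)\}$ represents $y^t$ (i.e. $\pi^t(x^t) = y^t$), we get
\begin{equation*}
P_I^t(y^t) = f^t_{1\dots 1}\big((x_{i_1},\lambda_{i_1}), \dots, (x_{i_k},\lambda_{i_k})\big).
\end{equation*}

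Next I would unwind the right-hand side using the definition of $f^t_{i_1 i_2 i_3}$ on virtual configurations (extended in the evident way to $m$-tuples): it equals $f^t_{1\dots 1}(x_{i_1}, \dots, x_{i_k}) + \sum_{j} \omega_1(\lambda_{i_j})$. Then I would apply Theorem~\ref{basicfn} (in the case $m = k$, all $i_j = 1$, so each $d_{i_j} = d_1$) to rewrite $f^t_{1\dots 1}(x_{i_1}, \dots, x_{i_k}) = \min_p \sum_{j} d_1(p, x_{i_j})$, where the minimum ranges over points $p$ in the affine Grassmannian for $\PGL_k$. Adding the constant $\sum_j \omega_1(\lambda_{i_j})$ back inside the minimum (it does not depend on $p$) gives
\begin{equation*}
P_I^t(y^t) = \min_p \sum_{i \in I} \big( d_1(p, x_i) + \omega_1(\lambda_i) \big),
\end{equation*}
which is the claimed formula. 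One should double-check that the index set in the sum is genuinely $I$ and that the correspondence $i \mapsto$ (first vector of $F_i$) is the one implicit in the definition of $\pi$ used throughout Section~\ref{natural}; this is the content of Proposition~\ref{posfib}(3).

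The only real subtlety, and the step I expect to be the main obstacle, is justifying the passage from the algebraic identity $\pi^* P_I = f_{1\dots 1}$ to the tropical identity $P_I^t(y^t) = f^t_{1\dots 1}(\{(x_i,\lambda_i)\})$ at the level of the actual tropical points, rather than just formally. Concretely: choose $x \in \Conf_n\A(\cK_{>0})$ with $-\val(x) = x^t$; then $\pi(x) \in \Conf_n V(\cK_{>0})$ satisfies $-\val(\pi(x)) = \pi^t(x^t) = y^t$ because $\pi$ is a positive fibration (Lemma~\ref{welldefined}). Applying $-\val$ to the identity $P_I(\pi(x)) = f_{1\dots 1}(x)$ and using that both sides are positive functions, we get $P_I^t(y^t) = f^t_{1\dots 1}(x^t)$, where the right side is computed on the higher lamination $x^t$ by the bijection of \cite{GS}, \cite{Le}; unwinding the explicit construction of that higher lamination from Section~\ref{construction} identifies it with the virtual configuration $\{(x_i,\lambda_i)\}$ and its $f^t$-value with the virtual-configuration formula above. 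Once this identification is in place, the remaining manipulation is purely the application of Theorem~\ref{basicfn} and is routine.
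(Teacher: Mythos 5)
Your proof is correct and follows the same route as the paper: identify $\pi^*P_I$ with the flag-configuration function $f_{1\dots1}$ for the subset $I$, tropicalize, and invoke Theorem~\ref{basicfn}. You have simply spelled out the bookkeeping (the virtual shifts $\omega_1(\lambda_i)$ and the passage from the algebraic identity to the tropical one via $\cK_{>0}$-points) that the paper's two-line proof leaves implicit.
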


This proposition describes the tropicalization of the Pl\"ucker coordinate $P_I$ in terms of the geometry of the building.

\begin{proof} The higher lamination $\{(x_i,\lambda_i)\}$ corresponds to the tropical point $x^t \in \Conf_n \A(\Z^t).$ The pull back of the function $P_I$ to $\Conf_n \A$ is the function $f_{1\dots1}$ associated to the subset $I \subset [n]$. Now apply Theorem \ref{basicfn}.
\end{proof}

Note that $\pi^t(x^t)$ only depends on the values of $\omega_1(\lambda_i)$ and not the coweights $\lambda_i$ themselves. In otherwords, if $\omega_1(\lambda_i)=\omega_1(\lambda'_i)$, then the higher laminations $\{(x_i,\lambda_i)\}$ and $\{(x_i,\lambda'_i)\}$ represent the same point tropical point of $\Conf_n V(\Z^t)$.

Let us point out here that it is not hard to explicitly construct higher laminations representing a particular point $y^t \in \Conf_n V(\Z^t)$. We can find a lift of $y^t$ to $x^t \in \Conf_n \A(\Z^t)$, then construct a higher lamination from $x^t$ as in Section \ref{construction}.

We discuss in the next section how, up to moving $y^t$ in its lineality space, we can represent $y^t$ by a higher lamination which corresponds to an (non-virtual) positive configuration of points in the affine Grassmannian. Therefore, let us assume for now that $y^t$ has a lift $x^t$ corresponding to the positive configurations of points $x_1, x_2, \dots x_n$ in the affine Grassmannian. Then Definition \ref{posconfig} gives us the following:

\begin{prop} There exist generators/vectors $v_i \in x_i$ such that for $I \subset [n]$, we have
\begin{enumerate}
\item $P_I^t (y^t) = -\val(\det(\{v_i\}_{i \in I}))$ and
\item the leading coefficient of $\det((v_i)_{i \in I})$ is positive.
\end{enumerate}
\end{prop}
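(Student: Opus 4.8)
The plan is to reduce this to the corresponding statement for configurations of flags, which is exactly Definition~\ref{posconfig} combined with the construction in Section~\ref{construction}. First I would invoke the hypothesis, stated just before the proposition, that $y^t$ has been moved within its lineality space so that it admits a lift $x^t \in \Conf_n \A(\Z^t)$ corresponding to a genuine (non-virtual) positive configuration of points $x_1, x_2, \dots, x_n$ in the affine Grassmannian for $\SL_k$. Concretely, pick $x \in \Conf_n \A(\cK_{>0})$ with $-\val(x) = x^t$; this gives a configuration of flags $F_1, \dots, F_n$ over $\cK_{>0}$, and since the hive inequalities are satisfied the procedure of Section~\ref{construction} produces the $x_i$ together with distinguished bases. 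The key point is that these bases are \emph{valuation minimizing} and satisfy the leading-coefficient positivity of Definition~\ref{posconfig}.

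Next I would extract the vectors. For each $i$, let $v_i := v_{i1}$ be the first vector of the valuation-minimizing basis of $x_i$ furnished by Definition~\ref{posconfig}; note $v_i \in x_i$ since it is a basis element of the $\cO$-module $x_i$. Now apply Definition~\ref{posconfig} with the triple $(i_1, i_2, i_3)$ having all relevant parts equal to $1$: more precisely, the Pl\"ucker function $P_I$ pulls back under $\pi$ to $f_{1\dots1}$ associated to $I$, and its tropicalization on configurations is $f_{1\dots1}^t((x_i)_{i\in I}) = \min\, \bigl(-\val(\det((u_i)_{i\in I}))\bigr)$ as the $u_i$ range over $x_i$, as recorded in the discussion preceding the first Proposition of this section. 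The valuation-minimizing property of Definition~\ref{posconfig}, extended from triples to arbitrary subsets via the remark that it suffices to check one triangulation, says exactly that this minimum is attained at $u_i = v_{i1} = v_i$, i.e.
$$
P_I^t(y^t) = f_{1\dots1}^t((x_i)_{i\in I}) = -\val\bigl(\det((v_i)_{i\in I})\bigr),
$$
which is claim (1). Claim (2), that the leading coefficient of $\det((v_i)_{i\in I})$ is positive, is the second bullet of Definition~\ref{posconfig} applied to these same vectors.

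I expect the only real subtlety — and hence the main obstacle — is the passage from the \emph{three-point} valuation-minimizing and positivity conditions literally written in Definition~\ref{posconfig} to the \emph{$|I|$-point} statement needed here: Definition~\ref{posconfig} quantifies over triples $p<q<r$ and triples $(i_1,i_2,i_3)$ summing to $k$, whereas $P_I$ involves $k$ distinct indices with all parts equal to $1$. This is handled by the remark following Definition~\ref{posconfig}, which says the valuation and positivity conditions for one triangulation of the $n$-gon imply them for all triples, hence (together with the fact, used throughout \cite{Le}, \cite{GS}, that $f^t_{1\dots1}$ on a $k$-subset is computed by the same valuation-minimization over the lattices) for the relevant $k$-fold determinants; equivalently, one can cite that a valuation-minimizing and leading-term-positive family of bases for a positive configuration realizes \emph{all} the tropicalized functions $f^t_{i_1\dots i_m}$ simultaneously, which is precisely what Definition~\ref{posconfig} and the construction of Section~\ref{construction} guarantee. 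Once that compatibility is in hand the proof is immediate; I would therefore keep the write-up short, essentially: choose the lift, take $v_i = v_{i1}$, and quote Definition~\ref{posconfig} together with the identification $\pi^*(P_I) = f_{1\dots1}$.
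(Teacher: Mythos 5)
Your proposal is correct and matches the paper's (essentially implicit) proof: the paper states the proposition immediately after ``Then Definition \ref{posconfig} gives us the following:'' without a proof environment, so the intended argument is exactly the one you wrote out — lift $y^t$ to a $\cK_{>0}$-point, apply the construction of Section~\ref{construction} to get valuation-minimizing frames, take $v_i := v_{i1}$, and invoke Definition~\ref{posconfig} together with the identification $\pi^*(P_I) = f_{1\dots1}$. You also correctly flag that Definition~\ref{posconfig} as literally written quantifies over triples, while $P_I$ requires the $k$-tuple version when $k>3$, which is a real gap in a literal citation.

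That said, the subtlety you identify as the ``main obstacle'' can be bypassed entirely, and this is worth noting. Once you have chosen $y \in \Conf_n V(\cK_{>0})$ with $-\val(y)=y^t$ and, by Proposition~\ref{wellbehaved}, a lift $x \in \Conf_n \A(\cK_{>0})$ with $\pi(x)=y$, the first vectors $v_i$ of the resulting principal flags are precisely the vectors representing $y$, because $\pi$ is the forgetful map to first vectors. Hence $P_I(y) = \det((v_i)_{i\in I})$ on the nose, so $P_I^t(y^t) = -\val(P_I(y)) = -\val(\det((v_i)_{i\in I}))$ follows directly from the definition of tropicalization of a positive function, and positivity of the leading coefficient follows from $P_I(y) \in \cK_{>0}$. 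The membership $v_i \in x_i$ comes from the construction of Section~\ref{construction}, which defines $x_i$ as the $\cO$-span of the frame $v_{i1},\dots,v_{ik}$. This route avoids any appeal to the $m$-tuple extension of Definition~\ref{posconfig} or to the min/max characterization of $f^t_{1\dots1}$. (One small note: the paper's sentence just before the proposition says ``minimum value of $-\val(\dots)$'' where the earlier definition of $f^t_{i_1i_2i_3}$ correctly says maximum; you repeat ``$\min$'' from that line, but this is a typo in the source, not an error introduced by you.)
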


We will call the first condition the \emph{valuation minimizing} property of the generators $v_{1}, \dots, v_{n}$ with respect to the configuration $x_1, x_2, \dots x_n$. 

\subsection{The lineality space for \texorpdfstring{$\Conf_n V$}{the configuration space of vectors}}

We have already seen the action of $H^n$ on $\Conf_n \A$, and its tropicalization to an action of $H^n(\Zt)$ on $\Conf_n \A(\Zt)$. The orbits of $H^n(\Zt)$ (respectively, $H^n(\Qt)$, $H^n(\Rt)$) are the lineality spaces of $\Conf_n \A(\Zt)$ (respectively, $\Conf_n \A(\Qt)$, $\Conf_n \A(\Rt)$).

This action is compatible with with the map $\pi: \Conf_n \A \rightarrow \Conf_n V$. There is an action of the $n$-dimensional torus $T^n$ on $\Conf_n V$ given by scaling each vector. Moreover, there is a map
$$\omega_1: H \rightarrow T$$
given by the character associated to the weight $\omega_1$. The $n$-fold product of this map is
$$\psi := \omega_1^n: H^n \rightarrow T^n.$$
The map $\psi$ intertwines the action of $H^n$ on $\Conf_n \A$ with the action of $T^n$ on $\Conf_n V$.

The lineality spaces of $\Conf_n V(\Zt)$ (respectively, $\Conf_n V(\Qt)$, $\Conf_n V(\Rt)$) are the orbits of $T^n(\Zt)$ (respectively, $T^n(\Qt)$, $T^n(\Rt)$).

As was the case for $\Conf_n \A$, any tropical point of $y^t \in \Conf_n V(\Z^t)$ has a point in its lineality space that is represented by a higher lamination that is a positive configuration of points in the affine building, as opposed to merely being a \emph{virtual} positive configuration of points.

\subsection{Horocycles}

We would now like to analyze how the different higher laminations representing a tropical point of $\Conf_n V$ are related to each other geometrically.

Let $v \in V$ be a vector. The group stabilizing the line given by the vector $v$ is a parabolic subgroup $P \subset SL_k$. It has a Langlands decomposition $P=MAN$.

Let $K \subset G$ be a maximal compact group. On the symmetric space $G/K$, one often considers orbits of the group $MN$. They are called the horocycles associated to the parabolic $P$. They are a torsor for the group $A$, because $P$ acts transitively on $G/K$. 

We will consider a non-archimedean analogue of this picture, where the symmetric space $G/K$ is replaced by the affine building. In the affine Grassmannian $\Gr$, one can consider various semi-infinite orbits. For us, the important ones will be the orbits of $M(\cK)A(\cO)N(\cK)$.

\begin{definition} For a parabolic subgroup $P \subset G$ with Langlands decomposition $P=MAN$, consider the orbits of $M(\cK)A(\cO)N(\cK)$ in $\Gr_G$. We will call the image of such an orbit in the affine building a \emph{horocycle for the parabolic $P$}.
\end{definition}

Now suppose that we have any finite number of higher laminations that represent the same point $y^t \in \Conf_n V(\Z^t)$. For simplicity, let us suppose that we have two higher laminations $\{(x_i,\lambda_i)\}$ and $\{(x'_i,\lambda'_i)\}$. By moving $y^t$ appropriately in its lineality space, we may assume that these two higher laminations are given by actual configurations $\{x_i\}$ and $\{x'_i\}$, for simplicity.

\begin{prop} Up to equivalence of higher laminations, we can arrange so that $y_i$ and $y'_i$ lie the the same horocycle.
\end{prop}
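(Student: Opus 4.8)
The plan is to exploit the fact that the two higher laminations $\{x_i\}$ and $\{x'_i\}$ induce the same tropical Plücker data on $\Conf_n V$, and to reconstruct horocycles from exactly that data. The key observation is that the parabolic $P_i \subset SL_k$ stabilizing the line associated to $x_i$ is the same as the one associated to $x'_i$: indeed, the underlying flag type (a line in $V$) is recorded by the tropicalized edge functions $f_{1j0}^t$, which only involve $\omega_1 \cdot d(x_p, x_i)$, i.e.\ exactly the information $\pi^t$ retains. So both configurations see the ``same'' line at vertex $i$, in the sense that the relevant semi-infinite orbits $M_i(\cK)A_i(\cO)N_i(\cK)$ at vertex $i$ are governed by a single parabolic $P_i = M_iA_iN_i$. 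What remains is to show that one can move within the equivalence class of higher laminations so that $x_i$ and $x'_i$ land in a common $M_i(\cK)A_i(\cO)N_i(\cK)$-orbit.

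First I would recall, using the explicit construction of Section~\ref{construction}, that a lift $x^t \in \Conf_n \A(\Zt)$ of $y^t$ is obtained from a point $x \in \Conf_n \A(\cK_{>0})$, and that the ambiguity in the lift is precisely the action of the kernel of $\psi = \omega_1^n: H^n \to T^n$ on the tropical side; equivalently, it is the freedom to modify each coweight $\lambda_i$ by anything in $\ker(\omega_1)$, together with the full lineality-space freedom. Concretely, the point $x_i \in \Gr$ is built as the $\cO$-module spanned by a valuation-minimizing frame $v_{i1}, \dots, v_{ik}$ of the flag $F_i$, and $x'_i$ arises from another such frame $v'_{i1}, \dots, v'_{ik}$ of a flag $F'_i$ with the same underlying line $[v_{i1}] = [v'_{i1}]$ (up to the $T^n$-action, which I would normalize away). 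The element of $G(\cK)$ carrying one frame to the other fixes this line, hence lies in $P_i(\cK)$, and I would decompose it according to $P_i = M_iA_iN_i$.

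The second step is to absorb the $A_i$-part. The point of passing from $SL_k$ laminations to the virtual/$\PGL_k$ picture (and of working with the extended functions $\tilde f^t$, $f^t$ of \eqref{PGLfunction}) is exactly that the $A_i(\cK)/A_i(\cO)$-degree of freedom corresponds to translating $x_i$ within its horocycle, and this is precisely the coweight slot $\lambda_i$ that $\pi^t$ forgets. So modifying the representative of the higher lamination by the appropriate element of $H(\Zt)$ acting on $(x_i,\lambda_i)$ — which by definition does not change $\pi^t(x^t) = y^t$, and changes the equivalence class only within its fibre — moves $x_i$ along its $P_i$-horocycle to meet the $M_i(\cK)N_i(\cK)$-orbit of $x'_i$. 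Carrying this out for each $i$ in turn, and checking that the adjustments at different indices are independent, gives $y_i$ and $y'_i$ in a common horocycle, as claimed; the same argument applied to an arbitrary finite collection of representatives handles the general case stated in the text.

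The main obstacle, and the step I would spend the most care on, is the bookkeeping that the $A_i$-adjustment needed to equate the horocycle-orbits can indeed be realized by an honest element of the group $H^n(\Zt)$ acting on the virtual configuration — i.e.\ that ``moving $x_i$ along its horocycle'' and ``changing $\lambda_i$ by a coweight'' are literally the same operation at the level of the functions $f^t_{i_1i_2i_3}$, via Theorem~\ref{basicfn}. This requires unwinding the relation between $d(\,\cdot\,,x_i)$ and $d(\,\cdot\,, A_i\text{-translate of }x_i)$ and checking it only shifts $d_{i_1}(p,x_i)$ by a term linear in the $A_i$-cocharacter, matching the $\omega_{i_1}\cdot\lambda_p$ correction in the virtual-configuration formula. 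Once that identification is in hand, the rest is the transversality argument of Section~\ref{construction} plus the already-established fact that any $y^t$ can be moved in its lineality space to a genuine (non-virtual) positive configuration.
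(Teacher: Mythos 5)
Your proposal takes a substantially different route from the paper's, and I believe it has a genuine gap. The paper's proof is short and leans entirely on Proposition~\ref{wellbehaved} (the cluster-fibration lifting): one lifts $y^t$ to a single $\cK_{>0}$-point $y$ of $\Conf_n V$ giving vectors $v_1,\dots,v_n$, and then lifts \emph{both} $x^t$ and $x'^t$ to $\cK_{>0}$-points $x,x'$ of $\Conf_n\A$ satisfying $\pi(x)=\pi(x')=y$. With this done, the constructed lattices $\tilde{x}_i$ and $\tilde{x}'_i$ literally contain the same vector $v_i$ as a valuation-minimizing generator, and ``contains $v_i$ as a generator'' is precisely the condition for lying in the $M(\cK)A(\cO)N(\cK)$-orbit for $P_{v_i}$. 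Your argument never invokes this lifting, and as a result never establishes that the two configurations can be arranged to share literally the same generating vectors $v_i$; knowing that $\pi^t(x^t)=\pi^t(x'^t)$ only says the tropical Pl\"ucker data agree, not that the underlying $\cK$-vectors (or even the lines $[v_{i1}]$ and $[v'_{i1}]$ in $V(\cK)$) can be taken equal.

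More concretely, the claim that ``the ambiguity in the lift is precisely the action of the kernel of $\psi=\omega_1^n$'' is false, and this misidentification propagates through the rest of the argument. For $k\geq 3$ the fiber of $\pi$ over a point of $\Conf_n V$ has dimension $n\bigl(\dim\A-\dim V\bigr)=n\bigl(k^2-1-\binom{k}{2}-k\bigr)$, which already at $k=3$ is $2n$, while $\ker(\omega_1)\subset H$ contributes only $n(k-2)=n$ per the virtual-coweight slot; the Theorem immediately following the proposition (configurations varying over whole subsets of horocycles) makes this clear. Relatedly, the $A$-factor in $P=MAN$ for the stabilizer of a line is a rank-one torus, so the ``$A_i(\cK)/A_i(\cO)$ degree of freedom'' you want to absorb is one-dimensional and cannot be matched against the $\ker(\omega_1)$-ambiguity either. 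Changing the formal coweight $\lambda_i$ in a virtual configuration and translating the lattice $x_i$ along its horocycle are genuinely different operations, and conflating them is where the argument breaks. The fix is to bring in Proposition~\ref{wellbehaved}: once $x$ and $x'$ sit over the same $y\in\Conf_n V(\cK_{>0})$, no $H^n$-bookkeeping is needed at all.
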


\begin{proof} In fact, we will show something slightly stronger. Let $x^t \in \Conf_n \A(\Z^t)$ be the tropical point corresponding to the configuration $\{x_i\}$, and let $x'^t \in \Conf_n \A(\Z^t)$ be the tropical point corresponding to the configuration $\{x'_i\}$.

Lift $y^t$ to a point $y \in \Conf_n V(\cK_{>0})$. Let $y$ give the configuration of vectors $v_1, \dots, v_n \in \cK^k$. By Proposition \ref{wellbehaved} and its corollaries, we have that there exist $x, x' \in \Conf_n \A(\cK_{>0})$ lifting $x^t, x'^t \in \Conf_n \A(\Z^t)$ such that $\pi(x)=\pi(x')=y$.

Then we can construct from $x$ a positive configuration of points in the affine Grassmannian as in Section \ref{construction}. Call this configuration $\{\tilde{x_i}\}$. Similarly, we can construct from $x'$ the configuration $\{\tilde{x'_i}\}$. By construction, $v_i$ is a valuation minimizing vector in both $\tilde{x_i}$ and $\tilde{x'_i}$. Therefore it is a generator for both $\tilde{x_i}$ and $\tilde{x'_i}$.

It is not hard to check that all lattices which contain a vector $v$ as a generator lie in a semi-infinite orbit for $M(\cK)A(\cO)N(\cK)$, where $P=MAN$ is the stabilizer of $v$. Therefore, for each $i$, $\tilde{x_i}$ and $\tilde{x'_i}$ both lie in the horocycle for the parabolic $P_{v_i}$, where $P_{v_i}$ is the stabilizer of $v_i$.

Clearly $\{\tilde{x_i}\}$ and $\{\tilde{x'_i}\}$ give configurations in the building which are equivalent to $\{x_i\}$ and $\{x'_i\}$, respectively.
\end{proof}

The proof above can be extended to any finite number of higher laminations. Now let $y^t \in \Conf_n V(\Z^t)$. We can consider all the higher laminations representing $y^t$. We can then consider the subset of those higher laminations that are non-virtual, i.e. consist of actual positive configurations of $n$ points in the affine building. This set may be empty, but by moving $y^t$ in its lineality space, we can guarantee that there are some. We then have the following:

\begin{theorem} All the non-virtual laminations realizing $y^t$ can be realized by positive configurations $\{x_i\}$, where each $x_i$ is allowed to vary over some subset of a horocycle for a parabolic $P_{v_i}$. This parabolic $P_{v_i}$ is constructed as in the previous proposition.
\end{theorem}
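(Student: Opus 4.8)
The plan is to bootstrap from the previous proposition, which already handles the case of two (or any fixed finite number of) higher laminations representing $y^t$. First I would fix a lift $y \in \Conf_n V(\cK_{>0})$ with $-\val(y) = y^t$, giving a configuration of vectors $v_1, \dots, v_n \in \cK^k$ whose determinants $\det((v_i)_{i \in I})$ are valuation-minimizing and have positive leading coefficient. Then, given \emph{any} non-virtual higher lamination $\{x_i\}$ realizing $y^t$, I would invoke Proposition~\ref{wellbehaved} and its corollaries to lift the corresponding $x^t \in \Conf_n \A(\Z^t)$ to a point $x \in \Conf_n \A(\cK_{>0})$ with $\pi(x) = y$, using \emph{the same} $y$ each time. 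The construction of Section~\ref{construction} then builds from $x$ a positive configuration $\{\tilde{x_i}\}$ in the affine Grassmannian, equivalent to $\{x_i\}$, in which each $v_i$ is a valuation-minimizing vector and hence a genuine generator of the lattice $\tilde{x_i}$. As observed in the previous proof, any lattice containing $v_i$ as a generator lies in the semi-infinite orbit of $M(\cK)A(\cO)N(\cK)$, i.e.\ in the horocycle for $P_{v_i}$; so every non-virtual lamination realizing $y^t$ is equivalent to one of the desired form.

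The remaining content of the statement is the converse direction and the precise identification of the ``subset of the horocycle'' over which $x_i$ ranges. For the converse I would argue that if $\{x_i\}$ is any positive configuration with each $x_i$ in the horocycle $\mathrm{Hor}(P_{v_i})$ and containing $v_i$ as a generator, then by the metric formula of Theorem~\ref{basicfn} the value $P_I^t = f_{1\cdots 1}^t((x_i)_{i \in I}) = \min_p \sum_{i \in I} d_1(p, x_i)$ is bounded above by the value obtained from the vectors $v_i$, namely $-\val(\det((v_i)_{i\in I})) = P_I^t(y^t)$; equality holds because $v_i \in x_i$ forces $f_{1\cdots1}^t((x_i)_{i\in I}) \le -\val\det((v_i)_{i\in I})$ and positivity (the leading coefficient being positive) prevents any strict inequality collapse across a triangulation. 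Hence such a configuration realizes exactly $y^t$. The ``subset'' of the horocycle is thus cut out by two requirements: that $v_i$ remain a generator (not merely an element) of $x_i$, and that the whole collection $\{x_i\}$ be a positive configuration in the sense of Definition~\ref{posconfig}; I would state it at this level of explicitness rather than trying to give a closed-form description.

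The main obstacle, I expect, is the claim that \emph{every} non-virtual lamination realizing $y^t$ is equivalent to one of this form, not just that such forms exist. The subtlety is that the previous proposition only moves one lamination at a time into the horocycle picture relative to a chosen $y$; to get a clean statement one must check that the auxiliary choice of lift $y$ does not matter, i.e.\ that different valuation-minimizing lifts $v_1, \dots, v_n$ of $y^t$ give the \emph{same} parabolics $P_{v_i}$ and the same horocycles. This follows because the leading coefficient (not just the valuation) of each $v_i$ is pinned down by the positivity conditions up to the torus action, so the line $[v_i]$, and hence $P_{v_i}$, is determined by $y^t$ up to the lineality-space ambiguity that the theorem already absorbs by ``moving $y^t$ in its lineality space.'' I would therefore open the proof by normalizing: fix once and for all a lift $y$ of $y^t$, note that changing $y$ rescales each horocycle by an element of $A(\cK)$ but does not change which horocycle (as a subset of the building) we land in after the lineality adjustment, and then run the argument of the first paragraph uniformly over all non-virtual representatives.
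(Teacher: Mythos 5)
Your first paragraph matches the paper's argument, which is indeed just the observation that the preceding proposition's proof works for any finite collection of laminations once the lift $y$ (and hence the vectors $v_i$ and parabolics $P_{v_i}$) is fixed at the outset; the paper says no more than ``the proof above can be extended.'' The substantive issue is in your second paragraph, where the inequality is reversed. By definition $f^t_{1\cdots1}((x_i)_{i\in I})$ is the \emph{maximum} of $-\val(\det((u_i)_{i\in I}))$ as the $u_i$ range over the lattices $x_i$; since $v_i \in x_i$, this gives $f^t_{1\cdots1}((x_i)_{i\in I}) \geq -\val(\det((v_i)_{i\in I})) = P_I^t(y^t)$, not $\leq$. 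Positivity of the configuration does not force equality, and your proposed characterization of the ``subset of the horocycle'' (that $v_i$ is a generator and $\{x_i\}$ is positive) is accordingly too weak: a positive configuration with $v_i$ merely a generator could be valuation-minimized by other vectors, in which case the $P_I^t$ are strictly larger and $\{x_i\}$ does not realize $y^t$. The paper's remark after the theorem makes precisely this point — the additional constraint is that each $v_i$ must be a \emph{valuation-minimizing} vector with respect to the configuration, not just a generator. Finally, the third-paragraph concern about the dependence on the lift $y$ is not a real gap: the theorem stipulates the parabolics ``as in the previous proposition,'' i.e., relative to a single chosen lift $y$, and claims only existence of such a realization, not canonicity of the $P_{v_i}$.
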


Let us remark that each $x_i$ varies over some strict subset of the horocycle for $v_i$. This is because $v_i$ not only needs to be a generator for $x_i$ (which tells us that it lies in the horocycle), but also needs to be a valuation minimizing vector with respect to the configuration.

In a sense, then, for a tropical point $y^t$, we can think of the higher laminations representing it as a \emph{configuration of horocycles in the building}. Finally, we are then justified in making the following definition.

\begin{definition}\label{horolam} Let $y^t \in \Conf_n V(\Z^t)$. The equivalence class of higher laminations given by the set $(\pi^t)^{-1}(y^t) \subset \Conf_n \A(\Z^t)$ is a horocycle lamination. By construction, horocycle laminations are in bijection with tropical points of $\Conf_n V$.
\end{definition}

\section{Distinguished representatives}

\subsection{Heuristics} We would now like to present a dual realization of the points of $\Conf_n V(\Z^t)$. This subsection is motivational for, but mostly independent of, Section 6.2 in which we state and prove our main theorem. With that Theorem established, Section 7 continues with the story line developed in this subsection. 

In the previous section, we used the fibration 
$$\pi: \Conf_n \A \twoheadrightarrow \Conf_n V$$
to represent tropical points in $\Conf_n V(\Z^t)$ as equivalence classes of tropical points in $\Conf_n \A(\Z^t)$. The map $\pi$ gives us a natural map on functions
\begin{equation} \label{inclusion}
\pi^*: \cO(\Conf_n V) \hookrightarrow \cO(\Conf_n \A).
\end{equation}
We will set out to describe how $\pi^*$ can be used to realize $\Conf_n V(\Z^t)$ as a subset of $\Conf_n \A(\Z^t)$.

As a preliminary step, we will need to define some auxiliary spaces. Recall that the principal flag variety $\A$ is the quotient $G/U$ for $G=\SL_k$. There is a related space $\A'=G'/U$, the principal flag variety associated to the adjoint group $G'=\PGL_k$, as well as a map
$$\phi: \A \twoheadrightarrow \A'.$$
This map is the quotient by the action of the center of $\SL_k$, which is given by the $k$-th roots of unity. Similarly, there is a map
$$\phi: V \twoheadrightarrow V'$$
also given by the quotient by the action of $k$-th roots of unity. 

\begin{rmk}
We can give a Lie theoretic definition of $V'$ that generalizes to other partial flag varieties and other semi-simple groups $G$. Let $G$ be simply connected, and let $G'$ be the adjoint group. For any parabolic $P \subset G$ there is a corresponding parabolic $P' \subset G'$. The flag varieties $G/P$ and $G/P'$ are naturally isomorphic. Let $P=MAN$ and $P'=MA'N$ be the Langlands decompositions. The factors $M$ and $N$ in each decomposition are isomorphic. However $A'$ is the quotient of $A$ by the center. The associated principal flag varieties are $G/MN$ and $G'/MN$. The latter is the quotient of the former by the center. Let us note that when $P$ is the stabilizer of a vector, the spaces $V$ and $G/MN$ are not exactly the same. The spaces differ only in codimension at least $2$, so the distinction plays no role for us.
\end{rmk}

We then have four different spaces, related as follows:
\[
\begin{tikzcd}
\Conf_n \A \arrow[d, twoheadrightarrow] \arrow[r, twoheadrightarrow] & \Conf_n \A' \arrow[d, twoheadrightarrow] \\
\Conf_n V \arrow[r, twoheadrightarrow] & \Conf_n V'
\end{tikzcd}
\]

Let us say a few words about the sets $\Conf_n \A'(\Zt)$ and $\Conf_n V'(\Zt)$. Note that the surjection $\A \twoheadrightarrow \A'$ is a $k : 1$ surjection. Therefore the map $\A^n \to (\A')^n$ is $k^n:1$. Quotienting by the diagonal action of roots of unity shows that the map $\Conf_n \A \twoheadrightarrow \Conf_n \A'$ is $k^{n-1}:1$. Pulling back gives an inclusion $\cO(\Conf_n \A') \hookrightarrow \cO(\Conf_n \A)$. The fact that there are fewer functions on $\Conf_n \A'$ means that it is \emph{easier} to be an integral tropical point. Thus there are inclusions
$$\Conf_n \A(\Zt) \subset \Conf_n \A'(\Zt),$$
$$\Conf_n V(\Zt) \subset \Conf_n V'(\Zt),$$
and in both cases the former has index $k^{n-1}$ in the latter.
Let us note that whereas points of $\Conf_n \A(\Zt)$ and $\Conf_n V(\Zt)$ are given by configurations of points in the affine building coming from the affine Grassmannian for $\SL_k$, points of $\Conf_n \A'(\Zt)$ and $\Conf_n V'(\Zt)$ are given by configurations of points in the affine building coming from the affine Grassmannian for $\PGL_k$. To summarize, we have relationships between various tropical spaces as in the following diagram:

\[
\begin{tikzcd}
\Conf_n \A(\Z^t) \arrow[d, twoheadrightarrow] \arrow[r, hook] & \Conf_n \A'(\Z^t) \arrow[d, twoheadrightarrow] \\
\Conf_n V(\Z^t)  \arrow[r, hook] & \Conf_n V'(\Z^t) 
\end{tikzcd}
\]

Now the duality conjectures state that there is a cone \eqref{eq:hiveinequalities} of tropical points
$$\Conf^+_n \A'(\Zt) \subset \Conf_n \A'(\Zt)$$
which parameterizes a basis of global functions in $\cO(\Conf_n \A)$.

Let us formulate a natural analogue of this statement for the space $\Conf_n V$. To a first approximation, the duality conjectures state that there should be a cone of tropical points
$$\Conf^+_n V'(\Zt) \subset \Conf_n V'(\Zt),$$
parameterizing a basis of global functions in $\cO(\Conf_n V)$. There are also analogous statements where the pairs $\A, \A'$ and $V, V'$ are interchanged, but for simplicity, we choose not to deal with them here.

This above statement needs to be modified slightly. First, let us denote by $\cO'(\Conf_n V)$ the algebra obtained from $\cO(\Conf_n V)$ by inverting the frozen varibables. (We note that the ring $\cO'(\Conf_n V)$ is the homogeneous coordinate ring of the {\it top positroid cell} for $\Gr(k,n)$). The $\X$-space associated to $\Conf_n V$ is $\Conf_n \mathbb{P}^{k-1}$, i.e. the configuration space of $n$-tuples in $\mathbb{P}^{k-1}$ (we elaborate on this in Section~8). Then the usual formulation of the duality conjectures is that the tropical points in $\Conf_n \mathbb{P}^{k-1}(\Zt)$ parameterize a basis for the functions in $\cO'(\Conf_n V)$ which are invariant under the action of $T^n$ on $\Conf_n V$, i.e., the functions which only depend on the lineality space of a point in $\Conf_n V$.

There are two steps to extending this statement. First, we have to analyze the map 
$$\Conf_n V' \rightarrow \Conf_n \mathbb{P}^{k-1}.$$ 
We will consider a related space $\Conf_n^* V' $ whose functions contain certain prescribed monomials in the frozen variables. We will postpone describing the space $\Conf_n^* V'$ for the moment, but let us mention that the spaces $\Conf_n^* V'$ and $\Conf_n V'$ have the same positive real points, i.e., $\Conf_n^* V'(\R_{>0}) \simeq \Conf_n V'(\R_{>0})$. The result will be that on the level of \emph{integral} tropical points, we have that
$$\Conf_n^* V'(\Zt) \subset \Conf_n V'(\Zt),$$
so that $\Conf_n^* V'(\Zt)$ can be thought of as a finite index sublattice of $\Conf_n V'(\Zt).$ To put it another way, the spaces $\Conf_n^* V'(\Qt)$ and $\Conf_n V'(\Qt)$ are isomorphic, but have different integral structures. Then the set $\Conf_n^* V'(\Zt)$ will parameterize a basis of functions in $\cO'(\Conf_n V)$. Now, if we ask that these functions extend across the divisors given by the frozen variables, then we will get a cone
$$\Conf_n^{*,+} V'(\Zt) \subset \Conf_n^* V'(\Zt)$$
which parameterizes a basis of global functions in $\cO(\Conf_n V)$.

Thus, we expect that the map in equation \eqref{inclusion} induces an inclusion
\begin{equation} \label{tropinclusion}
\Conf^{*,+}_n V'(\Zt) \subset \Conf^+_n \A'(\Zt).
\end{equation}

We then get a relationship between tropical spaces as follows:

\[
\begin{tikzcd}
\Conf_n^+ \A(\Z^t) \arrow[r, hook] & \Conf^+_n \A'(\Z^t)  \\
\Conf_n^{*,+} V(\Z^t) \arrow[u, hook] \arrow[r, hook] & \Conf_n^{*,+} V'(\Z^t) \arrow[u, hook]
\end{tikzcd}
\]

There is a similar diagram without the superscripts $+$. On the level of rational tropical points, $\Conf_n^* V'(\Qt) \simeq \Conf_n V'(\Qt)$ and $\Conf_n^* V(\Qt) \simeq \Conf_n V(\Qt)$, so that combining the previous two commutative diagrams, we have:

\[
\begin{tikzcd}
\Conf_n \A(\Q^t) \arrow[d, twoheadrightarrow, shift right=1ex] \arrow[r, hook] & \Conf_n \A'(\Q^t) \arrow[d, twoheadrightarrow, shift right=1ex] \\
\Conf_n V(\Q^t) \arrow[u, hook, shift right=1ex] \arrow[r, hook] & \Conf_n V'(\Q^t) \arrow[u, hook, shift right=1ex]
\end{tikzcd}
\]

Now let us recall some more about how $\Conf^+_n \A'(\Z^t)$ parameterizes functions in $\cO(\Conf_n \A)$. Suppose that $x^t \in \Conf^+_n \A'(\Z^t)$ is given by a configuration $x_1, x_2, \dots, x_n$. Let $d(x_{i-1}, x_i) = \lambda_i$, where indices are taken cyclically$\mod n$. Then $x^t $ will parameterize a function that lies in the invariant space 
$$[V_{\lambda_1}^* \otimes V_{\lambda_2}^* \otimes \cdots \otimes V_{\lambda_n}^*]^G.$$

On the other hand, the functions in $\cO(\Conf_n V)$ are given by invariants in the space
$$[V_{\lambda_1}^* \otimes V_{\lambda_2}^* \otimes \cdots \otimes V_{\lambda_n}^*]^G$$
where each $\lambda_i$ is a multiple of the fundamental weight $\omega_1$ for $SL_k$. Therefore, it is natural to expect that the inclusion in equation \eqref{tropinclusion} is given by the subset of configurations $x_1, x_2, \dots, x_n$ where for each $i$, $d(x_{i-1}, x_i)$ is a multiple of $\omega_1$.

\begin{rmk} For the remainder of this paper, we consider higher laminations for the group $PGL_k$, which correspond to configurations in the affine building for $PGL_k$. In the affine building for $PGL_k$, distances are given by coweights for $PGL_k$, which we will view as weights for $SL_k$. The fundamental weight $\omega_i$ for $SL_k$ is $(1, \dots, 1, 0, \dots, 0)$, where the vector contains $i$ $1$'s.
\end{rmk}

\subsection{Statement of main theorem and key lemma}
We can now formulate the main statement of this section.

\begin{theorem}\label{distinguished} There is a cone of points $\Conf^+_n V'(\Qt) \subset \Conf_n V'(\Qt)$ with the following properties:
\begin{enumerate}
\item Each $y^t \in \Conf^+_n V'(\Qt)$ is represented by a \emph{unique} higher lamination $\{x_i\}$ such that $\lambda_i := d(x_{i-1}, x_i)$ is a rational multiple of $\omega_1$. Moreover, $y^t \in \Conf_n V'(\Qt)$ is represented by such a higher lamination if and only if $y^t \in \Conf^+_n V'(\Qt)$.
\item Any point $y^t \in \Conf_n V'(\Qt)$ is in the same lineality space as some point $y'^t \in \Conf^+_n V'(\Qt)$.
\item There are explicit inequalities (cf.~\eqref{ineq}) which cut out the cone $\Conf^+_n V'(\Qt)$ inside $\Conf_n V'(\Qt)$.
\end{enumerate}
\end{theorem}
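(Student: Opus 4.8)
The strategy is to study the fiber of the surjection $\pi^t\colon\Conf_n\A'(\Qt)\twoheadrightarrow\Conf_n V'(\Qt)$ over a point $y^t$ and to isolate, inside that fiber, the non-virtual higher laminations whose consecutive distances are multiples of $\omega_1$. The first step is to choose coordinates adapted to $\pi$: by Proposition~\ref{posfib}(3) the map $\pi$ is a cluster fibration, so there is a cluster $\{g_j\}$ of Pl\"ucker coordinates on $\Conf_n V'$ and a cluster $\{f_i\}$ of edge and face functions on $\Conf_n\A'$ with $\{\pi^*g_j\}\subseteq\{f_i\}$, and in the induced tropical coordinates $\pi^t$ is simply the projection forgetting the coordinates in $\{f_i\}\setminus\{\pi^*g_j\}$; hence $(\pi^t)^{-1}(y^t)$ is an affine coordinate subspace. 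On this subspace two conditions cut out the distinguished locus. First, the requirement that each consecutive distance $d(x_{i-1},x_i)$ be a multiple of $\omega_1$ becomes, via the Remark identifying edge functions with distances, the system of linear equalities $f_{1,k-1,0}^t(x_{i-1},x_i)=f_{2,k-2,0}^t(x_{i-1},x_i)=\cdots=f_{k-1,1,0}^t(x_{i-1},x_i)$ over the boundary edges. Second, non-virtuality is the hive inequality $\W^t\geq 0$ of \eqref{eq:hiveinequalities}. I would then \emph{define} the cone $\Conf^+_n V'(\Qt)$ to be the image of this polyhedral set under $\pi^t$; the explicit inequalities \eqref{ineq} follow by eliminating the forgotten coordinates (Fourier--Motzkin), so that part~(3) becomes a formal consequence of parts~(1) and~(2) once one notes that $\W^t$ is a finite maximum of linear forms.

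For the ``if'' direction of part~(1) I would build the distinguished representative by hand. Given $y^t$ satisfying the inequalities, lift it to $y\in\Conf_n V'(\cK_{>0})$ with $-\val(y)=y^t$. The heart of the matter is to produce a lift $x\in\Conf_n\A'(\cK_{>0})$ with $\pi(x)=y$ whose associated building configuration $\{x_i\}$, constructed as in Section~\ref{construction}, has $d(x_{i-1},x_i)\in\Q_{\geq 0}\,\omega_1$ for every $i$; this is the content of the key lemma. I expect the key lemma to be proved by induction on $n$, peeling off a flag adjacent to a boundary edge through the forgetful cluster fibrations $\pi_i$ of Proposition~\ref{posfib}(1)--(2): after forgetting the flag a new boundary edge is exposed, and one adjusts the configuration within the $\pi_i^t$-fiber so that this edge becomes $\omega_1$-type, the adjustment being feasible precisely when the hive-type inequalities hold. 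Each feasibility check reduces, via Theorem~\ref{basicfn}, to an elementary inequality between minima of sums of the metric functions $d_a(p,\cdot)$ on the affine building. Conversely, if $y^t$ admits \emph{any} non-virtual representative with $\omega_1$-multiple distances, then reading these inequalities backwards shows $y^t$ satisfies \eqref{ineq}, which is the ``only if'' direction.

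Uniqueness is the convex-geometric core, and the hardest point. The claim is that the $\omega_1$-constraint rigidifies the fiber: although $(\pi^t)^{-1}(y^t)$ is positive-dimensional, once the boundary edge functions along each edge are forced equal, the remaining ``internal'' coordinates --- the face functions and diagonal edge functions of the triangulation --- are determined by the Pl\"ucker data $\{P_I^t(y^t)\}$. I would prove this by exhibiting, for each internal cluster coordinate $f$, an explicit piecewise-linear formula for $f^t$ in terms of the $P_I^t$ valid on the $\omega_1$-locus. Geometrically such formulas exist because when all consecutive points of $\{x_i\}$ lie at minuscule distance, the geodesic convex hull of the configuration in the building is combinatorially simple --- assembled from pieces of apartments along minuscule geodesics --- so the metric functions $f_{i_1\cdots i_m}^t$ of Theorem~\ref{basicfn} collapse to explicit $\max/+$ combinations of the functions $f_{1\cdots 1}^t=\pi^*P_I^t$. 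Having thereby pinned down all invariants $f_{i_1i_2i_3}^t$ of any distinguished representative from $y^t$ alone, uniqueness as a higher lamination is immediate from the definition (equal invariants means equal equivalence class), with no appeal to the open isometry conjecture of \cite{Le}.

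Finally, part~(2) is the standard lineality manoeuvre: the torus $T^n(\Qt)$ rescales the vectors of a configuration in $\Conf_n V'$, acting compatibly --- through $\psi=\omega_1^n\colon H^n\to T^n$ --- with the shift action of $H^n(\Qt)$ on $\Conf_n\A'(\Qt)$, and just as any higher lamination can be moved into the hive region by an element of $H^n(\Qt)$, one can choose an element of $T^n(\Qt)$ carrying $y^t$ into the region defined by \eqref{ineq}, since those inequalities constrain only the relative shape of the configuration of horocycles, not the individual scales. In summary, the main obstacle is the rigidity statement behind uniqueness: showing that the $\omega_1$-constraint kills every fiber direction of $\pi^t$ forces one to pass from the soft positivity/cluster formalism to a hands-on study of minuscule geodesics and convex hulls in the affine building, exactly the analysis flagged as ``the most nontrivial'' in the introduction.
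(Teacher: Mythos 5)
Your framework is the right one — cluster fibration, hive inequalities cutting out the non-virtual locus, lineality spaces — and you correctly identify the rigidity/uniqueness statement as the crux. But the route you propose through the hard middle differs from the paper in ways that open genuine gaps.

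First, the existence/``if'' direction. The paper does not peel off flags by induction on $n$. That induction has a structural problem: when you forget the flag $x_n$, the formerly internal edge between $x_{n-1}$ and $x_1$ becomes a boundary edge, and there is no reason for $d(x_{n-1},x_1)$ to be a multiple of $\omega_1$; you cannot impose it without changing the Pl\"ucker data on the remaining flags, so the smaller instance is not of the right shape. Instead, the paper works entirely tropically: from $y^t$ and the values $a_i$ extracted from the frozen variables via \eqref{frozens}, it \emph{defines} a candidate lift $x^t$ by imposing the linear relations of Lemma~\ref{key} (Equation~\eqref{keyeqn}), which pin down every cluster coordinate of $x^t$. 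The nontrivial point is consistency — that imposing \eqref{keyeqn} across all triangles gives one well-defined $x^t$ — and the paper proves it not by a separate argument but as a byproduct of the lineality analysis below.

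Second, your rigidity mechanism is softer than what is actually needed. You predict piecewise-linear, $\max/{+}$ formulas for internal coordinates in terms of Pl\"ucker data. On the $\omega_1$-locus the relations are in fact \emph{linear}: Lemma~\ref{lem:keyABC} shows that a single $\omega_1$-side forces the hive to degenerate into a puzzle with long flat strips, so each internal coordinate equals a boundary Pl\"ucker coordinate plus a prescribed multiple of $a/k$. Iterating this (Lemma~\ref{key}) across the triangulation gives closed linear formulas, hence uniqueness without Fourier--Motzkin and without any appeal to building geometry beyond the hive inequalities. The $\max/{+}$ picture you envisage would not directly give uniqueness, because a max of linear forms can be flat on a positive-dimensional set.

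Third — and this is the real gap — you treat part (2) as ``the standard lineality manoeuvre'' and simply assert that an element of $T^n(\Qt)$ carries any $y^t$ into the cone. That is precisely what needs proof, and it is where the paper's weight is. The paper introduces the distinguished laminations $l_i$ (corresponding under duality to frozen variables), computes their cluster coordinates in closed form, and proves the ``key observation'' that the bold segments of their convex hulls collectively account for exactly those hive inequalities that can fail for a general lift $x^t$; since each $l_i$ makes its segments strict, a large positive combination $\sum c_i l_i$ pushes $x^t$ into the hive cone. This simultaneously establishes part (2), the consistency of the lift $x^t$ (by linearity in $c_i$), and hence the ``if'' direction of part (1). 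Without some substitute for this observation, your lineality step is an unsupported claim, and the whole proof — existence, uniqueness, and the inequalities — is left hanging on it.

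Finally, your remark that part (3) is a ``formal consequence'' of (1) and (2) via Fourier--Motzkin is too quick. Once uniqueness is established, the restriction of $\pi^t$ to the $\omega_1$-locus is a linear bijection onto $\Conf_n V'(\Qt)$, so no elimination is actually performed; and identifying \eqref{ineq} as the complete, non-redundant set of inequalities requires the analysis (Figure~\ref{fig:hulltwosides} and the reduction, cited from \cite{GS}, to consecutive-vertex triangles) showing that these are the only hive inequalities that survive the $\omega_1$-constraint.
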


The goal of this section will be to give a proof of the above theorem.

\begin{rmk} For this section, because we are dealing with only rational tropical points of the space $\Conf^+_n V'$ (respectively $\Conf^+_n \A'$) it will not be important to distinguish between $V'$ and $V$ (respectively $\A'$ and $\A$). However, we will later deal with the integral points of these spaces, where we prefer to work with $V'$ and $\A'$ (though all statements can be appropriately modified to deal with $V$ and $\A$).
\end{rmk}

Let us first say a bit about why the above theorem is interesting. A point in $\Conf_n \A'(\Qt)$ can be described its tropical coordinates in some cluster chart. By the cluster fibration property, there is a cluster such that the coordinates on $\Conf_n V'$ are a strict subset of those on $\Conf_n \A'$. Theorem~\ref{distinguished} says that those configurations where each $d(x_{i-1}, x_i) = a_i \omega_1$ for some $a_i$ are determined by the tropical coordinates of this smaller subset, provided we also assume the hive inequalities hold. 
Thus the general strategy of our proof will be to analyze the interplay between the restriction that $d(x_{i-1}, x_i)= a_i \omega_1$ and the hive inequalities.

\medskip

Now we recall the hive inequalities. As in Section \ref{flags}, our first step is consider a configuration of three principal flags $A, B, C$, and let us consider the functions $f_{i_1i_2i_3}(A, B, C) = A^{i_1}B^{i_2}C^{i_3}$ where $i_1+i_2+i_3=k$. For simplicity--and because we will only be dealing with tropical points--throughout this section, we will notate the tropical functions as 
$A^{i_1}B^{i_2}C^{i_3}$ rather than $f^t_{i_1i_2i_3}(A, B, C)$ or $(A^{i_1}B^{i_2}C^{i_3})^t$. Then the hive inequalities state that the cone of points $\Conf^+_3 \A'(\Qt)$ (as well as $\Conf^+_3 \A'(\Zt)$) is cut out by the following three types of inequalities:
\begin{itemize}
\item $A^{i+1}B^{j+1}C^{k}+A^{i+1}B^{j}C^{k+1} \geq A^{i+2}B^{j}C^{k}+A^{i}B^{j+1}C^{k+1},$
\item $A^{i}B^{j+1}C^{k+1}+A^{i+1}B^{j+1}C^{k} \geq A^{i}B^{j+2}C^{k}+A^{i+1}B^{j}C^{k+1},$
\item $A^{i+1}B^{j}C^{k+1}+A^{i}B^{j+1}C^{k+1} \geq A^{i}B^{j}C^{k+2}+A^{i+1}B^{j+1}C^{k}.$
\end{itemize} 

These three sets of inequalities are associated with the flags $A, B, C$ respectively. By convention, we will always assume that the values at the boundary vertices (in this case, at $A,B,C$) are $0$, i.e. that $A^k=B^k=C^k=0$. If we associate the triples $(i_1,i_2,i_3)$ with the lattice points in a triangular array as in Figure~\ref{fig:Sl5seed}, then the three sets of inequalities above can be stated uniformly by requiring that, in each rhombus, the sum of the two obtuse angles is at least the sum of the two acute angles. 

A similar set of inequalities describes the cone $\Conf^+_n \A'(\Qt) \subset \Conf_n \A'(\Qt)$ when there are more than three flags. To write down these inequalities, choose any triangulation of the $n$-gon, and take the cluster associated to this triangulation. Then the hive inequalities from the various triangles cut out the desired cone. It is a fact that different triangulations give an equivalent set of inequalities. Moreover, each vertex of the $n$-gon has a subset of inequalities associated with it--namely the inequalities coming from each triangle that the vertex belongs to.

We can give another, more geometric interpretation of the hive inequalities, beginning again with the case of three flags. For each lattice point $(i_1,i_2,i_3)$ in the triangular array  we plot a point at a height of $A^{i_1}B^{i_2}C^{i_3}$ units above the lattice point. This is the graph of the tropical coordinates $A^{i_1}B^{i_2}C^{i_3}$ at the the inputs $(i_1,i_2,i_3)$. The convex hull says that the convex hull of these points should be modeled on Figure~\ref{fig:viewfromabove} (drawn in the case of $\SL_4$).

\begin{figure}[h]
\begin{tikzpicture}[scale=1]

  \node (x004) at (0,1.8) {};
  \node [above] at (x004) {$A$};

  \node (x103) at (-0.5,0.9) {};
  \node (x013) at (0.5,0.9) {};

  \node (x202) at (-1,0) {};
  \node (x112) at (0,0) {};
  \node (x022) at (1,0) {};

  \node (x301) at (-1.5,-0.9) {};
  \node (x211) at (-0.5,-0.9) {};
  \node (x121) at (0.5,-0.9) {};
  \node (x031) at (1.5,-0.9) {};

  \node [label=270:$B$] (x400) at (-2,-1.8) {};
  \node (x310) at (-1,-1.8) {};
  \node (x220) at (0,-1.8) {};
  \node (x130) at (1,-1.8) {};
  \node [label=270:$C$] (x040) at (2,-1.8) {};

  \draw [] (0,1.8) to (2,-1.8);
  \draw [] (-0.5,0.9) to (1,-1.8);
  \draw [] (-1,0) to (0,-1.8);
  \draw [] (-1.5,-0.9) to (-1,-1.8);

  \draw [] (2,-1.8) to (-2,-1.8);
  \draw [] (1.5,-0.9) to (-1.5,-0.9);
  \draw [] (1,0) to (-1,0);
  \draw [] (0.5,0.9) to (-0.5,0.9);

  \draw [] (-2,-1.8) to (0,1.8);
  \draw [] (-1,-1.8) to (0.5,0.9);
  \draw [] (0,-1.8) to (1,0);
  \draw [] (1,-1.8) to (1.5,-0.9);

\end{tikzpicture}
\caption{\label{fig:viewfromabove} Convex hull determined by hive inequalities for ${SL_4}$, in the generic case.}
\end{figure}
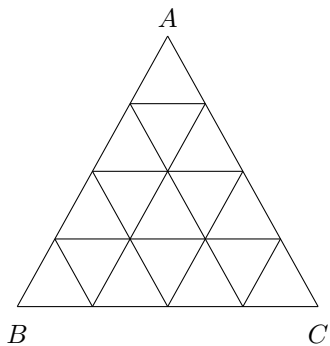

We recall our convention that the values at the vertices of the triangle are taken to be $0$. The picture in Figure~\ref{fig:viewfromabove} is the generic situation. When a given hive inequality happens to be an equality, it causes two adjacent triangles in the above diagram to become coplanar, and thus to become part of larger \emph{flat spaces} or what Knutson-Tao call \emph{puzzle pieces}. We will see examples of this now. 

Suppose that we have a positive configuration of three points $\{x_A, x_B, x_C\}$ where $d(x_B, x_{C}) = a \omega_1$. A straightforward calculation gives that $B^iC^{k-i} = \frac{ai}{k}$ as long as $0 < i < k$. We can interpet this as saying that in the corresponding convex hull, the points above the side $\overline{BC}$ lie on a line of slope $\frac{a}{k}$.   

The following Lemma \ref{lem:keyABC} will be important for us. It says that in the presence of the hive inequalities and the assumption $d(x_B, x_{C}) = a \omega_1$, the convex hull generically looks like Figure~\ref{fig:oneside}. Such a diagram is an example of a \emph{puzzle} in the sense Knutson-Tao. 
Each interior segment in Figure~\ref{fig:viewfromabove} corresponds to one of the hive inequalities; such a segment is the diagonal of a rhombus whose vertices are involved in the corresponding hive inequality. In a puzzle (cf. the ones in Figures \ref{fig:oneside}, \ref{fig:hulltwosides}, and \ref{fig:octagon}), the segments that {\it are} present correspond to inequalities that (could potentially) hold strictly, while missing segments correspond to inequalities that are actually equalities. These more general puzzles depict degenerate hives.

\begin{figure}[ht]
\begin{tikzpicture}[scale=1]

  \node (x004) at (0,1.8) {};
  \node [above] at (x004) {$A$};

  \node (x103) at (-0.5,0.9) {};
  \node (x013) at (0.5,0.9) {};

  \node (x202) at (-1,0) {};
  \node (x112) at (0,0) {};
  \node (x022) at (1,0) {};

  \node (x301) at (-1.5,-0.9) {};
  \node (x211) at (-0.5,-0.9) {};
  \node (x121) at (0.5,-0.9) {};
  \node (x031) at (1.5,-0.9) {};

  \node [label=270:$B$] (x400) at (-2,-1.8) {};
  \node (x310) at (-1,-1.8) {};
  \node (x220) at (0,-1.8) {};
  \node (x130) at (1,-1.8) {};
  \node [label=270:$C$] (x040) at (2,-1.8) {};

  \draw [] (0,1.8) to (2,-1.8);
  \draw [] (-0.5,0.9) to (0,0);
  \draw [] (-1,0) to (-0.5,-0.9);
  \draw [] (-1.5,-0.9) to (-1,-1.8);

  \draw [] (2,-1.8) to (-2,-1.8);
  \draw [] (1.5,-0.9) to (-1.5,-0.9);
  \draw [] (1,0) to (-1,0);
  \draw [] (0.5,0.9) to (-0.5,0.9);

  \draw [] (-2,-1.8) to (0,1.8);
  \draw [] (-1,-1.8) to (0.5,0.9);

\end{tikzpicture}
\caption{\label{fig:oneside} Generic convex hull when the bottom side satisfies $d(x_B, x_{C}) = a \omega_1$.}
\end{figure}
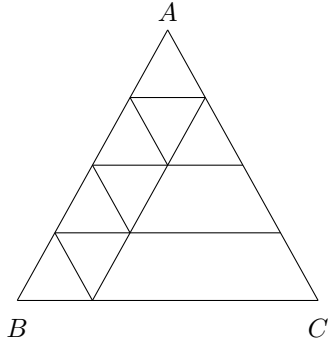

\begin{lemma}\label{lem:keyABC}
Let $\{x_A, x_B, x_C\}$ be a positive configuration of three points in the affine building where $d(x_B, x_C) = a \omega_1.$ Then the tropical coordinates obey
$$A^{i_1}B^{i_2}C^{i_3} = A^{i_1}C^{i_2+i_3} + \frac{ai_2}{k}$$
whenever $i_3 > 0$.
\end{lemma}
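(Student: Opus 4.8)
### Proof strategy for Lemma \ref{lem:keyABC}

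The plan is to combine the metric formula of Theorem~\ref{basicfn} with the hive inequalities associated to the vertex $A$. Recall that by Theorem~\ref{basicfn},
\[
A^{i_1}B^{i_2}C^{i_3}=f_{i_1i_2i_3}^t(x_A,x_B,x_C)=\min_{p}\left(d_{i_1}(p,x_A)+d_{i_2}(p,x_B)+d_{i_3}(p,x_C)\right),
\]
and similarly $A^{i_1}C^{i_2+i_3}=\min_p\left(d_{i_1}(p,x_A)+d_{i_2+i_3}(p,x_C)\right)$. The inequality
\[
A^{i_1}B^{i_2}C^{i_3}\le A^{i_1}C^{i_2+i_3}+\tfrac{a i_2}{k}
\]
should follow directly by plugging the minimizer $p$ for the right-hand side into the left-hand side and using the computation (already noted in the text before the lemma) that for $d(x_B,x_C)=a\omega_1$ one has $B^iC^{k-i}=\tfrac{ai}{k}$; more precisely, the triangle-inequality-type bound $d_{i_2}(p,x_B)\le d_{i_2}(p,x_C)+\tfrac{a i_2}{k}$ coming from $d(x_B,x_C)=a\omega_1$. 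So the easy direction is ``$\le$''; the content is the reverse inequality, equivalently that the hive inequalities force equality.

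First I would reduce to a statement purely about the triangular array of numbers $N_{i_1i_2i_3}:=A^{i_1}B^{i_2}C^{i_3}$ satisfying the three families of rhombus (hive) inequalities, with the additional hypothesis that along the edge $BC$ the values are linear: $N_{0,i_2,i_3}=\tfrac{a i_2}{k}$ (using the boundary normalization $B^k=C^k=0$, i.e. $N_{0,k,0}=N_{0,0,k}=0$). The claim to prove is then $N_{i_1,i_2,i_3}=N_{i_1,0,i_2+i_3}+\tfrac{a i_2}{k}$ whenever $i_3>0$; geometrically this says the convex hull of the lifted points is flat (coplanar) over the entire sub-triangle lying strictly to the $C$-side of the segment $\overline{A,\,BC\text{-midpoint-direction}}$, matching Figure~\ref{fig:oneside}. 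The natural inductive parameter is $i_3$ (the distance from the edge $AB$), or equivalently one inducts on the rows of rhombi moving away from edge $BC$.

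The key step is the induction: knowing linearity on one ``row'' parallel to $\overline{BC}$, use the hive inequality of type
\[
A^{i}B^{j}C^{k+1}+A^{i}B^{j+1}C^{k}\ \text{vs.}\ \text{its partners}
\]
— specifically the family associated to vertex $A$, which relates $N_{i_1,i_2,i_3}$ to $N_{i_1,i_2+1,i_3-1}$, $N_{i_1,i_2-1,i_3+1}$ and $N_{i_1+1,i_2,i_3-1}$ appropriately — to propagate the equality inward. One direction of each rhombus inequality gives the ``$\le$'' we already have from the metric argument; the hive inequality gives the matching ``$\ge$'', and since the base case (the edge $BC$, where $i_3$ can be made $0$... careful: we need $i_3>0$, so the genuine base is the edge $AB$ together with the near-$BC$ rows) is linear by hypothesis, the two bounds pin down every interior value. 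I would set this up so that at each stage the already-established linear relation on the previous row, fed into a rhombus straddling that row and the next, forces the next row to be linear as well.

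The main obstacle I anticipate is bookkeeping: correctly identifying which of the three hive families to invoke at each step and checking that the rhombus whose inequality one uses actually has all four of its vertices inside the triangle (boundary effects near the corners $A$ and near the edges $AB$, $AC$). A secondary subtlety is making the ``$\le$'' direction from Theorem~\ref{basicfn} fully rigorous: one must justify that $d(x_B,x_C)=a\omega_1$ yields the clean inequality $d_{i_2}(p,x_B)\le d_{i_2+i_3}(p,x_C)-d_{i_3}(p,x_C)+\tfrac{a i_2}{k}$ (or the appropriate form), which uses the triangle inequality for the coweight-valued metric together with the fact that $\omega_1$ is minuscule so the relevant quantities behave additively. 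Once the one-sided bound and the inductive propagation of equality via the hive inequalities are both in hand, the lemma follows; I expect the write-up to be short, with the diagram in Figure~\ref{fig:oneside} serving as the guiding picture.
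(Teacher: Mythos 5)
Your high-level plan — reduce to a combinatorial statement about a hive array whose $BC$-edge is linear with slope $a/k$, and then propagate that linearity inward one row at a time — matches the paper's strategy. And your metric "$\le$" direction does go through, provided you take $p=x_C$ specifically (which is a minimizer of the right-hand side by the edge-function remark, since $A^{i_1}C^{i_2+i_3}=d_{i_1}(x_C,x_A)$); with $p=x_C$ one gets $d_{i_2}(x_C,x_B)=\tfrac{ai_2}{k}$ and $d_{i_3}(x_C,x_C)=0$, so the bound drops out immediately. But your claim that the bound follows from an \emph{arbitrary} minimizer $p^*$ is not correct: for a generic $p^*$ one would need $(\omega_{i_2}+\omega_{i_3}-\omega_{i_2+i_3})\cdot d(p^*,x_C)\le 0$, and for dominant $d(p^*,x_C)$ this quantity is $\ge 0$, so the inequality would run the wrong way.

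The more serious issue is which hive family you invoke. You propose to use the rhombi "associated to the vertex $A$" and write down the quadruple $N_{i_1,i_2,i_3}$, $N_{i_1,i_2+1,i_3-1}$, $N_{i_1,i_2-1,i_3+1}$, $N_{i_1+1,i_2,i_3-1}$ — three points in row $i_1$ and one in row $i_1+1$. That is not a rhombus. The actual $A$-family rhombi span three rows ($i_1$, $i_1+1$, $i_1+2$, with vertices $(i_1+2,j,k')$, $(i_1+1,j+1,k')$, $(i_1+1,j,k'+1)$, $(i_1,j+1,k'+1)$), and those inequalities are precisely the ones that are \emph{not} forced to be equalities under the hypothesis $d(x_B,x_C)=a\omega_1$: they correspond to the segments that remain in Figure~\ref{fig:oneside}, i.e.\ the remaining degrees of freedom along edge $AC$. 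The families you actually need are those associated to $B$ and $C$, which straddle two consecutive rows. The paper's proof uses exactly these: a type-$C$ rhombus gives $y\le x+\tfrac{a}{k}$ and a type-$B$ rhombus gives $y\ge x+\tfrac{a}{k}$, so both directions of the equality come from the hive inequalities alone, no metric input required. If you fix the family, your induction will close, but the metric step becomes redundant; the paper's squeeze between the $B$- and $C$-type rhombi is the cleaner route.
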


Put a different way, Lemma~\ref{lem:keyABC} says that each of the long horizontal segments in Figure~\ref{fig:oneside} is a line of slope $\frac{a}{k}$.  

\begin{proof} This is a straightforward consequence of the hive inequalities and the fact that $B^iC^{k-i} = \frac{ai}{k}$. For example, consider the three triangles in the bottom right fragment of Figure~\ref{fig:viewfromabove}: 
\begin{equation*}
\begin{tikzpicture}
\draw (2,0)--(0,0)--(.5,.866)--(1.5,.866)--(2,0);
\draw (.5,.866)--(1,0)--(1.5,.866);
\node at (2,-.3) {0};
\node at (1,-.3) {$\frac{a}{k}$};
\node at (0,-.3) {$\frac{2a}{k}$};
\node at (2,-.3) {0};
\node at (.5,1.1) {$y$};
\node at (1.5,1.1) {$x$};
\end{tikzpicture}.
\end{equation*}
These three triangles yield two hive inequalities $x+\frac{a}{k} \geq y$  and $y+\frac{a}{k} \geq x+\frac{2a}{k}$. Together, they imply $y = x+ \frac{a}{k}$. The rest of the argument proceeds in the same way. 
\end{proof}

We will make frequent use of the following minor extension of Lemma 6.4. 
 
\begin{lemma}\label{key}
Let $\{x_{A_1}, x_{A_2}, \dots, x_{A_m}, x_B, x_C\}$ be a positive configuration of $m+2$ points in the affine building where $d(x_B, x_C) = a \omega_1.$ Then the tropical coordinates obey
\begin{equation}\label{keyeqn}
A_1^{i_1} \cdots A_m^{i_m}B^{i_{m+1}}C^{i_{m+2}} = A_1^{i_1} \cdots A_m^{i_m}C^{i_{m+1}+i_{m+2}} + \frac{ai_{m+1}}{k}
\end{equation}
whenever $i_{m+2} > 0$.
\end{lemma}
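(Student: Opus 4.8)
The plan is to deduce Lemma~\ref{key} from Lemma~\ref{lem:keyABC} by a reduction that isolates the three flags $x_{A_1},x_B,x_C$ and then treats the remaining flags $x_{A_2},\dots,x_{A_m}$ as ``spectators.'' The key observation is that both sides of \eqref{keyeqn} only involve the flags $A_1,B,C$ through the exponents $i_1,i_{m+1},i_{m+2}$, while $A_2,\dots,A_m$ enter with fixed exponents $i_2,\dots,i_m$. So the strategy is to fix all the exponents $i_2,\dots,i_m$ and vary only $i_1,i_{m+1},i_{m+2}$ with $i_1+i_{m+1}+i_{m+2}=k-(i_2+\cdots+i_m)=:k'$, and to reinterpret the relevant tropical coordinates as the coordinates of an auxiliary three-flag configuration in the affine building for $\PGL_{k'}$ (or $\SL_{k'}$), or more directly to just invoke the relevant hive inequalities directly among the functions $A_1^{i_1}\cdots A_m^{i_m}B^{i_{m+1}}C^{i_{m+2}}$.

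Concretely, first I would record the ``base case'' needed: the analogue of $B^iC^{k-i}=\frac{ai}{k}$, namely that $A_1^{0}\cdots A_m^{i_m}B^{i_{m+1}}C^{i_{m+2}}$, as $i_{m+1}$ varies with $i_{m+1}+i_{m+2}$ fixed, is linear of slope $\frac{a}{k}$ in $i_{m+1}$. This follows from Theorem~\ref{basicfn}: using the metric formula $f^t_{i_1\dots i_m i_{m+1}i_{m+2}}=\min_p(\cdots + d_{i_{m+1}}(p,x_B)+d_{i_{m+2}}(p,x_C))$ together with $d(x_B,x_C)=a\omega_1$, one sees that moving a unit of weight from the $C$-slot to the $B$-slot changes the value by exactly $\frac{a}{k}$ — this is the same computation that gives $B^iC^{k-i}=\frac{ai}{k}$, now performed with the extra fixed points $x_{A_j}$ present but inert. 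Second, with this base case in hand, I would run exactly the rhombus argument from the proof of Lemma~\ref{lem:keyABC}: the hive inequalities attached to the flag $C$ (equivalently, to any triangulation containing the triangle $A_1BC$, restricted to the sub-simplex where $A_2,\dots,A_m$ carry the fixed exponents $i_2,\dots,i_m$) give, for each admissible rhombus along the $B$--$C$ edge, a pair of inequalities $x+\frac{a}{k}\ge y$ and $y+\frac{a}{k}\ge x+\frac{2a}{k}$, forcing $y=x+\frac{a}{k}$; chaining these from the edge $A_1^{i_1}C^{k-i_1}$ inward establishes \eqref{keyeqn} for all $i_{m+2}>0$ by induction on $i_{m+1}$ (or on $i_{m+2}$).

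There is one point of care: the hive inequalities as stated are for three flags, and we need to know that the corresponding rhombus inequalities hold among the functions $A_1^{i_1}\cdots A_m^{i_m}B^{i_{m+1}}C^{i_{m+2}}$ with the middle exponents frozen. This is exactly the content of the discussion following Lemma~\ref{lem:keyABC} about triangulations of the $n$-gon: for a positive configuration of $m+2$ points, choosing any triangulation that contains the triangle with vertices $A_1,B,C$, the hive inequalities from that triangle are precisely the rhombus inequalities among the functions $f^t$ where the other $m-1$ flags appear with whatever exponents are dictated by the neighboring triangles — but since those functions are $G$-invariant and the building structure is local to the triangle, freezing $i_2,\dots,i_m$ and running the argument inside that triangle is legitimate. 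I expect this bookkeeping — making precise that ``the hive inequalities for the $(A_1,B,C)$-triangle, with the other exponents held fixed, are available'' — to be the main (though routine) obstacle; once it is granted, the proof is a verbatim repetition of the chain-of-rhombi computation in Lemma~\ref{lem:keyABC}, with $k$ replaced by $k$ throughout and the slope still $\frac{a}{k}$ because the $B$--$C$ edge data is unchanged by the presence of the spectator flags.
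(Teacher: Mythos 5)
There is a genuine gap in your argument, and it sits exactly at the step you flag as ``routine bookkeeping.'' Your proof needs the rhombus inequalities to hold among the multi-point tropical functions $A_1^{i_1}\cdots A_m^{i_m}B^{i_{m+1}}C^{i_{m+2}}$ with $i_2,\dots,i_m$ frozen, and you assert that this is ``exactly the content of the discussion about triangulations of the $n$-gon'' for a triangulation containing the triangle $A_1BC$. That is not correct: the hive inequalities attached to the triangle $A_1BC$ in a triangulation of the $(m+2)$-gon are inequalities among the functions $A_1^{i}B^{j}C^{l}$, which depend on the three flags $A_1,B,C$ alone. The flags $A_2,\dots,A_m$ do not ``appear with whatever exponents are dictated by the neighboring triangles'' in those inequalities --- they do not appear at all, and the functions $A_1^{i_1}A_2^{i_2}\cdots B^{i_{m+1}}C^{i_{m+2}}$ with several positive interior exponents are not cluster coordinates for any triangulation. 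So the inequalities you want to chain are not furnished by the cluster/hive structure of the $(m+2)$-gon, and your ``one point of care'' is in fact the entire content of the lemma, left unaddressed. Your alternative suggestion, reinterpreting the coordinates in a building of smaller rank $k'$, also has an unresolved mismatch: the analogue of Lemma~\ref{lem:keyABC} in rank $k'$ would produce a slope $\frac{a'}{k'}$, and it is not evident that this equals the required $\frac{a}{k}$; making that precise would require analysing how the lattices, their chosen bases, and the coweight-valued distance behave under the relevant projection, none of which is attempted.

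The paper resolves the difficulty by a different and shorter reduction: it chooses valuation-minimizing ordered bases $v_{j1},\dots,v_{jk}$ for the $x_{A_j}$ and forms a single auxiliary lattice $x_A$ of full rank $k$ generated by $v_{11},\dots,v_{1i_1},\, v_{21},\dots,v_{2i_2},\,\dots,\, v_{m-1,1},\dots,v_{m-1,i_{m-1}},\, v_{m1},\dots,v_{ml}$ with $l=k-(i_1+\cdots+i_{m-1})$. The ordered basis of $x_A$ is engineered so that its first $i_1+\cdots+i_m$ generators are exactly the vectors entering the determinant defining $A_1^{i_1}\cdots A_m^{i_m}B^{i_{m+1}}C^{i_{m+2}}$. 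One then applies Lemma~\ref{lem:keyABC} directly to the three-point configuration $\{x_A,x_B,x_C\}$ in the same building (same rank $k$, so the slope $\frac{a}{k}$ comes out correctly), and translates back. This gives precisely the rhombus inequalities your argument needs, but via an honest three-flag configuration rather than by appeal to the triangulation's hive inequalities, which do not supply them. If you want to salvage your writeup, the fix is to insert this auxiliary-lattice construction in place of the appeal to triangulations; at that point chaining rhombi becomes legitimate, though it is also unnecessary, since Lemma~\ref{lem:keyABC} already gives the conclusion in one step.
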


\begin{proof} Take ordered bases for $x_{A_j}$ consisting of
$$v_{j1}, v_{j2}, \dots, v_{jk}.$$
Let $x_A$ be given by the lattice generated by
\begin{align*}
&v_{11}, \dots, v_{1i_1}\\
&v_{21}, \dots, v_{2i_2}\\
&\vdots\\
&v_{m-1,1}, \dots, v_{m-1,i_{m-1}} \\
&v_{m1}, \dots, v_{ml},
\end{align*}
where $l$ is taken so that $i_1+i_2+\cdots+i_{m-1}+l=k$. Then apply Lemma 6.4 to the configuration $\{x_A, x_B, x_C\}$. 
\end{proof}

Suppose we have a higher lamination given by (non-virtual) configuration $\{x_i\}$ such that the distance functions along the boundary are rational multiplies of the first fundamental coweight, i.e. $d(x_{i-1}, x_{i}) = a_i \omega_1$ for all $i$. We treat all indices cyclically $\mod n$. Using Lemma~\ref{key} repeatedly, we can establish a relationship between the $a_i$ and the tropical coordinates coming from the frozen variables for $\Conf_n V'$. 

\begin{align*}
0 &=(i+k-1)^k \\
&= (i+k-1)^1(i+k-2)^{k-1}-\frac{(k-1)a_{i+k-1}}{k} \\
&= (i+k-1)^1(i+k-2)^1(i+k-3)^{k-2}-\frac{(k-1)a_{i+k-1}}{k}-\frac{(k-2)a_{i+k-2}}{k} \\
&= \quad \quad \vdots \\
&= (i+k-1)^1(i+k-2)^1\cdots(i+1)^1(i)^1-\sum_{j=1}^{k-1} \frac{ja_{i+j}}{k}.
\end{align*}
In the above equations, a number with an exponent (e.g. $(i+k-1)^1$) stands for a boundary vertex being used in a tropical function, whereas a number without an exponent (e.g. $(k-1)$) is just the integer $k-1$. Thus the leftmost term in each expression is the value of a tropical function, while the other terms are evaluated using ordinary arithmetic.

The second line of the above equation follows from the first line by Lemma~\ref{key} applied to $B^{k-1}C^1$, where $B$ is the $(i+k-2)$th boundary vertex and $C$ is the $(i+k-1)$th. The remaining lines are similar.  

We denote $F_i := (i+k-1)^1(i+k-2)^1\cdots (i+1)^1(i)^1$. We have established
\begin{equation}\label{frozens}
F_i = \sum_{j=1}^{k-1} \frac{ja_{i+j}}{k}.
\end{equation}

The tropical function $F_i$ is the pullback $\pi^*(P^t(i,\dots,i+k-1))$ of the tropical Pl\"ucker coordinate $P^t(i,\dots,i+k-1)$. Moreover this Pl\"ucker coordinate is a frozen variable for $\tGr(k,n)$. Let us say a bit about notation here. It will be convenient to sometimes write, for example, $P^t(a, b, c, \dots)$ instead of $(a)^1(b)^1(c)^1\cdots$. We use the notations interchangeably. Moreover, for simplicity, we will also denote by $P^t$ the pullack $\pi^*(P^t)$. It should be clear at any given time on which space $P^t$ is a tropical function.

Now let us return to Equation \eqref{frozens}. This equation says that the frozen Pl\"ucker coordinates are determined by the distances $d(x_{i-1}, x_i) = a_i \omega_1$. The converse is true as well. If we know that a higher lamination satisfies $d(x_{i-1}, x_i) = a_i \omega_1$ for some $a_i$, and if we know the values of $F_i$ for all $i$, then we can determine the $a_i$. This amounts to inverting the matrix describing Equation \eqref{frozens}. 

\medskip

\begin{rmk} Although it is not important for us, let us explain why Equation \eqref{frozens} is invertible. Consider the vector space $\Q^n$. Let $T$ denote the map
$$T(a_1, a_2, \dots, a_n) = (a_n, a_1, a_2, \dots a_{n-1}).$$
Let us call the map $T$ the cyclic shift on $\Q^n$. Clearly $T^n=I$. Note that invertibility of Equation \eqref{frozens} is equivalent to the statement that the cyclic shifts of the vector 
$$v=(k-1, k-2, \dots, 3, 2, 1, 0, \dots, 0)$$ 
form a basis of $\Q^n$. 
This is equivalent to the cyclic shifts of 
\begin{equation}\label{weqn}
w = v-Tv=(k-1, -1, \dots, -1, 0, \dots, 0)
\end{equation}
forming a basis of the subspace $a_1+\cdots+a_n=0$.

Let $w = (a_1, a_2, \dots, a_n)$ where $a_1+\cdots+a_n=0$. Then the cyclic shifts of $w$ span the subspace $a_1+\cdots+a_n=0$ if and only if
$$\sum_{i=1}^n a_i \zeta^i \neq 0$$
for any $n$-th root of unity $\zeta \neq 1$. For our particular $w$ in Equation \ref{weqn}, this holds by the triangle inequality.

\end{rmk}
\medskip

Now we will use calculations similar to those giving Equation \eqref{frozens} to derive the inequalities which cut out the cone $\Conf^+_n V'(\Qt) \subset \Conf_n V'(\Qt)$. Suppose we have a triangle formed by a positive configuration of points $x_1, x_2, x_3 \in \Gr$, and suppose furthermore we know that $d(x_1,x_2)$ and $d(x_2,x_3)$ are rational multiples of $\omega_1$. Using Lemma~\ref{lem:keyABC} for these two sides, it follows that the convex hull generically looks like Figure~\ref{fig:hulltwosides}.

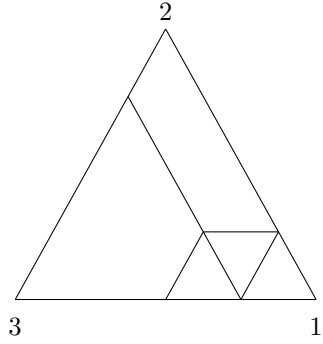
\begin{figure}[ht]
\begin{tikzpicture}[scale=1]

  \node (x004) at (0,1.8) {};
  \node [above] at (x004) {$2$};

  \node (x103) at (-0.5,0.9) {};
  \node (x013) at (0.5,0.9) {};

  \node (x202) at (-1,0) {};
  \node (x112) at (0,0) {};
  \node (x022) at (1,0) {};

  \node (x301) at (-1.5,-0.9) {};
  \node (x211) at (-0.5,-0.9) {};
  \node (x121) at (0.5,-0.9) {};
  \node (x031) at (1.5,-0.9) {};

  \node [label=270:$3$] (x400) at (-2,-1.8) {};
  \node (x310) at (-1,-1.8) {};
  \node (x220) at (0,-1.8) {};
  \node (x130) at (1,-1.8) {};
  \node [label=270:$1$] (x040) at (2,-1.8) {};

  \draw [] (0,1.8) to (2,-1.8);
  \draw [] (-0.5,0.9) to (1,-1.8);

  \draw [] (0.5,-0.9) to (1.5,-0.9);
   \draw [] (2,-1.8) to (-2,-1.8);

  \draw [] (0,-1.8) to (0.5,-0.9);
  \draw [] (1,-1.8) to (1.5,-0.9);
  \draw [] (0,1.8) to (-2,-1.8);


\end{tikzpicture}
\caption{\label{fig:hulltwosides} Generic convex hull when the left and ride sides satisfy $d(x_{i-1}, x_{i}) = a_i \omega_1$. }
\end{figure}

We see that the only tropical coordinate which is not yet determined is $1^{k-1}3$. Now let us consider all the hive inequalities associated to the point $2$. These are the inequalities associated to vertical rhombii in Figure~\ref{fig:viewfromabove}, i.e. those rhombii with horizontal diagonal. All of these inequalities are forced to hold except for one, corresponding to the unique horizontal line in the interior of Figure~\ref{fig:hulltwosides}:
\begin{align*}
1^{k-2}2^13^1 + 1^{k-1}2^1 &\geq 1^{k-1}3^1 + 1^{k-2}2^2, \text{ or equivalently } \\
1^{k-2}2^13^1 &\geq 1^{k-1}3^1 - \frac{a_2}{k}.
\end{align*}

We can relate both $1^{k-2}2^13^1$ and $1^{k-1}3^1$ to tropical Pl\"ucker coordinates using the same iterative procedure that we used to establish \eqref{frozens}.The preceding inequality becomes:  
\begin{equation}\label{computePluckers}
P^t(n-k+4,\dots,3)- \sum_{j=1}^{k-3} \frac{ja_{n-k+4+j}}{k}  \geq P^t(n-k+3,\dots,1,3)-\sum_{j=1}^{k-2} \frac{ja_{n-k+3+j}}{k} - \frac{a_2}{k}
\end{equation}
which, after reindexing sums and canceling, simplifies to 
\begin{equation}\label{eq:almostlaststep}
P^t(n-k+4,\dots,3)+\sum_{j=1}^{k-1} \frac{a_{n-k+3+j}}{k} \geq P^t(n-k+3,\dots,1,3).
\end{equation}
Using \eqref{frozens}, we have that $P^t(n-k+3,\dots,2)-P^t(n-k+4,\dots,3) = \sum_{j=1}^{k-1} \frac{a_{n-k+3+j}}{k}-\frac{(k-1)a_3}{k}$. So \eqref{eq:almostlaststep} simplifies to 
$$ P^t(n-k+3,\dots,2)+ \frac{k-1}{k}a_3 \geq P^t(n-k+3,\dots,1,3).$$

Let us denote by define $F^+_i =P^t(i,i+1,\dots,i+k-2,i+k)= i^1\cdots(i+k-2)^1(i+k)^1$. Generalizing the preceding calculation (by rotating), we obtain





\begin{equation}\label{ineq}
F_i + \frac{k-1}{k}a_{i+k} \geq F_i^+ \text{ for all $i$.}
\end{equation}
Note the resemblance to the inequalities found in \cite{RW} (Equation (5.1)). Now, observe that each $a_i$ can be written as a (somewhat complicated) linear combination of the functions $F_i$ by inverting Equation~\eqref{frozens}. Thus Equation~\eqref{ineq} defines a set of inequalities on the tropical Pl\"ucker coordinates for $\Conf_n V'(\Qt)$. This is the cone that we will call $\Conf^+_n V'(\Qt)$.

\subsection{Construction and proof}

We will now proceed to give a proof of Theorem \ref{distinguished}. The heart of the proof is the following statement, the parts of which we will prove in tandem with Theorem \ref{distinguished}.

\begin{prop}\label{uniquelift}
Let $y^t \in \Conf_n V'(\Qt)$. Evaluating the frozen variables $F_i$ at $y^t$ determines the boundary values $a_i$ using \ref{frozens}. Then there is a unique $x^t \in \Conf_n \A'(\Qt)$ satisfying Equation~\eqref{keyeqn} (for these $a_i$) such that $\pi^t(x^t)=y^t$.
\end{prop}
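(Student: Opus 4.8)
The plan is to exploit the cluster fibration property of $\pi \colon \Conf_n \A' \to \Conf_n V'$ and the explicit relations in Lemma~\ref{key} to show that Equation~\eqref{keyeqn} propagates the data of $y^t$ across a whole cluster. Concretely, I would fix a triangulation $\mathcal T$ of the $n$-gon and work with the associated cluster on $\Conf_n \A'$. By the cluster fibration statement (Proposition~\ref{posfib}(3), via \cite{OPS}), one may choose $\mathcal T$ so that a cluster of Pl\"ucker-type coordinates on $\Conf_n V'$ pulls back to a subset of this cluster; in tropical coordinates this means a subset of the tropical coordinates on $\Conf_n \A'$ are exactly the tropical functions $P^t_I = f^t_{1\cdots1}$ pulled back from $\Conf_n V'$, and these are determined by $y^t$. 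First I would record what is given: evaluating $y^t$ on all the frozen Pl\"ucker coordinates and using \eqref{frozens} (which is invertible, per the Remark) pins down the numbers $a_i$, and hence all the distances $d(x_{i-1},x_i) = a_i \omega_1$ along the boundary; this already forces the boundary (edge) tropical coordinates $A_{i-1}^p A_i^{k-p}$ of any candidate $x^t$, by the computation $B^iC^{k-i} = \tfrac{ai}{k}$.

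Next I would show that Equation~\eqref{keyeqn}, applied inside each triangle $\{x_p, x_q, x_r\}$ of $\mathcal T$, expresses every interior (face) tropical coordinate $f^t_{i_1 i_2 i_3}(x_p,x_q,x_r)$ with $i_3 > 0$ in terms of the edge coordinate $f^t_{i_1, i_2+i_3, 0}(x_p, x_r)$ together with $a_q$. Iterating down the ``$i_3$ direction'' of the triangular array until the last coordinate lies on the edge $\overline{x_p x_r}$ — exactly the telescoping computation used to derive \eqref{frozens} — shows that every face coordinate of the triangle is a fixed $\Q$-linear combination of the edge coordinates $\{f^t_{p0}(\text{some edge})\}$ and the $a_i$'s. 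But the edge coordinates along the boundary of the $n$-gon are determined (as above), and the edge coordinates along the internal diagonals of $\mathcal T$ are precisely $f^t_{1\cdots1}$-type functions on pairs, hence pullbacks of Pl\"ucker coordinates $P^t_I$ evaluated at $y^t$ — here I would use the fact (as in the derivation of \eqref{frozens}) that repeated application of Lemma~\ref{key} rewrites $\prod_j (i_j)^1$ chains into genuine tropical Pl\"ucker coordinates. Therefore, in the cluster attached to $\mathcal T$, every tropical coordinate of the hypothetical $x^t$ is a determined function of $y^t$ and the $a_i$'s, which are themselves functions of $y^t$; this proves uniqueness of $x^t$.

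For existence, I would run the same computation in reverse: define a tuple of values for the cluster coordinates on $\Conf_n \A'$ by the formulas just derived (boundary edges from the $a_i$, internal diagonals from the $P^t_I(y^t)$, faces from \eqref{keyeqn}), and check that this tuple (i) is a well-defined point of $\Conf_n \A'(\Qt)$ — i.e. consistent on edges shared by two triangles of $\mathcal T$, which holds because a shared diagonal carries the same $P^t_I$ in both triangles, and the face formula only feeds off that diagonal and the vertex datum $a_i$ — (ii) satisfies $\pi^t(x^t) = y^t$ by construction, since the coordinates $\pi^*(P^t_I)$ among the cluster were set equal to $P^t_I(y^t)$, and (iii) satisfies Equation~\eqref{keyeqn} for all configurations of $m+2$ of the points, not just those internal to a single triangle. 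Point (iii) is where I would need to do a little work: \eqref{keyeqn} for an arbitrary subconfiguration containing the edge $\overline{x_{i-1}x_i}$ must be deduced from the triangle-wise relations plus the hive-type (Pl\"ucker) relations that already hold on $\Conf_n V'$; I expect to argue this by the same reduction used in the proof of Lemma~\ref{key} — collapse the extra flags $x_{A_1}, \dots, x_{A_m}$ into a single auxiliary lattice $x_A$ and invoke Lemma~\ref{lem:keyABC} — combined with the mutation-invariance of the cluster structure, so that it suffices to verify \eqref{keyeqn} in one triangulation.

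The main obstacle, I expect, is precisely step (iii) of the existence argument together with the bookkeeping that the propagated values are genuinely consistent across all triangulations (equivalently, that the constructed $x^t$ does not depend on the auxiliary choice of $\mathcal T$): one is reconstructing a point of the tropical space from its coordinates in a single chart, and one must be sure that Equation~\eqref{keyeqn} — which is a family of relations indexed by \emph{all} subsets, not just the $\binom{n}{3}$ triangles — is implied by its restriction to a triangulation. The cleanest route is probably to first establish that the set of $x^t$ satisfying \eqref{keyeqn} for a fixed triangulation $\mathcal T$ is cut out by finitely many linear equations inside $\Conf_n \A'(\Qt)$, show that $\pi^t$ restricted to this affine-linear subspace is a bijection onto $\Conf_n V'(\Qt)$ by the rank count above (the number of free parameters matches $\dim \Conf_n V'$), and then separately check that any $x^t$ in this subspace automatically satisfies \eqref{keyeqn} for \emph{every} subconfiguration — reducing, via the lattice-collapsing trick, to the three-point case, which is Lemma~\ref{lem:keyABC} and holds unconditionally for positive configurations. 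Triangulation-independence then follows because both the subspace and $\pi^t$ are defined without reference to $\mathcal T$ once we know \eqref{keyeqn} holds in full generality.
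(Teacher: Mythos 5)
Your uniqueness argument matches the paper's: fix a triangulation, observe that the edge coordinates along the boundary are pinned by the $a_i$, that internal-diagonal edge coordinates are Pl\"ucker pullbacks, and that Equation~\eqref{keyeqn} then telescopes every face coordinate to a $\Q$-linear combination of these data. That part is fine.

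The gap is in existence, precisely at your step (iii). You want to show that the candidate $x^t$ (defined by imposing \eqref{keyeqn} in one triangulation) satisfies \eqref{keyeqn} for \emph{every} subconfiguration, and you propose to reduce to the three-point case via the lattice-collapsing trick, citing that Lemma~\ref{lem:keyABC} ``holds unconditionally for positive configurations.'' But this is exactly backwards from where the difficulty lies: for a general $y^t \in \Conf_n V'(\Qt)$, the constructed $x^t$ need \emph{not} satisfy the hive inequalities, i.e., it is not a positive configuration, so there are no lattices to collapse and Lemma~\ref{lem:keyABC} (and the proof of Lemma~\ref{key}, which manufactures an auxiliary lattice $x_A$ from generators of the $x_{A_j}$) simply does not apply. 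The appeal to mutation-invariance of the cluster structure does not repair this, because \eqref{keyeqn} is a family of linear relations among tropical $\A$-coordinates that is not itself a consequence of the exchange relations.

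What the paper does instead is a lineality-space argument that you haven't found. The tropical points $l_1,\dots,l_n$ --- the lifts of the frozen directions, each an honest lamination with $d(x_{i-1},x_i)=\omega_1$ on a length-$k$ window of boundary edges --- satisfy \eqref{keyeqn}. A combinatorial observation (the union of the strict-inequality segments of the $l_i$'s hulls covers exactly the segments one needs in the generic hull of a lift) shows that for any $y^t$ one can choose $c_i\geq 0$ so that $x^t+\sum c_i l_i$ satisfies all the hive inequalities, hence is a genuine positive configuration of the required boundary type, hence satisfies \eqref{keyeqn}. Since \eqref{keyeqn} is a family of \emph{linear} equations and holds for both $x^t+\sum c_i l_i$ and $\sum c_i l_i$, it holds for $x^t$. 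Without something equivalent to this ``push into the hive cone, then subtract'' step, your existence argument does not close; you would need to supply it or find another way to validate \eqref{keyeqn} outside the positive cone.
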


\begin{rmk} The $a_i$ are given by a linear combination of $F_i(y^t)$, and they may be negative for a general point $y^t \in \Conf_n V'(\Qt)$. It will be a consequence of the proof of Theorem \ref{distinguished} that if $y^t \in \Conf^+_n V'(\Qt)$, we will have that $a_i \geq 0$. (The converse of this statement is not true.)
\end{rmk}

We will illustrate the proof in detail in the case of $\Gr(4,8)$. The general case will be clear from this special case. 

We begin with the case that $y^t=\pi(x^t)$ where $x^t$ is a higher lamination given by a configuration $\{x_i\}$ such that each $d(x_{i-1}, x_{i})$ is a rational multiple of $\omega_1$. Since $x^t$ is an actual configuration, it is in the positive cone $x^t \in \Conf^+_8 \A'(\Qt)$. As usual, we number the flags in our configuration of flags $1, 2, \dots, 8$ and place them at the vertices of an octagon. We triangulate the octagon as in Figure~\ref{fig:octagon}. 

\begin{figure}
\begin{tikzpicture}[scale=1]

  \node (3) at (0,1.4) {};
  \node [above] at (3) {$3$};

  \node (4) at (-1,1) {};
  \node [above, left] at (4) {$4$};

  \node (5) at (-1.4,0) {};
  \node [left] at (5) {$5$};

  \node (6) at (-1,-1) {};
  \node [below, left] at (6) {$6$};

  \node (7) at (0,-1.4) {};
  \node [below] at (7) {$7$};

  \node (8) at (1,-1) {};
  \node [below, right] at (8) {$8$};

  \node (1) at (1.4,0) {};
  \node [right] at (1) {$1$};

  \node (2) at (1,1) {};
  \node [above, right] at (2) {$2$};

  \draw [] (0,1.4) to (-1,1);
  \draw [] (-1,1) to (-1.4,0);
  \draw [] (-1.4,0) to (-1,-1);
  \draw [] (-1,-1) to (0,-1.4);
  \draw [] (0,-1.4) to (1,-1);
  \draw [] (1,-1) to (1.4,0);
  \draw [] (1.4,0) to (1,1);
  \draw [] (1,1) to (0,1.4);

  \draw [] (0,1.4) to (0,-1.4);
  \draw [] (0,1.4) to (-1,-1);
  \draw [] (0,1.4) to (-1.4,0);
  \draw [] (0,1.4) to (1,-1);
  \draw [] (0,1.4) to (1.4,0);


\end{tikzpicture}
\caption{\label{fig:octagon}}
\end{figure}

On each of the resulting six triangles, we have the associated cluster coordinates on $\Conf^+_8 \A'$, placed at the lattice points of a triangular array, as usual. As in Figures~\ref{fig:oneside},\ref{fig:hulltwosides}, the hive inequalities force the convex hulls of these coordinates to be as pictured below. In the diagrams below, the bold lines describe segments or regions where all the coordinates are determined by some Pl\"ucker coordinates in combination with Equation~\eqref{keyeqn}:

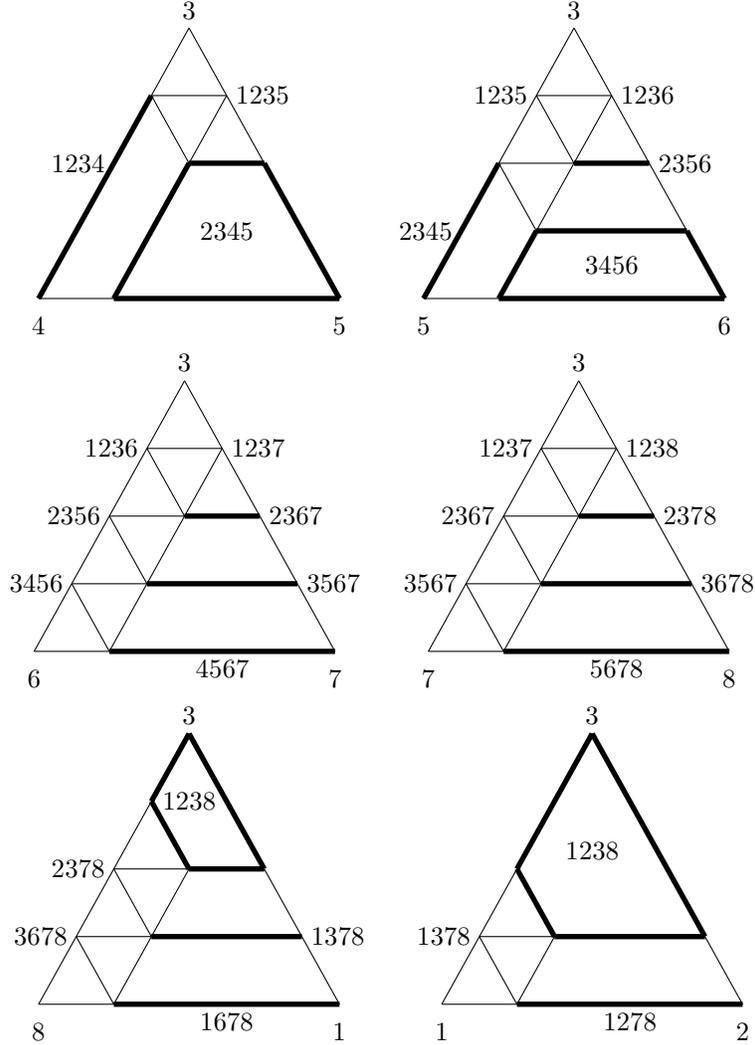
\begin{figure}
\begin{tikzpicture}[scale=1]

  \node (x004) at (0,1.8) {};
  \node [above] at (x004) {$3$};

  \node (x103) at (-0.5,0.9) {};
  \node (x013) at (0.5,0.9) {};
  \node [right] at (x013) {$1235$};

  \node (x202) at (-1,0) {};
  \node [left] at (x202) {$1234$};
  \node (x112) at (0,0) {};
  \node (x022) at (1,0) {};

  \node (x301) at (-1.5,-0.9) {};
  \node (x211) at (-0.5,-0.9) {};
  \node (x121) at (0.5,-0.9) {$2345$};
  \node (x031) at (1.5,-0.9) {};

  \node [label=270:$4$] (x400) at (-2,-1.8) {};
  \node (x310) at (-1,-1.8) {};
  \node (x220) at (0,-1.8) {};
  \node (x130) at (1,-1.8) {};
  \node [label=270:$5$] (x040) at (2,-1.8) {};

  \draw [] (-2,-1.8) to (2,-1.8);
  \draw [] (-0.5,0.9) to (0.5,0.9);
  \draw [line width=2pt] (0,0) to (1,0);
  \draw [line width=2pt] (-1,-1.8) to (2,-1.8);

  \draw [] (0,1.8) to (-2,-1.8);
  \draw [line width=2pt] (-0.5,0.9) to (-2,-1.8);
  \draw [] (0.5,0.9) to (-1,-1.8);
  \draw [line width=2pt] (0,0) to (-1,-1.8);

  \draw [] (0,1.8) to (2,-1.8);
  \draw [] (-0.5,0.9) to (0,0);
  \draw [line width=2pt] (1,0) to (2,-1.8);

\end{tikzpicture} \quad
\begin{tikzpicture}[scale=1]

  \node (x004) at (0,1.8) {};
  \node [above] at (x004) {$3$};

  \node (x103) at (-0.5,0.9) {};
  \node [left] at (x103) {$1235$};
  \node (x013) at (0.5,0.9) {};
  \node [right] at (x013) {$1236$};

  \node (x202) at (-1,0) {};
  \node (x112) at (0,0) {};
  \node (x022) at (1,0) {};
  \node [right] at (x022) {$2356$};

  \node (x301) at (-1.5,-0.9) {};
  \node [left] at (x301) {$2345$};
  \node (x211) at (-0.5,-0.9) {};
  \node (x121) at (0.5,-0.9) {};
  \node (3456) at (0.5, -1.35) {$3456$};
  \node (x031) at (1.5,-0.9) {};

  \node [label=270:$5$] (x400) at (-2,-1.8) {};
  \node (x310) at (-1,-1.8) {};
  \node (x220) at (0,-1.8) {};
  \node (x130) at (1,-1.8) {};
  \node [label=270:$6$] (x040) at (2,-1.8) {};

  \draw [] (-2,-1.8) to (2,-1.8);
  \draw [] (-0.5,-0.9) to (1.5,-0.9);
  \draw [] (-1,0) to (1,0);
  \draw [] (-0.5,0.9) to (0.5,0.9);
  \draw [line width=2pt] (0,0) to (1,0);
  \draw [line width=2pt] (-0.5,-0.9) to (1.5,-0.9);
  \draw [line width=2pt] (-1,-1.8) to (2,-1.8);

  \draw [] (-2,-1.8) to (0,1.8);
  \draw [line width=2pt] (-2,-1.8) to (-1,0);
  \draw [] (-1,-1.8) to (0.5,0.9);
  \draw [line width=2pt] (-1,-1.8) to (-0.5,-0.9);

  \draw [] (0,1.8) to (2,-1.8);
  \draw [] (-0.5,0.9) to (0,0);
  \draw [] (-1,0) to (-0.5,-0.9);
  \draw [line width=2pt] (1.5,-0.9) to (2,-1.8);

\end{tikzpicture}

\begin{tikzpicture}[scale=1]

  \node (x004) at (0,1.8) {};
  \node [above] at (x004) {$3$};

  \node (x103) at (-0.5,0.9) {};
  \node [left] at (x103) {$1236$};
  \node (x013) at (0.5,0.9) {};
  \node [right] at (x013) {$1237$};

  \node (x202) at (-1,0) {};
  \node [left] at (x202) {$2356$};
  \node (x112) at (0,0) {};
  \node (x022) at (1,0) {};
  \node [right] at (x022) {$2367$};

  \node (x301) at (-1.5,-0.9) {};
  \node [left] at (x301) {$3456$};
  \node (x211) at (-0.5,-0.9) {};
  \node (x121) at (0.5,-0.9) {};
  \node (x031) at (1.5,-0.9) {};
  \node [right] at (x031) {$3567$};

  \node [label=270:$6$] (x400) at (-2,-1.8) {};
  \node (x310) at (-1,-1.8) {};
  \node (x220) at (0,-1.8) {};
  \node (4567) at (0.5, -1.8) {};
  \node [below] at (4567) {$4567$};
  \node (x130) at (1,-1.8) {};
  \node [label=270:$7$] (x040) at (2,-1.8) {};

  \draw [] (-2,-1.8) to (2,-1.8);
  \draw [] (-1.5,-0.9) to (1.5,-0.9);
  \draw [] (-1,0) to (1,0);
  \draw [] (-0.5,0.9) to (0.5,0.9);
  \draw [line width=2pt] (0,0) to (1,0);
  \draw [line width=2pt] (-0.5,-0.9) to (1.5,-0.9);
  \draw [line width=2pt] (-1,-1.8) to (2,-1.8);

  \draw [] (-2,-1.8) to (0,1.8);
  \draw [] (-1,-1.8) to (0.5,0.9);

  \draw [] (0,1.8) to (2,-1.8);
  \draw [] (-0.5,0.9) to (0,0);
  \draw [] (-1,0) to (-0.5,-0.9);
  \draw [] (-1.5,-0.9) to (-1,-1.8);

\end{tikzpicture}\quad
\begin{tikzpicture}[scale=1]

  \node (x004) at (0,1.8) {};
  \node [above] at (x004) {$3$};

  \node (x103) at (-0.5,0.9) {};
  \node [left] at (x103) {$1237$};
  \node (x013) at (0.5,0.9) {};
  \node [right] at (x013) {$1238$};

  \node (x202) at (-1,0) {};
  \node [left] at (x202) {$2367$};
  \node (x112) at (0,0) {};
  \node (x022) at (1,0) {};
  \node [right] at (x022) {$2378$};

  \node (x301) at (-1.5,-0.9) {};
  \node [left] at (x301) {$3567$};
  \node (x211) at (-0.5,-0.9) {};
  \node (x121) at (0.5,-0.9) {};
  \node (x031) at (1.5,-0.9) {};
  \node [right] at (x031) {$3678$};

  \node [label=270:$7$] (x400) at (-2,-1.8) {};
  \node (x310) at (-1,-1.8) {};
  \node (x220) at (0,-1.8) {};
  \node (5678) at (0.5, -1.8) {};
  \node [below] at (5678) {$5678$};
  \node (x130) at (1,-1.8) {};
  \node [label=270:$8$] (x040) at (2,-1.8) {};

  \draw [] (-2,-1.8) to (2,-1.8);
  \draw [] (-1.5,-0.9) to (1.5,-0.9);
  \draw [] (-1,0) to (1,0);
  \draw [] (-0.5,0.9) to (0.5,0.9);
  \draw [line width=2pt] (0,0) to (1,0);
  \draw [line width=2pt] (-0.5,-0.9) to (1.5,-0.9);
  \draw [line width=2pt] (-1,-1.8) to (2,-1.8);

  \draw [] (-2,-1.8) to (0,1.8);
  \draw [] (-1,-1.8) to (0.5,0.9);

  \draw [] (0,1.8) to (2,-1.8);
  \draw [] (-0.5,0.9) to (0,0);
  \draw [] (-1,0) to (-0.5,-0.9);
  \draw [] (-1.5,-0.9) to (-1,-1.8);

\end{tikzpicture}

\begin{tikzpicture}[scale=1]

  \node (x004) at (0,1.8) {};
  \node [above] at (x004) {$3$};

  \node (x103) at (-0.5,0.9) {};
  \node (x013) at (0.5,0.9) {};
  \node (1238) at (0,0.9) {$1238$};

  \node (x202) at (-1,0) {};
  \node [left] at (x202) {$2378$};
  \node (x112) at (0,0) {};
  \node (x022) at (1,0) {};

  \node (x301) at (-1.5,-0.9) {};
  \node [left] at (x301) {$3678$};
  \node (x211) at (-0.5,-0.9) {};
  \node (x121) at (0.5,-0.9) {};
  \node (x031) at (1.5,-0.9) {};
  \node [right] at (x031) {$1378$};

  \node [label=270:$8$] (x400) at (-2,-1.8) {};
  \node (x310) at (-1,-1.8) {};
  \node (x220) at (0,-1.8) {};
  \node (1678) at (0.5, -1.8) {};
  \node [below] at (1678) {$1678$};
  \node (x130) at (1,-1.8) {};
  \node [label=270:$1$] (x040) at (2,-1.8) {};

  \draw [] (-2,-1.8) to (2,-1.8);
  \draw [] (-1.5,-0.9) to (1.5,-0.9);
  \draw [] (-1,0) to (1,0);
  \draw [line width=2pt] (0,0) to (1,0);
  \draw [line width=2pt] (-0.5,-0.9) to (1.5,-0.9);
  \draw [line width=2pt] (-1,-1.8) to (2,-1.8);

  \draw [] (-2,-1.8) to (0,1.8);
  \draw [] (-1,-1.8) to (0,0);
  \draw [line width=2pt] (-0.5,0.9) to (0,1.8);

  \draw [] (0,1.8) to (2,-1.8);
  \draw [line width=2pt] (0,1.8) to (1,0);
  \draw [] (-0.5,0.9) to (0,0);
  \draw [line width=2pt] (-0.5,0.9) to (0,0);
  \draw [] (-1,0) to (-0.5,-0.9);
  \draw [] (-1.5,-0.9) to (-1,-1.8);

\end{tikzpicture} \quad
\begin{tikzpicture}[scale=1]

  \node (x004) at (0,1.8) {};
  \node [above] at (x004) {$3$};

  \node (x103) at (-0.5,0.9) {};
  \node (x013) at (0.5,0.9) {};

  \node (x202) at (-1,0) {};
  \node (x112) at (0,0) {};
  \node [above] at (x112) {$1238$};
  \node (x022) at (1,0) {};

  \node (x301) at (-1.5,-0.9) {};
  \node [left] at (x301) {$1378$};
  \node (x211) at (-0.5,-0.9) {};
  \node (x121) at (0.5,-0.9) {};
  \node (x031) at (1.5,-0.9) {};

  \node [label=270:$1$] (x400) at (-2,-1.8) {};
  \node (x310) at (-1,-1.8) {};
  \node (x220) at (0,-1.8) {};
  \node (1278) at (0.5, -1.8) {};
  \node [below] at (1278) {$1278$};
  \node (x130) at (1,-1.8) {};
  \node [label=270:$2$] (x040) at (2,-1.8) {};

  \draw [] (-2,-1.8) to (2,-1.8);
  \draw [] (-1.5,-0.9) to (1.5,-0.9);
  \draw [line width=2pt] (-0.5,-0.9) to (1.5,-0.9);
  \draw [line width=2pt] (-1,-1.8) to (2,-1.8);

  \draw [] (-2,-1.8) to (0,1.8);
  \draw [] (-1,-1.8) to (-0.5,-0.9);
  \draw [line width=2pt] (-1,0) to (0,1.8);

  \draw [] (0,1.8) to (2,-1.8);
  \draw [line width=2pt] (0,1.8) to (1.5,-0.9);
  \draw [] (-1,0) to (-0.5,-0.9);
  \draw [line width=2pt] (-1,0) to (-0.5,-0.9);
  \draw [] (-1.5,-0.9) to (-1,-1.8);

\end{tikzpicture}
\caption{How the coordinates of $x^t \in \Conf_8 \A'$ are determined by the coordinates of $y^t \in \Conf_8 V'(\Qt)$ \label{fig:xtyt}}
\end{figure}




For example, in triangle $356$, we have that 
\begin{itemize}
\item $3^35^1$ is determined by $P^t(1235)$;
\item $3^36^1$ is determined by $P^t(1236)$;
\item $3^25^2$ and $35^3$ are determined by $P^t(2345)$;
\item $3^256$ and $3^26^2$ are determined by $P^t(2356)$;
\item $35^26$, $356^2$, $36^3$, $5^36, 5^26^2$ and $56^3$ are all determined by $P^t(3456)$.
\end{itemize}
Verifying these statements requires repeated use of the iterative process via Lemma~\ref{key}, similar to the calculations for \eqref{frozens} and \eqref{computePluckers}.

We see from this analysis that all the coordinates in this cluster for $\Conf_8 \A'$ are determined by Equation \eqref{keyeqn} and Pl\"ucker coordinates. In fact, we see more: the Pl\"ucker coordinates that appear are exactly those occurring in our standard cluster for $\Gr(4,8)$ in Figure 1. Thus we have the following:

\begin{prop} Suppose that $x^t \in \Conf^+_n \A'(\Qt)$ corresponds to a higher lamination $\{x_i\}$ such that $d(x_{i-1}, x_i) = a_i \omega_1$ for some $a_i \in \Q$. Then the coordinates of $x^t$ are completely determined by the coordinates of $y^t=\pi(x^t)$.
\end{prop}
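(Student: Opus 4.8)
The plan is to put $x^t$ into the cluster coming from the \emph{fan} triangulation of the $n$-gon based at a single vertex $v$ (the triangulation of Figure~\ref{fig:octagon}). What makes this triangulation the right one is that each of its $n-2$ triangles has the form $\{v,\,v{+}j,\,v{+}j{+}1\}$, so each carries at least one boundary edge — and the two extreme triangles $\{v,v{+}1,v{+}2\}$ and $\{v,v{-}2,v{-}1\}$ carry two — whereas a general triangulation may contain an internal triangle, to which Lemma~\ref{lem:keyABC} does not apply. We must show each cluster coordinate $v^{i_1}p^{i_2}q^{i_3}$ (with $i_1+i_2+i_3=k$) of each triangle $\{v,p,q\}$ is a function of the coordinates of $y^t$. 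As a first reduction, by \eqref{frozens} and the invertibility remark following it, the values $F_i(y^t)=P^t(i,\dots,i{+}k{-}1)(y^t)$ of the frozen Plücker coordinates determine the boundary parameters $a_i$ with $d(x_{i-1},x_i)=a_i\omega_1$. So it is enough to express every $v^{i_1}p^{i_2}q^{i_3}$ at $x^t$ as an explicit function of the tropical Plücker coordinates $P^t_J(y^t)$ and the $a_i$.

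For a middle triangle $T=\{v,p,q\}$ with $q=p{+}1$, applying Lemma~\ref{key} to the $\omega_1$-boundary edge $pq$ (which carries distance $a_q\omega_1$) gives
\[
v^{i_1}p^{i_2}q^{i_3} \;=\; v^{i_1}q^{\,i_2+i_3} \;+\; \tfrac{a_q i_2}{k} \qquad (i_3>0),
\]
while the coordinates with $i_3=0$ are just the edge functions $v^{i_1}p^{i_2}$ on the diagonal $vp$; hence every coordinate of $T$ is an edge function on one of the diagonals $vp$, $vq$ plus a $\tfrac1k\Z$-multiple of $a_q$. For the two extreme triangles, using Lemma~\ref{key} on both $\omega_1$-sides collapses all coordinates except the single $1^{k-1}3$-type coordinate of Figure~\ref{fig:hulltwosides}, which lies on the diagonal $v(v{\pm}2)$, the rest becoming $\tfrac1k\Z$-combinations of the $a_i$. (Geometrically this is the degeneration of the convex hulls to the puzzles in Figures~\ref{fig:oneside}, \ref{fig:hulltwosides} and the panels of Figure~\ref{fig:xtyt}.) So it remains to handle a diagonal edge function $v^aw^b$ ($a+b=k$) and, for an extreme triangle, the one exceptional coordinate. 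These are treated by the iterative ``peeling'' already used to derive \eqref{frozens} and \eqref{computePluckers}: starting from $v^aw^b$ one repeatedly applies Lemma~\ref{key} across a boundary $\omega_1$-edge to trade a fat contribution $j^m$ for single-vector contributions along the boundary arc of length $m$ ending at $j$ (spreading away from the side just used), each step adding a $\tfrac1k\Z$-multiple of some $a_i$; the procedure stops at a determinant of single vectors from $k$ distinct flags lying on pairwise disjoint boundary arcs — i.e.\ at a tropical Plücker coordinate $P^t_J(y^t)$ (and, as in the $\Gr(4,8)$ case, at one from a fixed Plücker cluster for $\Gr(k,n)$). Thus each $v^{i_1}p^{i_2}q^{i_3}$ at $x^t$ equals $P^t_J(y^t)$ plus a $\tfrac1k\Z$-linear combination of the $a_i$, hence a function of $y^t$; assembling over the triangles of the fan proves the proposition.

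The hard part is this last peeling step carried out uniformly in $k$ and $n$: one must check that the boundary arcs produced while spreading always have total length $k\le n$ and stay pairwise disjoint (never wrapping around the polygon), so that the iteration genuinely terminates at a single Plücker coordinate and not at some other invariant. For the fan triangulation this is tractable — in a middle triangle the two diagonals from $v$ force the arcs to hang off $v$ and off $w\in\{v{+}j,v{+}j{+}1\}$ on opposite sides, and $a+b=k\le n$ gives disjointness; the extreme triangles reduce to the $1^{k-1}3$-computation around \eqref{computePluckers}, valid once $n\ge k{+}1$ — and it is exactly the content that Figure~\ref{fig:xtyt} verifies region by region for $\Gr(4,8)$. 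I would isolate the arc-disjointness claim as a short combinatorial lemma and otherwise appeal to the iteration already illustrated; no new idea beyond Lemma~\ref{key} and the invertibility of \eqref{frozens} is needed.
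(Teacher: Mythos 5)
Your overall strategy matches the paper's exactly: use the fan triangulation based at a single vertex, recover the boundary parameters $a_i$ from the frozen Pl\"ucker coordinates by inverting Equation~\eqref{frozens}, reduce face functions of each fan triangle to diagonal edge functions via Lemma~\ref{lem:keyABC}, and peel edge functions down to Pl\"ucker coordinates by iterating Lemma~\ref{key}. The paper carries this out for $\Gr(4,8)$ in Figure~\ref{fig:xtyt} and declares the general case clear, so both arguments are schematic in the same place.

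The claim you isolate as the ``hard part,'' however, is false as stated, and this is the genuine gap. You assert that when peeling an edge function $v^aw^b$, spreading each exponent ``away from the side just used'' keeps the produced boundary arcs pairwise disjoint. But Lemma~\ref{key} transfers exponent only \emph{backward} (from $C$ onto the vertex $B$ immediately preceding it, since that is the edge carrying $d(x_B,x_C)=a\omega_1$); there is no forward version, so both arcs grow in the same direction. They therefore overlap whenever the diagonal is short relative to $b$. A concrete failure occurs already in $\Gr(4,8)$, triangle $\{3,5,6\}$: for $3^1 5^3$, spreading $5^3$ backward produces the arc $\{3,4,5\}$, which already contains the singleton $\{3\}$ coming from $3^1$. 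The coordinate does equal $P^t(2345)$ plus a $\tfrac1k\Z$-combination of the $a_i$ (as the paper's table asserts), but reaching it needs an extra collapse-then-respread: collapse $3^1$ across the edge from $3$ to $4$ to get $4^3 5^1 + \tfrac{a_4}{k}$, then spread $4^3$ backward past $3$ to $2$. Equivalently, overlapping arcs must be ``slid'' leftward to disjointify, giving here the single arc $\{2,3,4,5\}$. So the lemma you should isolate is not ``arcs never overlap'' but that this slide/collapse-respread process terminates at a Pl\"ucker coordinate; that statement is true and suffices, but your disjointness justification does not prove it and is demonstrably wrong.
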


Now, the coordinates of $x^t$ automatically satisfy Equation \eqref{keyeqn}: because $x^t$ is represented by a higher lamination $\{x_i\}$ such that $d(x_{i-1}, x_i)=a_i\omega_1$, Equation \eqref{keyeqn} holds by Lemma \ref{key}. Thus the proposition above gives us Theorem~\ref{uniquelift} for any $y^t$ such that $y^t=\pi(x^t)$ and $x^t$ corresponds to a higher lamination $\{x_i\}$ such that $d(x_{i-1}, x_i) = a_i \omega_1$.

\par
We now deal with the general case. Fix some $y^t.$ Then we can still impose Equation~\eqref{keyeqn} to construct a point $x^t \in \Conf_n \A'(\Qt)$. Just as in the previous case, all coordinates of $x^t$ are determined by those of $y^t$ after we impose Equation~\eqref{keyeqn}. The values of $F_i(y^t)$ determine the values of $a_i$, which in combination with the Pl\"ucker coordinates of $y^t$ determine the coordinates of $x^t$. In Figure 8, all the coordinates along a bold segment or in a bold-outlined region are determined by the labelled Pl\"ucker coordinate. Thus, we have uniqueness of $x^t$.

Let us point out that the convex hull of the coordinates of $x^t$ will \emph{not} look like Figure 8 in general. The coordinates along a bold segment in a bold-outlined region are still collinear or coplanar (again, using Equation~\eqref{keyeqn}), respectively, but the hive inequalities may not hold. $x^t \in \Conf^+_8 \A'(\Qt)$ satisfies the hive inequalities if and only if when one plots the coordinates of $x^t$, the convex hull looks like Figure 8.

Although if $x^t$ exists, it is unique, the existence of $x^t$ is a bit less clear--for example, it is not clear that the $x^t$ we have constructed is independent of triangulation. In other words, our construction of $x^t$ may not be well defined, because, in principle, imposing all instances of Equation \eqref{keyeqn} could lead to a contradiction. The coordinates of $x^t$ as above are only determined by some subset of instances of Equation \eqref{keyeqn}. For now, let us call $x^t$ the point of $x^t \in \Conf^+_8 \A'(\Qt)$ that is determined using the procedure given above, and say that $x^t$ is the \emph{lift} of $y^t$.

We would like to show that $x^t$ is independent of any choices. It will be sufficient to show that for the $x^t$ we have constructed, Equation \eqref{keyeqn} always holds. This was easy in the case that $x^t$ satisfied the hive inequalities. The more general case requires some argument.

Our strategy will be as follows: although for a general $y^t \in \Conf_8 V'(\Qt)$, the lift $x^t \in \Conf_8 \A'(\Qt)$ may not satisfy the hive inequalities, there is a point $y'^t \in \Conf^+_8 V'(\Qt)$ in the same lineality space as $y^t$ such that its lift $x'^t$ lies in $\Conf^+_8 \A'(\Qt)$ and thus satisfies the hive inequalities. An analysis of lineality spaces will then be enough to conclude that $x^t$ satisfies Equation \eqref{keyeqn}.

We now need to analyze the lineality space of $y^t$. For the moment we work in the general case of $\Gr(k,n)$. We begin by observing that there is a unique higher lamination $\{x_i\}$ such that $d(x_{i-1}, x_i) = \omega_1$ for $i=1, 2, \dots, k$ and $d(x_{i-1}, x_i) = 0$ otherwise. Let $l_1 \in \Conf^+_n \A'(\Qt)$ correspond to this higher lamination. We have the following lemma:

\begin{lemma} Consider the Pl\"ucker coordinate $A = A_1\cdots A_k$. Then 
$$A_1\cdots A_k (l_1) = \sum_{A_i < k} \frac{k-A_i}{k}.$$
\end{lemma}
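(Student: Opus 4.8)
The plan is to make the configuration $l_1$ completely explicit and then evaluate the Pl\"ucker coordinate $A_1\cdots A_k = f^t_{1\cdots1}$ on it by a direct computation with lattices. The first step is geometric: in the higher lamination $\{x_i\}$ defining $l_1$, the equalities $d(x_{i-1},x_i)=0$ for $k<i\le n$ force $x_k,x_{k+1},\dots,x_n$ to coincide, and since indices are cyclic this common vertex is $x_0$; meanwhile $x_0,x_1,\dots,x_{k-1}$ are the $k$ vertices of a single chamber of the affine building for $\PGL_k$, consecutive vertices being at distance $\omega_1$ around the cycle. Fixing a frame $e_1,\dots,e_k$ of $\cK^k$, an explicit model is $x_0=\langle e_1,\dots,e_k\rangle$ and $x_j=\langle t^{-1}e_1,\dots,t^{-1}e_j,e_{j+1},\dots,e_k\rangle$ for $1\le j\le k-1$; a direct check gives $d(x_{i-1},x_i)=\omega_1$ for every $i$, and in particular $d(x_0,x_j)=\omega_j$. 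Since $l_1\in\Conf^+_n\A'(\Qt)$ is a genuine positive configuration and the tropical functions depend only on its isometry type, it suffices to compute $f^t_{1\cdots1}$ using these representatives.

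Now fix a $k$-subset $\{A_1,\dots,A_k\}\subset[n]$ and put $s:=\#\{i:A_i<k\}$. By the $\PGL_k$ normalization of these functions (the case of \eqref{PGLfunction} in which all exponents equal $1$), $A_1\cdots A_k(l_1)=\tilde f^t_{1\cdots1}(L_{A_1},\dots,L_{A_k})+\frac1k\sum_i\val(\det L_{A_i})$ for any choice of representative lattices $L_{A_i}$; with the representatives above, $\val(\det L_j)=-j$ for $1\le j\le k-1$ and $0$ for $j\ge k$, so the correction term equals $-\frac1k\sum_{A_i<k}A_i$. It then remains to prove $\tilde f^t_{1\cdots1}(L_{A_1},\dots,L_{A_k})=s$. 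For the upper bound: every $L_{A_i}\subset t^{-1}\langle e_1,\dots,e_k\rangle$, and a generator of $L_{A_i}$ has valuation $\ge0$ in each coordinate unless $A_i<k$, in which case only the first $A_i$ coordinates may have valuation $-1$; expanding $\det(u_1,\dots,u_k)$ in the standard basis, every monomial has valuation $\ge -s$, so $\tilde f^t_{1\cdots1}\le s$. For the reverse bound, list the indices below $k$ as $a_1<a_2<\cdots<a_s$; being distinct positive integers they satisfy $a_j\ge j$, hence $t^{-1}e_j\in L_{a_j}$, and choosing $t^{-1}e_j$ from $L_{a_j}$ for $j=1,\dots,s$ together with $e_{s+1},\dots,e_k$ from the remaining lattices (each equal to $\langle e_1,\dots,e_k\rangle$) yields $\det=\pm t^{-s}$ with no possible cancellation, so $\tilde f^t_{1\cdots1}=s$. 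Combining, $A_1\cdots A_k(l_1)=s-\frac1k\sum_{A_i<k}A_i=\sum_{A_i<k}\frac{k-A_i}{k}$, which is the claim.

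The only genuinely delicate point is the lower-bound bookkeeping: one must observe that an $s$-element subset of $\{1,\dots,k-1\}$ listed in increasing order as $a_1<\cdots<a_s$ has $a_j\ge j$, so that $t^{-1}e_j$ is a legal choice from $L_{a_j}$ and the ``diagonal'' term of the determinant reaches valuation $-s$; everything else is routine. One should also keep the $\PGL_k$ normalization in view, since the natural representatives $L_j$ do not have determinant $1$ -- it is exactly the correction term $\frac1k\sum_i\val(\det L_{A_i})$ that converts the integer $s$ into the stated fraction. As an independent check, the metric description of Theorem~\ref{basicfn} gives $f^t_{1\cdots1}(x_{A_1},\dots,x_{A_k})=\min_p\sum_i d_1(p,x_{A_i})$, and the value $\sum_{A_i<k}d_1(x_0,x_{A_i})=\sum_{A_i<k}\frac{k-A_i}{k}$ is attained at $p=x_0$, while the determinant bound above shows no other $p$ improves it.
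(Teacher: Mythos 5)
Your proof is correct, but it takes a genuinely different route from the paper's. The paper never makes $l_1$ explicit as a configuration of lattices: it evaluates $F_1(l_1)$ using \eqref{frozens} as a base case, and then repeatedly invokes Lemma~\ref{key} to show that ``augmenting'' one entry $A_j\mapsto A_j+1$ (for $A_j<k$) decreases the Pl\"ucker value by exactly $\tfrac1k$, so that the value at any $A$ is $F_1(l_1)$ minus $\tfrac1k$ times the number of augmentations needed to reach $A$ from $(1,\dots,k)$. Your argument instead writes down an explicit lattice model $x_j=\langle t^{-1}e_1,\dots,t^{-1}e_j,e_{j+1},\dots,e_k\rangle$ for $1\le j\le k-1$, $x_j=\langle e_1,\dots,e_k\rangle$ for $j\ge k$, and computes $\tilde f^t_{1\cdots1}=s$ directly: the upper bound by term-by-term valuation bounds on the determinant expansion, the lower bound by the observation that distinct positive integers $a_1<\cdots<a_s<k$ satisfy $a_j\ge j$, so the ``diagonal'' choice $u_j=t^{-1}e_j\in L_{a_j}$ achieves valuation $-s$; the $\PGL_k$ normalization \eqref{PGLfunction} then converts $s$ into $\sum_{A_i<k}\tfrac{k-A_i}k$. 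Each approach has its merits: yours is self-contained and elementary (no reliance on Lemma~\ref{key} or \eqref{frozens}, and the verification via Theorem~\ref{basicfn} with $p=x_0$ is a nice cross-check), while the paper's stays entirely inside the algebraic machinery of Section~6 and makes the connection to the frozen variables $F_i$ manifest, which is what the surrounding argument (Corollary~\ref{lineality}) actually uses. One small caution in your write-up: be explicit that the extension of \eqref{PGLfunction} from three points to $m$ points is $f^t=\tilde f^t+\tfrac1k\sum_j\val(\det L_j)$; the paper only displays the three-point case, though the $m$-point version is the one you need.
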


\begin{proof} Our assumption on $\{x_i\}$ says that $a_1 = a_2 = \cdots a_k = 1$ while all other $a_j = 0$. Note that when $A_1,\dots,A_k = 1,\dots,k$, then $A$ is the frozen Pl\"ucker coordinate $F_1$.  Evaluating $F_1$ on $l_1$ using \eqref{frozens}, we get $\frac{1+2+\cdots+(k-1)}{k}$ which agrees with the claimed formula. 

Now consider an arbitrary Pl\:ucker coordinate $A$, with $A_1, \dots, A_i <k$ and $A_{i+1}, \dots, A_k \geq k$. Our assumption on $\{x_i\}$ implies that $x_k=x_{k+1}=\cdots = x_n$. So we can simplify $A_1^1\cdots A_k^1 = A_1^1 \cdots A_i^1 k^{k-i}.$ By judicious use of Lemma~\ref{key}, we can move the exponent $k-i$ leftward, decreasing this exponent at each step, and filling in any gaps in the numbers $A_1$ through $A_i$ as we go. We eventually arrive at $1^1 \cdots k^1$. To complete the proof, we need to analyze the multiples of $\frac{-1}{k}$ we absorb each time we apply Lemma~\ref{key}. We make the following observation. Suppose we replace a Pl\"ucker coordinate $A$ with a new Pl\"ucker coordinate $A' = A_1',\dots,A_k'$ by augmenting a single entry: $A_j' = A_j+1$ for some $j \leq i$. Then $A'_1 \cdots A'_i k^{k-i} = A_1 \cdots A_i k^{k-1}-\frac{1}{k}$. This is clear by comparing what happens as we move the exponents leftward. From this, and the base case $F_1(l_1) = \frac{1 + \dots k-1}{k}$, the result follows. 
\end{proof}

Let us rephrase this in a way that will be useful for us: if we take the point $0 \in \Conf_n V'(\Qt)$ and act by the element $\frac{1}{k}(k-1,\dots, 2, 1, 0, \dots, 0) \in T^n(\Qt)$, then we get the point $\pi(l_1) \in \Conf_n V'(\Qt)$. Now let $l_i \in \Conf^+_n \A'(\Qt)$ correspond to the unique higher lamination $\{x_i\}$ such that $d(x_{i-1}, x_i) = \omega_1$ for $i=i, 2, \dots, i+k-1$ and $d(x_{i-1}, x_i) = 0$ otherwise. 

\begin{cor}{\label{lineality}} Let $y^t \in \Conf_n V'(\Qt)$, and let $x^t \in \Conf_n \A'(\Qt)$ be its lift. Then the result of acting by 
$$\frac{1}{k}(k-1,\dots, 2, 1, 0, \dots, 0) \in T^n(\Qt)$$
(respectively, by an appropriate cyclic shift of $\frac{1}{k}(k-1,\dots, 2, 1, 0, \dots, 0)$) on $y^t$ is the same as adding the coordinates of $\pi(l_1)$ (resp., $\pi(l_i)$) to the coordinates of $y^t$. Call the resulting point $y^t + \pi(l_1)$ (resp., $y^t + \pi(l_i)$).

Moreover, the lift of $y^t + \pi(l_1)$ (resp., $y^t + \pi(l_i)$) comes from adding the coordinates of $l_1$ (respectively, $l_i$) to the coordinates of $x^t$. We will call the resulting tropical point $x^t + l_1$ (resp., $x^t + l_i$).
\end{cor}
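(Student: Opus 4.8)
The plan is to realize the lift map $y^t \mapsto x^t$ as a fixed $\Q$-linear map in a cluster chart, and then evaluate it on $\pi(l_1)$; the first assertion of the corollary is then essentially the rephrasing already made after the Lemma, extended over cyclic shifts.

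For the first assertion, recall that the action of $(t_1,\dots,t_n)\in T^n(\Qt)\cong\Q^n$ on a tropical point of $\Conf_n V'$ sends each tropical Pl\"ucker coordinate $P^t_I$ to $P^t_I+\sum_{i\in I}t_i$. Taking $(t_1,\dots,t_n)=\tfrac1k(k-1,k-2,\dots,1,0,\dots,0)$ gives $\sum_{i\in I}t_i=\sum_{A_j<k}\tfrac{k-A_j}{k}$, which is exactly the value $A_1\cdots A_k(l_1)$ computed in the Lemma preceding Corollary~\ref{lineality}. Hence acting by this element of $T^n(\Qt)$ adds the quantity $P^t_I(\pi(l_1))$ to each $P^t_I(y^t)$, i.e.\ it produces the coordinatewise sum $y^t+\pi(l_1)$. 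Applying the twisted cyclic shift automorphism $\rho$ of $\Conf_n V'$, which cyclically permutes the Pl\"ucker coordinates and permutes the laminations $l_j$ among themselves, transports this identity to $\pi(l_i)$ and the corresponding cyclic shift of $\tfrac1k(k-1,\dots,1,0,\dots,0)$.

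For the second assertion, the crux is that the lift $y^t\mapsto x^t$ is $\Q$-linear. Indeed, $x^t$ is produced by a fixed combinatorial recipe that does not depend on $y^t$: one first inverts \eqref{frozens} to write each boundary parameter $a_i$ as a fixed $\Q$-linear combination of the frozen coordinates $F_j(y^t)$, and then, working along the chosen triangulation, expresses every remaining cluster coordinate of $x^t$ as a single Pl\"ucker coordinate of $y^t$ plus a fixed $\Q$-linear combination of the $a_i$ --- each application of Lemma~\ref{key} contributes only an added term $\tfrac{a\,i_{m+1}}{k}$. Composing these two fixed linear assignments exhibits every coordinate of $x^t$ as a fixed $\Q$-linear function of the coordinates of $y^t$; in particular the zero point maps to the zero point, so the lift is linear, not merely affine.

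It then remains to identify $\mathrm{lift}(\pi(l_1))$. The higher lamination $l_1$ is an honest (non-virtual) positive configuration $\{x_i\}$ with $d(x_{i-1},x_i)=a_i\omega_1$, where $a_1=\dots=a_k=1$ and all other $a_j=0$; thus $l_1\in\Conf^+_n\A'(\Qt)$ and Equation~\eqref{keyeqn} holds for it by Lemma~\ref{key}. By the Proposition immediately preceding Corollary~\ref{lineality} (together with Proposition~\ref{uniquelift}), the coordinates of such an $l_1$ are determined by those of $\pi(l_1)$ by precisely the procedure defining the lift, so $\mathrm{lift}(\pi(l_1))=l_1$. Combining this with linearity gives $\mathrm{lift}(y^t+\pi(l_1))=\mathrm{lift}(y^t)+\mathrm{lift}(\pi(l_1))=x^t+l_1$, which is the claim; the cyclic-shift version follows by transporting along $\rho$ (resp.\ $\tau$). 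The one step I expect to require care --- the main obstacle --- is checking that the recipe producing $x^t$ really is a single fixed linear procedure: that the sequence of applications of Lemma~\ref{key} used to pin down a given coordinate involves no case division and records only additions of rational multiples of the $a_i$, so that one and the same linear formula applies to $y^t$, to $\pi(l_1)$, and to their sum.
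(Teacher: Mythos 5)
Your proof is correct and takes essentially the same approach as the paper. Where the paper simply asserts that the second statement ``follows from the linearity of the construction of $x^t$ from $y^t$,'' you usefully spell out why the lift is a fixed $\Q$-linear map (inverting \eqref{frozens} and iterating Lemma~\ref{key}), verify directly that $\mathrm{lift}(\pi(l_1))=l_1$, and correctly flag the one delicate point about the recipe being case-free, which the paper addresses when proving well-definedness of the lift in the proof of Proposition~\ref{uniquelift}.
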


The last statement of the above corollary follows from the linearity of the construction of $x^t$ from $y^t$.

The linear combinations $y^t + \sum c_i \pi(l_i)$ span the lineality space of $y^t$. Thus we have a different viewpoint on the action of the lineality space $T^n(\Qt)$: it is the space spanned by the $\pi(l_i)$.

\begin{rmk} Let us give some motivation for considering the space spanned by the $l_i$. Using the duality conjectures, the higher laminations $l_i$ correspond to the functions $F_i$. The corollary above states that moving around in the lineality space amounts to, on the dual side, multiplying by frozen variables. This statement is well known for the space $\Conf_n \A(\Qt)$, and the corollary is the analogous statement for $\Conf_n V'(\Qt)$. We expect a similar statement should hold more generally, for example for all positroid cells.
\end{rmk}

It will be more convenient to analyze the lineality space using the higher laminations $l_i$. We calculated the value of every Pl\"ucker coordinate on $l_i$, so we know the coordinates of $l_i$ in any cluster. Let us now return to the case of $\Gr(4,8)$ and look at the coordinates of the $l_i$ using the triangulation from Figure~\ref{fig:octagon}. The convex hulls of $l_1, l_2, \dots, l_8$ look as follows:

\begin{center}
\begin{tikzpicture}[scale=2]

  \node (3) at (0,1.4) {};
  \node [above] at (3) {$3$};

  \node (4) at (-1,1) {};
  \node [above, left] at (4) {$4$};

  \node (5) at (-1.4,0) {};
  \node [left] at (5) {$5$};

  \node (6) at (-1,-1) {};
  \node [below, left] at (6) {$6$};

  \node (7) at (0,-1.4) {};
  \node [below] at (7) {$7$};

  \node (8) at (1,-1) {};
  \node [below, right] at (8) {$8$};

  \node (1) at (1.4,0) {};
  \node [right] at (1) {$1$};

  \node (2) at (1,1) {};
  \node [above, right] at (2) {$2$};

  \draw [] (0,1.4) to (-1,1);
  \draw [] (-1,1) to (-1.4,0);
  \draw [] (-1.4,0) to (-1,-1);
  \draw [] (-1,-1) to (0,-1.4);
  \draw [] (0,-1.4) to (1,-1);
  \draw [] (1,-1) to (1.4,0);
  \draw [] (1.4,0) to (1,1);
  \draw [] (1,1) to (0,1.4);

  \draw [] (0,1.4) to (0,-1.4);
  \draw [] (0,1.4) to (-1,-1);
  \draw [] (0,1.4) to (-1.4,0);
  \draw [] (0,1.4) to (1,-1);
  \draw [] (0,1.4) to (1.4,0);

  \draw [line width=1.8pt] (0.75,1.1) to (0.95,0.6);
  \draw [line width=1.8pt] (1.3,0.25) to (0.95,0.6);
  \draw [line width=1.8pt] (0.7,0.7) to (0.95,0.6);

  \draw [line width=1.8pt] (0.7,0.7) to (0.6,0.45);
  \draw [line width=1.8pt] (1.1,-0.75) to (0.6,0.45);
  \draw [line width=1.8pt] (0.25,0.8) to (0.6,0.45);

  \draw [line width=1.8pt] (0.25,0.8) to (0,0.7);
  \draw [line width=1.8pt] (-0.25,0.8) to (0,0.7);
  \draw [line width=1.8pt] (-0.25,0.8) to (-0.35,1.05);
  \draw [line width=1.8pt] (-0.25,1.3) to (-0.35,1.05);

\end{tikzpicture} \quad
\begin{tikzpicture}[scale=2]

  \node (3) at (0,1.4) {};
  \node [above] at (3) {$3$};

  \node (4) at (-1,1) {};
  \node [above, left] at (4) {$4$};

  \node (5) at (-1.4,0) {};
  \node [left] at (5) {$5$};

  \node (6) at (-1,-1) {};
  \node [below, left] at (6) {$6$};

  \node (7) at (0,-1.4) {};
  \node [below] at (7) {$7$};

  \node (8) at (1,-1) {};
  \node [below, right] at (8) {$8$};

  \node (1) at (1.4,0) {};
  \node [right] at (1) {$1$};

  \node (2) at (1,1) {};
  \node [above, right] at (2) {$2$};

  \draw [] (0,1.4) to (-1,1);
  \draw [] (-1,1) to (-1.4,0);
  \draw [] (-1.4,0) to (-1,-1);
  \draw [] (-1,-1) to (0,-1.4);
  \draw [] (0,-1.4) to (1,-1);
  \draw [] (1,-1) to (1.4,0);
  \draw [] (1.4,0) to (1,1);
  \draw [] (1,1) to (0,1.4);

  \draw [] (0,1.4) to (0,-1.4);
  \draw [] (0,1.4) to (-1,-1);
  \draw [] (0,1.4) to (-1.4,0);
  \draw [] (0,1.4) to (1,-1);
  \draw [] (0,1.4) to (1.4,0);

  \draw [line width=1.8pt] (0.75,1.1) to (0.95,0.6);
  \draw [line width=1.8pt] (1.3,0.25) to (0.95,0.6);
  \draw [line width=1.8pt] (0.7,0.7) to (0.95,0.6);

  \draw [line width=1.8pt] (0.7,0.7) to (0.5,0.2);
  \draw [line width=1.8pt] (0,0) to (0.5,0.2);
  \draw [line width=1.8pt] (0,0) to (-0.5,0.2);
  \draw [line width=1.8pt] (-0.7,0.7) to (-0.5,0.2);

  \draw [line width=1.8pt] (-0.7,0.7) to (-0.6,0.95);
  \draw [line width=1.8pt] (-1.1,0.75) to (-0.6,0.95);
  \draw [line width=1.8pt] (-0.25,1.3) to (-0.6,0.95);

\end{tikzpicture}
\end{center}

\begin{center}
\begin{tikzpicture}[scale=2]

  \node (3) at (0,1.4) {};
  \node [above] at (3) {$3$};

  \node (4) at (-1,1) {};
  \node [above, left] at (4) {$4$};

  \node (5) at (-1.4,0) {};
  \node [left] at (5) {$5$};

  \node (6) at (-1,-1) {};
  \node [below, left] at (6) {$6$};

  \node (7) at (0,-1.4) {};
  \node [below] at (7) {$7$};

  \node (8) at (1,-1) {};
  \node [below, right] at (8) {$8$};

  \node (1) at (1.4,0) {};
  \node [right] at (1) {$1$};

  \node (2) at (1,1) {};
  \node [above, right] at (2) {$2$};

  \draw [] (0,1.4) to (-1,1);
  \draw [] (-1,1) to (-1.4,0);
  \draw [] (-1.4,0) to (-1,-1);
  \draw [] (-1,-1) to (0,-1.4);
  \draw [] (0,-1.4) to (1,-1);
  \draw [] (1,-1) to (1.4,0);
  \draw [] (1.4,0) to (1,1);
  \draw [] (1,1) to (0,1.4);

  \draw [] (0,1.4) to (0,-1.4);
  \draw [] (0,1.4) to (-1,-1);
  \draw [] (0,1.4) to (-1.4,0);
  \draw [] (0,1.4) to (1,-1);
  \draw [] (0,1.4) to (1.4,0);

  \draw [line width=1.8pt] (0.75,1.1) to (1.05,0.35);
  \draw [line width=1.8pt] (0.75,-0.4) to (1.05,0.35);
  \draw [line width=1.8pt] (0.75,-0.4) to (0,-0.7);
  \draw [line width=1.8pt] (-0.75,-0.4) to (0,-0.7);

  \draw [line width=1.8pt] (-0.75,-0.4) to (-0.95,0.1);
  \draw [line width=1.8pt] (-1.3,-0.25) to (-0.95,0.1);
  \draw [line width=1.8pt] (-0.7,0.7) to (-0.95,0.1);

  \draw [line width=1.8pt] (-0.7,0.7) to (-0.6,0.95);
  \draw [line width=1.8pt] (-1.1,0.75) to (-0.6,0.95);
  \draw [line width=1.8pt] (-0.25,1.3) to (-0.6,0.95);

\end{tikzpicture} \quad
\begin{tikzpicture}[scale=2]

  \node (3) at (0,1.4) {};
  \node [above] at (3) {$3$};

  \node (4) at (-1,1) {};
  \node [above, left] at (4) {$4$};

  \node (5) at (-1.4,0) {};
  \node [left] at (5) {$5$};

  \node (6) at (-1,-1) {};
  \node [below, left] at (6) {$6$};

  \node (7) at (0,-1.4) {};
  \node [below] at (7) {$7$};

  \node (8) at (1,-1) {};
  \node [below, right] at (8) {$8$};

  \node (1) at (1.4,0) {};
  \node [right] at (1) {$1$};

  \node (2) at (1,1) {};
  \node [above, right] at (2) {$2$};

  \draw [] (0,1.4) to (-1,1);
  \draw [] (-1,1) to (-1.4,0);
  \draw [] (-1.4,0) to (-1,-1);
  \draw [] (-1,-1) to (0,-1.4);
  \draw [] (0,-1.4) to (1,-1);
  \draw [] (1,-1) to (1.4,0);
  \draw [] (1.4,0) to (1,1);
  \draw [] (1,1) to (0,1.4);

  \draw [] (0,1.4) to (0,-1.4);
  \draw [] (0,1.4) to (-1,-1);
  \draw [] (0,1.4) to (-1.4,0);
  \draw [] (0,1.4) to (1,-1);
  \draw [] (0,1.4) to (1.4,0);

  \draw [line width=1.8pt] (-0.75,-0.4) to (-0.75,-1.1);

  \draw [line width=1.8pt] (-0.75,-0.4) to (-0.95,0.1);
  \draw [line width=1.8pt] (-1.3,-0.25) to (-0.95,0.1);
  \draw [line width=1.8pt] (-0.7,0.7) to (-0.95,0.1);

  \draw [line width=1.8pt] (-0.7,0.7) to (-0.6,0.95);
  \draw [line width=1.8pt] (-1.1,0.75) to (-0.6,0.95);
  \draw [line width=1.8pt] (-0.25,1.3) to (-0.6,0.95);

\end{tikzpicture}
\end{center}

\begin{center}
\begin{tikzpicture}[scale=2]

  \node (3) at (0,1.4) {};
  \node [above] at (3) {$3$};

  \node (4) at (-1,1) {};
  \node [above, left] at (4) {$4$};

  \node (5) at (-1.4,0) {};
  \node [left] at (5) {$5$};

  \node (6) at (-1,-1) {};
  \node [below, left] at (6) {$6$};

  \node (7) at (0,-1.4) {};
  \node [below] at (7) {$7$};

  \node (8) at (1,-1) {};
  \node [below, right] at (8) {$8$};

  \node (1) at (1.4,0) {};
  \node [right] at (1) {$1$};

  \node (2) at (1,1) {};
  \node [above, right] at (2) {$2$};

  \draw [] (0,1.4) to (-1,1);
  \draw [] (-1,1) to (-1.4,0);
  \draw [] (-1.4,0) to (-1,-1);
  \draw [] (-1,-1) to (0,-1.4);
  \draw [] (0,-1.4) to (1,-1);
  \draw [] (1,-1) to (1.4,0);
  \draw [] (1.4,0) to (1,1);
  \draw [] (1,1) to (0,1.4);

  \draw [] (0,1.4) to (0,-1.4);
  \draw [] (0,1.4) to (-1,-1);
  \draw [] (0,1.4) to (-1.4,0);
  \draw [] (0,1.4) to (1,-1);
  \draw [] (0,1.4) to (1.4,0);

  \draw [line width=1.8pt] (-1.1,0.75) to (-0.35,1.05);

  \draw [line width=1.8pt] (-0.35,1.05) to (-0.6,0.45);
  \draw [line width=1.8pt] (-1.3,-0.25) to (-0.6,0.45);
  \draw [line width=1.8pt] (-0.5,0.2) to (-0.6,0.45);

  \draw [line width=1.8pt] (-0.5,0.2) to (-0.5,-0.5);
  \draw [line width=1.8pt] (-0.75,-1.1) to (-0.5,-0.5);
  \draw [line width=1.8pt] (0,-0.7) to (-0.5,-0.5);

  \draw [line width=1.8pt] (0,-0.7) to (0.25,-1.3);

\end{tikzpicture} \quad
\begin{tikzpicture}[scale=2]

  \node (3) at (0,1.4) {};
  \node [above] at (3) {$3$};

  \node (4) at (-1,1) {};
  \node [above, left] at (4) {$4$};

  \node (5) at (-1.4,0) {};
  \node [left] at (5) {$5$};

  \node (6) at (-1,-1) {};
  \node [below, left] at (6) {$6$};

  \node (7) at (0,-1.4) {};
  \node [below] at (7) {$7$};

  \node (8) at (1,-1) {};
  \node [below, right] at (8) {$8$};

  \node (1) at (1.4,0) {};
  \node [right] at (1) {$1$};

  \node (2) at (1,1) {};
  \node [above, right] at (2) {$2$};

  \draw [] (0,1.4) to (-1,1);
  \draw [] (-1,1) to (-1.4,0);
  \draw [] (-1.4,0) to (-1,-1);
  \draw [] (-1,-1) to (0,-1.4);
  \draw [] (0,-1.4) to (1,-1);
  \draw [] (1,-1) to (1.4,0);
  \draw [] (1.4,0) to (1,1);
  \draw [] (1,1) to (0,1.4);

  \draw [] (0,1.4) to (0,-1.4);
  \draw [] (0,1.4) to (-1,-1);
  \draw [] (0,1.4) to (-1.4,0);
  \draw [] (0,1.4) to (1,-1);
  \draw [] (0,1.4) to (1.4,0);

  \draw [line width=1.8pt] (-1.3,-0.25) to (-0.25,0.8);

  \draw [line width=1.8pt] (-0.25,0.8) to (-0.25,0.1);
  \draw [line width=1.8pt] (-0.75,-1.1) to (-0.25,0.1);
  \draw [line width=1.8pt] (0,0) to (-0.25,0.1);

  \draw [line width=1.8pt] (0,0) to (0.25,-0.6);
  \draw [line width=1.8pt] (0.25,-1.3) to (0.25,-0.6);
  \draw [line width=1.8pt] (0.75,-0.4) to (0.25,-0.6);

  \draw [line width=1.8pt] (0.75,-0.4) to (1.1,-0.75);

\end{tikzpicture}
\end{center}

\begin{center}
\begin{tikzpicture}[scale=2]

  \node (3) at (0,1.4) {};
  \node [above] at (3) {$3$};

  \node (4) at (-1,1) {};
  \node [above, left] at (4) {$4$};

  \node (5) at (-1.4,0) {};
  \node [left] at (5) {$5$};

  \node (6) at (-1,-1) {};
  \node [below, left] at (6) {$6$};

  \node (7) at (0,-1.4) {};
  \node [below] at (7) {$7$};

  \node (8) at (1,-1) {};
  \node [below, right] at (8) {$8$};

  \node (1) at (1.4,0) {};
  \node [right] at (1) {$1$};

  \node (2) at (1,1) {};
  \node [above, right] at (2) {$2$};

  \draw [] (0,1.4) to (-1,1);
  \draw [] (-1,1) to (-1.4,0);
  \draw [] (-1.4,0) to (-1,-1);
  \draw [] (-1,-1) to (0,-1.4);
  \draw [] (0,-1.4) to (1,-1);
  \draw [] (1,-1) to (1.4,0);
  \draw [] (1.4,0) to (1,1);
  \draw [] (1,1) to (0,1.4);

  \draw [] (0,1.4) to (0,-1.4);
  \draw [] (0,1.4) to (-1,-1);
  \draw [] (0,1.4) to (-1.4,0);
  \draw [] (0,1.4) to (1,-1);
  \draw [] (0,1.4) to (1.4,0);

  \draw [line width=1.8pt] (-0.75,-1.1) to (0,0.7);

  \draw [line width=1.8pt] (0,0.7) to (0.25,0.1);
  \draw [line width=1.8pt] (0.25,-1.3) to (0.25,0.1);
  \draw [line width=1.8pt] (0.5,0.2) to (0.25,0.1);

  \draw [line width=1.8pt] (0.5,0.2) to (0.85,-0.15);
  \draw [line width=1.8pt] (1.1,-0.75) to (0.85,-0.15);
  \draw [line width=1.8pt] (1.05,0.35) to (0.85,-0.15);

  \draw [line width=1.8pt] (1.3,0.25) to (1.05,0.35);

\end{tikzpicture} \quad
\begin{tikzpicture}[scale=2]

  \node (3) at (0,1.4) {};
  \node [above] at (3) {$3$};

  \node (4) at (-1,1) {};
  \node [above, left] at (4) {$4$};

  \node (5) at (-1.4,0) {};
  \node [left] at (5) {$5$};

  \node (6) at (-1,-1) {};
  \node [below, left] at (6) {$6$};

  \node (7) at (0,-1.4) {};
  \node [below] at (7) {$7$};

  \node (8) at (1,-1) {};
  \node [below, right] at (8) {$8$};

  \node (1) at (1.4,0) {};
  \node [right] at (1) {$1$};

  \node (2) at (1,1) {};
  \node [above, right] at (2) {$2$};

  \draw [] (0,1.4) to (-1,1);
  \draw [] (-1,1) to (-1.4,0);
  \draw [] (-1.4,0) to (-1,-1);
  \draw [] (-1,-1) to (0,-1.4);
  \draw [] (0,-1.4) to (1,-1);
  \draw [] (1,-1) to (1.4,0);
  \draw [] (1.4,0) to (1,1);
  \draw [] (1,1) to (0,1.4);

  \draw [] (0,1.4) to (0,-1.4);
  \draw [] (0,1.4) to (-1,-1);
  \draw [] (0,1.4) to (-1.4,0);
  \draw [] (0,1.4) to (1,-1);
  \draw [] (0,1.4) to (1.4,0);

  \draw [line width=1.8pt] (0.75,1.1) to (0.95,0.6);
  \draw [line width=1.8pt] (1.3,0.25) to (0.95,0.6);
  \draw [line width=1.8pt] (0.7,0.7) to (0.95,0.6);

  \draw [line width=1.8pt] (0.7,0.7) to (0.6,0.45);
  \draw [line width=1.8pt] (1.1,-0.75) to (0.6,0.45);
  \draw [line width=1.8pt] (0.25,0.8) to (0.6,0.45);

  \draw [line width=1.8pt] (0.25,0.8) to (0.25,-1.3);

\end{tikzpicture}
\end{center}

\begin{figure}[h]
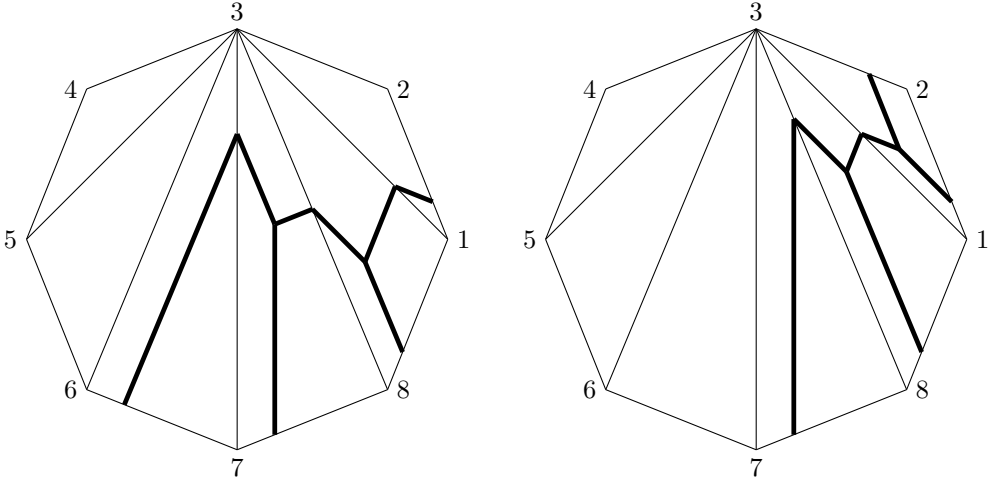


\caption{The convex hulls of the tropical points $l_1,\dots,l_8$ \label{fig:lsconvexhulls}}
\end{figure}

From this analysis, we can state our key observation.

\begin{observation} Take a point $x^t \in \Conf^+_8 \A'(\Qt)$ corresponding to a higher lamination $\{x_i\}$ such that $d(x_{i-1}, x_i) = a_i \omega_1$. The convex hull of the coordinates of $x^t$ will be as depicted in Figure~\ref{fig:xtyt} for each triangle in the triangulation of the octagon. Imagine gluing these six triangles together to re-form the octagon. Then the segments occurring in the this octagon are exactly the union of the bold segments in Figure~\ref{fig:lsconvexhulls}.
\end{observation}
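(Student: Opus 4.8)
The plan is to convert the claim into a statement about which hive inequalities are strict, and then to play the already-computed data for the configurations $l_i$ against the already-computed data for a generic configuration.

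First I would set up the following dictionary and reduction. In the puzzle attached to any tropical point, a segment is \emph{present} exactly when the hive inequality labelling it can hold strictly, and is \emph{absent} exactly when that inequality is forced to be an equality. Now let $x^t$ be as in the statement: a (non-virtual) positive configuration $\{x_i\}$ with $d(x_{i-1},x_i)=a_i\omega_1$ for all $i$, so in particular $x^t\in\Conf^+_n\A'(\Qt)$. By Lemma~\ref{key}, for each triangle of the triangulation of Figure~\ref{fig:octagon} the coordinates lying along any bold segment, or in any bold-outlined region, of Figure~\ref{fig:xtyt} are forced — via \eqref{keyeqn} — to be collinear, respectively coplanar; this is precisely the computation carried out above for the triangle $356$, done uniformly. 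Hence the convex hull of the coordinates of $x^t$ collapses at least as far as Figure~\ref{fig:xtyt} dictates, so if we write $A$ for the union, over the six triangles re-glued into the octagon, of the segments that still appear in Figure~\ref{fig:xtyt}, then every present segment of $x^t$ lies in $A$. Since $C:=\{x^t\in\Conf^+_n\A'(\Qt):d(x_{i-1},x_i)\in\Q_{\geq 0}\,\omega_1\ \forall i\}$ is a full-dimensional rational cone, a point in its relative interior makes every non-forced hive inequality strict, so its puzzle is exactly $A$. Writing $B$ for the union of the bold segments of $l_1,\dots,l_n$ in Figure~\ref{fig:lsconvexhulls}, it then remains to prove $A=B$.

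The inclusion $B\subseteq A$ is immediate from the reduction: each $l_i$ corresponds to an honest configuration $\{x_j\}$ with $d(x_{j-1},x_j)\in\{0,\omega_1\}$, hence $l_i\in C$, so by the previous paragraph every present segment of $l_i$ — and these are exactly the bold segments of $l_i$ recorded in Figure~\ref{fig:lsconvexhulls} — lies in $A$. Taking the union over $i$ gives $B\subseteq A$.

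The inclusion $A\subseteq B$ is the heart of the matter, and where I expect the real work to be. One must show that every segment $s\in A$ — equivalently, every hive inequality that is \emph{not} forced to an equality by the constraints $d(x_{i-1},x_i)=a_i\omega_1$ — is strict for at least one of the $l_i$. The tool is the Lemma computing every Pl\"ucker coordinate of $l_i$ in closed form: this pins down the entire convex hull of $l_i$, i.e.\ the picture for $l_i$ in Figure~\ref{fig:lsconvexhulls}, so one can simply read off which inequalities are strict there. One then verifies that, as $i$ runs over $1,\dots,n$, the present segments of $l_i$ form a contiguous band in the octagon whose location is governed by the cyclic support $\{i,i+1,\dots,i+k-1\}$ of the $a$-vector of $l_i$ relative to the triangulation, and that these bands collectively cover $A$; for $\Gr(4,8)$ this is the comparison of Figures~\ref{fig:xtyt} and~\ref{fig:lsconvexhulls}, and the general case is identical. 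The subtlety to be careful about is that the $l_i$ are \emph{not} the extreme rays of $C$ — a dimension count gives $\dim C=nk-k^2+1$, which for $\Gr(4,8)$ is $17$, far larger than $n=8$ — so the covering $A\subseteq B$ cannot be deduced formally from convexity and must be checked against the explicit hulls, which is feasible precisely because every coordinate of every $l_i$ is known. Granting this verification, $A=B$, which is the assertion of the Observation.
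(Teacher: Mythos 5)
Your proof is correct and, at bottom, amounts to the same finite verification that the paper leaves implicit: the Observation follows by comparing the puzzle of a generic constrained configuration (Figure~\ref{fig:xtyt}) against the puzzles of $l_1,\dots,l_8$ (Figure~\ref{fig:lsconvexhulls}), both of which the paper has already computed explicitly. The structure you impose---splitting into the easy inclusion $B\subseteq A$ (each $l_i$ lies in the constrained cone $C$, so its strict inequalities form a subset of $A$) and the substantive inclusion $A\subseteq B$ (which requires the explicit check because the $l_i$ span only the $n$-dimensional lineality space inside the $(nk-k^2+1)$-dimensional cone $C$, so convexity alone is not enough)---is a fair articulation of what the paper is doing and why the figure-comparison is genuinely necessary rather than formal.
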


We can now prove Proposition~\ref{uniquelift}.

\begin{proof}[Proof of Proposition~\ref{uniquelift} and Theorem~\ref{distinguished}]
Recall that in Figures \ref{fig:viewfromabove},\ref{fig:oneside}, \ref{fig:hulltwosides},\ref{fig:xtyt}, and \ref{fig:lsconvexhulls}, a segment in the interior of any triangle corresponds to one of the hive inequalities. In Figure~\ref{fig:lsconvexhulls}, the bold segments are associated to hive inequalities that hold strictly.

Suppose we start with any $y^t \in \Conf_8 V'(\Qt)$. Form its lift $x^t$. The segments in Figure~\ref{fig:xtyt} correspond to all the hive inequalities that need to hold in order for $x^t$ to satisfy the hive inequalities. The places where segments are missing are places where we already have equality for the hive inequalities for $x^t$' Using the above observation, we see that by adding a suitable positive linear combination of the $l_i$ to $x^t$, we get a point $x^t + \sum c_il_i$ that satisfies the hive inequalities. Moreover,  $x^t + \sum c_il_i$ is the lift of $y^t = \pi(\sum c_il_i)$. Thus our previous analysis tells us that Equation~\eqref{keyeqn} holds for $x^t + \sum c_il_i$. Note that Equation~\eqref{keyeqn} clearly holds for $\sum c_il_i$, so by linearity, Equation~\eqref{keyeqn} holds for $x^t$. This proves the existence of the lift of $y^t$. Thus we have a natural map
$$s: \Conf_8 V'(\Qt) \hookrightarrow \Conf_8 \A'(\Qt)$$
mapping $y^t$ to the unique $x^t$ such that $\pi(x^t)=y^t$ and $x^t$ satisfies Equation~\eqref{keyeqn}. This proves Proposition~\ref{uniquelift}. This means that among all $x^t \in \Conf_8 \A'(\Qt)$ such that $\pi(x^t)=y^t$, there is a distinguished representative satisfying Equation~\eqref{keyeqn}.

All that remains is to show that the inequalities defining $\Conf^+_n V'(\Qt) \subset \Conf_n V'(\Qt)$ are precisely Equation~\eqref{ineq}.

For any $y^t \in \in \Conf_8 V'(\Qt)$, $y^t$ is represented by a higher lamination $\{x_i\}$ if and only if $s(y^t)$ satisfies the hive inequalities. Now, in \cite{GS} it is shown that the hive inequalities are satisfied in an $n$-gon if and only they are satisfied in all the triangles involving consecutive vertices. But the analysis in Figure~\ref{fig:hulltwosides} showed that the hive inequalities on such triangles are given exactly by Equation~\eqref{ineq}. This completes the proof of our main theorem, Theorem~\ref{distinguished}.
\end{proof}

\section{Duality conjectures}\label{secn:duality}

The philosophy of \cite{FG2} tells us that cluster varieties come in pairs, as part of a cluster ensemble. A cluster ensemble consists of a pair of spaces, an $\A$-space and an $\X$-space. The duality conjectures state, roughly, that integral tropical points of one space parameterize a canonical basis of functions on the other space. These conjectures have been proven in many cases. Here we deal with a version of the conjectures that involves the $\A$-space on both sides. The statement results from the duality conjectures plus some additional analysis of frozen variables.

Here are the duality conjectures for the space $\Conf_n \A$.

\begin{theorem}\label{thm:duality} \cite{GS} The set $\Conf^+_n \A'(\Z^t)$ (cf.~\eqref{eq:hiveinequalities}) parameterizes a basis of functions in $\cO(\Conf_n \A)$. More specifically, suppose that  $x^t \in \Conf^+_n \A'(\Z^t)$ is given by a configuration $(x_1, x_2, \dots, x_n)$ in the building. Let $d(x_{i-1}, x_i) = \lambda_i$, where indices are taken cyclically$\mod n$. Then $x^t $ parameterizes a function that lies in the invariant space 
$$[V_{\lambda_1}^* \otimes V_{\lambda_2}^* \otimes \cdots \otimes V_{\lambda_n}^*]^G.$$
\end{theorem}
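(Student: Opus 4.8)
The plan is to reduce to a fixed multidegree, match a lattice-point count against a tensor multiplicity, and then promote that count to an honest basis using the geometric Satake correspondence (or, equivalently, cluster theta functions).

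First I would exploit the $H^n$-grading. Since $\A = G/U$ is the base affine space for $G=\SL_k$, one has the $H$-module decomposition $\cO(\A) = \bigoplus_\lambda V_\lambda^*$ over dominant coweights $\lambda$ (read as weights of $G$), hence $\cO(\A^n)=\bigoplus_{(\lambda_1,\dots,\lambda_n)} V_{\lambda_1}^* \otimes \cdots \otimes V_{\lambda_n}^*$, and taking $G$-invariants gives
\[
\cO(\Conf_n \A) = \bigoplus_{(\lambda_1,\dots,\lambda_n)} [V_{\lambda_1}^* \otimes \cdots \otimes V_{\lambda_n}^*]^G .
\]
On the tropical side, every $x^t \in \Conf^+_n\A'(\Zt)$ is a genuine configuration $(x_1,\dots,x_n)$ in the building, so it carries well-defined dominant boundary coweights $\lambda_i := d(x_{i-1},x_i)$, and this partitions $\Conf^+_n\A'(\Zt)$ into finite fibers $P_{(\lambda_1,\dots,\lambda_n)}$. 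It then suffices to produce, for each tuple, a basis of $[V_{\lambda_1}^* \otimes \cdots \otimes V_{\lambda_n}^*]^G$ indexed by $P_{(\lambda_1,\dots,\lambda_n)}$.

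Next I would identify each fiber with the lattice points of a hive polytope and count them. Fixing a triangulation of the $n$-gon, the hive inequalities of Section~6.2 together with the prescribed boundary coweights cut out a rational polytope whose integral points are exactly $P_{(\lambda_1,\dots,\lambda_n)}$. For $n=3$ this is the Knutson--Tao hive polytope, whose integral points number $\dim[V_{\lambda_1}^*\otimes V_{\lambda_2}^*\otimes V_{\lambda_3}^*]^G$ --- the saturation theorem, provable via the honeycomb model or via the Mirkovi\'c--Vilonen description of tensor multiplicities under geometric Satake. For general $n$ the internal edges $e$ of the triangulation carry intermediate dominant coweights $\mu_e$, and restricting a hive triangle-by-triangle yields
\[
\#\,P_{(\lambda_1,\dots,\lambda_n)} \;=\; \sum_{(\mu_e)} \ \prod_{T} \ \#\bigl(\text{integral hives on } T\bigr),
\]
over the $n-2$ triangles $T$; this matches the iterated decomposition of $\dim[V_{\lambda_1}^*\otimes\cdots\otimes V_{\lambda_n}^*]^G$ coming from associativity of $\otimes$ and the $n=3$ case, once one tracks that an internal edge feeds its two adjacent triangles through a flag and its reverse, carrying $V_{\mu_e}$ on one side and $V_{\mu_e}^* \cong V_{-w_0\mu_e}$ on the other (the identity $d(q,p) = -w_0 d(p,q)$). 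Triangulation-independence of the count, known for the hive cone, mirrors associativity and commutativity of the tensor product.

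Finally I would promote the count to a basis. Via geometric Satake, a point of $P_{(\lambda_1,\dots,\lambda_n)}$ --- a configuration of building vertices with prescribed pairwise distances --- determines an MV-type cycle in the relevant convolution affine Grassmannian whose fundamental class lies in $[V_{\lambda_1}^*\otimes\cdots\otimes V_{\lambda_n}^*]^G$; distinct points give distinct cycles, hence linearly independent classes (separated by their leading terms under the semi-infinite cell filtration, equivalently by degree vectors in a cluster chart), and by the count they form a basis, with the construction making the claimed invariant space manifest. Alternatively one may argue purely cluster-theoretically: $\Conf_n\A$ is an $\A$-cluster variety with Fock--Goncharov potential $\W$, its coordinate ring equals the (upper) cluster algebra, and by Gross--Hacking--Keel--Kontsevich the theta functions form a basis parameterized by tropical points of the mirror, the cone $\W^t \ge 0$ being exactly the locus of theta functions extending across the frozen divisors into $\cO(\Conf_n\A)$. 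The main obstacle is the counting step: the equality between the number of glued integral hives and $\dim[V_{\lambda_1}^*\otimes\cdots\otimes V_{\lambda_n}^*]^G$ is a saturation-type statement requiring genuine input (honeycomb technology, or the geometric Satake / MV-cycle dictionary) rather than formal manipulation; granting that, the grading, the gluing over triangulations, and the linear independence are comparatively routine.
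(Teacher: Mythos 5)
The paper does not prove this theorem: it is imported wholesale from Goncharov and Shen \cite{GS} and used as a black box, so there is no internal proof to compare against. That said, your sketch is a reasonable reconstruction of the strategy in \cite{GS}, which does indeed proceed by decomposing $\cO(\Conf_n\A)$ by the $H^n$-multidegree, identifying the fibers of the boundary-coweight map on the tropical side with integral points of (glued) hive polytopes, and then producing a basis in each multidegree via the geometric Satake correspondence and MV-cycle technology.

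Two things to tighten. First, the counting fact you need is the Knutson--Tao hive theorem (that the number of integral hives with prescribed boundary is the Littlewood--Richardson coefficient), not the saturation theorem; saturation is the statement that the LR-positivity cone is saturated, and it is a \emph{corollary} of the hive picture, not the ingredient you want here. Second, and more substantively, the step ``promote the count to a basis'' is where nearly all the mathematical content sits. Equality of cardinalities alone does not produce a basis, and the linear independence you assert by gesturing at ``leading terms under the semi-infinite cell filtration'' or ``degree vectors in a cluster chart'' is exactly what requires the full MV-cycle / geometric Satake apparatus (or, on the cluster side, GHKK positivity and convergence of the scattering diagram, together with a green-to-red sequence for $\Conf_n\A$). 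You acknowledge this, which is fair, but a reader should not come away thinking this is the routine part. Finally, a small but real bookkeeping issue: the theorem is stated for $\Conf^+_n\A'(\Z^t)$ with $\A'=\PGL_k/U$, precisely because hive coordinates of integral configurations can pick up denominators $\tfrac{1}{k}$ (cf.\ the remark in Section~7); a careful proof must track the $\SL_k$ versus $\PGL_k$ integral structures, and your sketch elides this.
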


Let us rephrase this theorem. Define 
$$\Conf^+_n \A'(\Z^t)(\lambda_1, \dots, \lambda_n) \subset \Conf^+_n \A'(\Z^t)$$ to be the set of higher laminations 
$$(x_1, x_2, \dots, x_n)$$ such that $d(x_{i-1}, x_i) = \lambda_i$. Then an equivalent form of the conjecture is that $$\Conf^+_n \A'(\Z^t)(\lambda_1, \dots, \lambda_n)$$ parameterizes a basis of functions in 
$$[V_{\lambda_1}^* \otimes V_{\lambda_2}^* \otimes \cdots \otimes V_{\lambda_n}^*]^G,$$
as the space of functions $\cO(\Conf_n \A)$ is naturally a direct sum over all such invariant spaces.

We have an analogous statement for the space $\Conf_n V$. Let $a_1, \dots, a_n$ be positive integers such that $\sum a_i$ is a multiple of $k$. Then Equation~\eqref{frozens} give us values for the frozen variables $F_i$. Define 

$$\Conf^+_n V'(\Zt)(a_1, \dots, a_n) \subset \Conf^+_n V'(\Zt)$$
to be the set of tropical points with these prescribed values for $F_i$. Then $y^t \in \Conf^+_n V'(\Zt)(a_1, \dots, a_n)$ corresponds to a higher lamination 
$$(x_1, x_2, \dots, x_n)$$ such that $d(x_{i-1}, x_i) = a_i \omega_1$. (Note that we are using integrality of the $a_i$ to conclude integrality of $s(y^t)$ using Equation \eqref{keyeqn}. Also note that the coordinates for hives are coordinates on $\Conf^+_n \A$, so they will have denominators of $\frac{1}{k}$ for integral points of $\Conf^+_n \A'$.) By Theorem~\ref{thm:duality}, this corresponds to an invariant in the space
$$[V_{a_1\omega_1}^* \otimes V_{a_2\omega_1}^* \otimes \cdots \otimes V_{a_n \omega_1}^*]^G.$$
Now the functions in $\cO(\Conf_n V)$ are a direct sum over all such invariant spaces.

\begin{rmk} Not all integral points of $\Conf^+_n V'(\Zt)$ will occur as $\Conf^+_n V'(\Zt)(a_1, \dots, a_n)$ for some integer $a_i$. This is because inverting Equation~\eqref{frozens} introduces denominators. 
\end{rmk}

There is an integral structure on $\Conf^+_n V'(\Qt)$ such that its integral points are precisely those that occur as $\Conf^+_n V'(\Zt)(a_1, \dots, a_n)$ for some integer $a_i$. Call this space $\Conf^{*,+}_n V'$. Then we have

\begin{theorem}
$\Conf^{*,+}_n V' (\Zt)$ parameterizes a basis of functions in $\cO(\Conf_n V)$. If $y^t \in \Conf^+_n V'(\Zt)(a_1, \dots, a_n)$, then $y^t$ corresponds to a higher lamination 
$$(x_1, x_2, \dots, x_n)$$ such that $d(x_{i-1}, x_i) = a_i \omega_1$. Moreover, $y^t$ parameterizes a function in the invariant space
$$[V_{a_1\omega_1}^* \otimes V_{a_2\omega_1}^* \otimes \cdots \otimes V_{a_n \omega_1}^*]^G.$$
\end{theorem}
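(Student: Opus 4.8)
The plan is to reduce the statement to the duality conjectures for $\Conf_n\A$ (Theorem~\ref{thm:duality}), transported through the distinguished section $s$ constructed in Theorem~\ref{distinguished}, by matching graded pieces on the two sides.

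\emph{Step 1: representation-theoretic matching.} Recall the decompositions $\cO(\Conf_n\A)=\bigoplus_{\vec\lambda}[V_{\lambda_1}^*\otimes\cdots\otimes V_{\lambda_n}^*]^G$, the sum over all tuples of dominant weights, and $\cO(\Conf_n V)=\bigoplus_{\vec a}[V_{a_1\omega_1}^*\otimes\cdots\otimes V_{a_n\omega_1}^*]^G$, the sum over tuples $\vec a=(a_1,\dots,a_n)$ of nonnegative integers with $\sum_i a_i\equiv 0\pmod k$ (exactly the multidegrees on which the $SL_k$-invariants are nonzero). The inclusion $\pi^*\colon\cO(\Conf_n V)\hookrightarrow\cO(\Conf_n\A)$ intertwines the $T^n$-action with the $H^n$-action via $\psi=\omega_1^n\colon H^n\to T^n$, so it carries the $\vec a$-graded summand of $\cO(\Conf_n V)$ into the $(a_1\omega_1,\dots,a_n\omega_1)$-graded summand of $\cO(\Conf_n\A)$. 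Since $\pi$ is dominant and $\cO(\A)_{a\omega_1}$ consists precisely of the degree-$a$ polynomial functions of the first vector of a flag, this restricted map is an isomorphism onto that summand; thus $\pi^*$ identifies the $\vec a$-summand of $\cO(\Conf_n V)$ with the summand $[V_{a_1\omega_1}^*\otimes\cdots\otimes V_{a_n\omega_1}^*]^G$ of $\cO(\Conf_n\A)$.

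\emph{Step 2: the section is a bijection on the relevant lattices.} The claim is that, for each admissible $\vec a$, the section $s$ restricts to a bijection
$$\Conf^+_n V'(\Zt)(a_1,\dots,a_n)\ \xrightarrow{\ \sim\ }\ \Conf^+_n\A'(\Zt)(a_1\omega_1,\dots,a_n\omega_1),$$
with inverse $\pi^t$. That $s(y^t)$ lies in $\Conf^+_n\A'(\Qt)$, and that over all $\vec a$ the image of $s$ is exactly the set of $x^t$ with each $d(x_{i-1},x_i)$ a multiple of $\omega_1$, is Theorem~\ref{distinguished}; the remaining content is the integral structure. If $y^t\in\Conf^+_n V'(\Zt)(a_1,\dots,a_n)$ with the $a_i$ integers, then Equation~\eqref{keyeqn} writes every hive coordinate of $x^t=s(y^t)$ as a tropical Pl\"ucker coordinate of $y^t$ plus a rational number with denominator dividing $k$, which is precisely the integrality required of a point of $\Conf^+_n\A'(\Zt)$ (cf.~the remark preceding the theorem); $x^t$ satisfies the hive inequalities, so $x^t\in\Conf^+_n\A'(\Zt)$, and \eqref{frozens} reads off $d(x_{i-1},x_i)=a_i\omega_1$. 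Conversely, if $x^t\in\Conf^+_n\A'(\Zt)$ has $d(x_{i-1},x_i)=a_i\omega_1$ (so each $a_i\in\Z_{\geq 0}$), then $x^t$ obeys \eqref{keyeqn} by Lemma~\ref{key}, hence $x^t=s(\pi^t(x^t))$; evaluating the frozen variables on $y^t:=\pi^t(x^t)$ and inverting \eqref{frozens} recovers the integers $a_i$, so $y^t\in\Conf^+_n V'(\Zt)(a_1,\dots,a_n)$, and in particular $y^t$ is integral for the $*$-structure. Injectivity is immediate from $\pi^t\circ s=\mathrm{id}$.

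\emph{Step 3: assembly.} For fixed $\vec a$, Theorem~\ref{thm:duality} gives that $\Conf^+_n\A'(\Zt)(a_1\omega_1,\dots,a_n\omega_1)$ parameterizes a basis of $[V_{a_1\omega_1}^*\otimes\cdots\otimes V_{a_n\omega_1}^*]^G\subset\cO(\Conf_n\A)$; transporting through the isomorphism of Step~1 and the bijection of Step~2, $\Conf^+_n V'(\Zt)(a_1,\dots,a_n)$ parameterizes a basis of the $\vec a$-summand of $\cO(\Conf_n V)$, each $y^t$ labeling a function in $[V_{a_1\omega_1}^*\otimes\cdots\otimes V_{a_n\omega_1}^*]^G$ and corresponding, via $s$, to the higher lamination $(x_1,\dots,x_n)$ with $d(x_{i-1},x_i)=a_i\omega_1$. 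Summing over all admissible $\vec a$ and using $\Conf^{*,+}_n V'(\Zt)=\bigsqcup_{\vec a}\Conf^+_n V'(\Zt)(a_1,\dots,a_n)$ (which is how the $*$-structure was defined) yields the theorem. The step I expect to be the main obstacle is the integrality bookkeeping in Step~2: checking that \eqref{keyeqn} together with integrality of the $a_i$ is both necessary and sufficient for $s(y^t)$ to be an integral point of $\Conf^+_n\A'$, i.e.\ that the finite-index sublattice of $\Conf_n V'(\Qt)$ which $s$ is forced to respect is exactly the $*$-lattice and not some other one.
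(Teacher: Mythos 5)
Your proposal is correct and follows the same route the paper intends: the paper presents this theorem as a direct consequence of the preceding discussion — matching the weight decompositions of $\cO(\Conf_n V)$ and $\cO(\Conf_n\A)$ under $\pi^*$, using the distinguished section $s$ of Theorem~\ref{distinguished} together with Equations~\eqref{frozens} and \eqref{keyeqn} to identify $\Conf^+_n V'(\Zt)(a_1,\dots,a_n)$ with $\Conf^+_n\A'(\Zt)(a_1\omega_1,\dots,a_n\omega_1)$, and then invoking Theorem~\ref{thm:duality} — but gives that argument only informally rather than as a formal proof. Your write-up makes explicit exactly the points the paper leaves to the reader (the isomorphism of graded pieces in Step~1, the forward/backward integrality bookkeeping in Step~2, and the tautological role of the $*$-lattice), and the obstacle you flag at the end is precisely the subtlety the paper disposes of by definition of $\Conf^{*,+}_n V'$.
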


Finally, let us compare our results to those in \cite{RW}. In that paper, the authors show that a certain polytope $NO_G^r$, given by equalities similar to Equation~\eqref{ineq}, parameterizes bases in the spaces of invariants
$$[V_{a_1\omega_1}^* \otimes V_{a_2\omega_1}^* \otimes \cdots \otimes V_{a_n \omega_1}^*]^G$$
where $\sum a_i = r$. Thus, one can view our results as a refinement of the corresponding statement in \cite{RW} (Theorem 5.9).

Let us elaborate on this. The condition $\sum a_i = r$ defines a linear subspace of $\Conf_n V' (\Rt)$. The intersection of this subspace with the cone $\Conf^{+}_n V' (\Rt)$ gives the polytope $NO_G^r$. The integral points of this polytope are the intersection of the subspace $\sum a_i = r$ with $\Conf^{*,+}_n V' (\Zt)$. If we additionally fix the $a_i$, we get smaller dimensional subspaces. We can intersect these subspaces with $\Conf^{+}_n V' (\Rt)$ to get a polytope whose integral points are $\Conf^+_n V'(\Zt)(a_1, \dots, a_n)$. These polytopes are slices of the polytope $NO_G^r$.

It should be possible to explicitly derive the inequalities in \cite{RW} from our inequalities in Equation~\eqref{ineq}. The tropical points in \cite{RW} are slightly different than ours, so the literal polytopes $\{\sum a_i = r\} \cap \Conf_n V' (\Rt)$ and $NO_G^r$ differ by a linear transformation. However, the $\X$-coordinates (defined in the following section) should be the same for all the points in the polytopes, as corresponding points differ by a transformation the action of the lineality space.

We propose the following bijection. We will need to make a construction dual to that in the previous Section 6. The constructions differ by a DT-transformation, in a sense. There is another section, 
$$s^\dagger: \Conf_n V'(\Qt) \hookrightarrow \Conf_n \A'(\Qt)$$
defined so that it maps a tropical point $y^t \in \Conf^{\dagger,+}_n V'(\Qt)$ to $x^t \in \Conf^+_n \A'(\Qt)$ corresponding to a higher lamination $\{x_i\}$ such that 
$d(x_{i+1}, x_{i}) = a_i \omega_1$. Then we can define $\Conf^{\dagger,+}_n V'(\Zt)(a_1, \dots, a_n)$ as before.

\begin{conj} Let $y^t \in \Conf^{\dagger,+}_n V'(\Zt)(a_1, \dots, a_n)$. Then act on $y^t$ by the element
$$\frac{1}{k}(b_1, \dots, b_n) \in T^n(\Zt)$$
where $$b_i = \sum_{j < i} a_j.$$
The result will be the corresponding point in $NO_G^r$.
\end{conj}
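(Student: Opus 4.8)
The plan is to reduce the conjecture to an explicit comparison of two polytopes sitting in a common cluster chart and related by the prescribed translation. Fix positive integers $a_1,\dots,a_n$, let $r$ be the quantity $\sum a_i$ they determine, and work in the standard Grassmannian cluster of Figure~\ref{Gr48Seed}, generalised to $\Gr(k,n)$, so that the tropical Pl\"ucker coordinates $P_I^t$ are coordinates on $\Conf_n V'(\Qt)$. On one side, intersecting $NO_G^r$ with the affine subspace on which the frozen coordinates take the values dictated by \eqref{frozens} for this tuple yields a polytope whose lattice points index a basis of $[V_{a_1\omega_1}^* \otimes \cdots \otimes V_{a_n\omega_1}^*]^G$; in the chosen chart its facets are cut out by the inequalities of \cite{RW}~(5.1), rewritten in this chart via a linear coordinate change. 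On the other side, the dual version of the Section~6 analysis --- in which the section $s$ (imposing $d(x_{i-1},x_i) = a_i\omega_1$) is replaced by $s^\dagger$ (imposing $d(x_{i+1},x_i) = a_i\omega_1$), the two differing by the DT cluster automorphism of \cite{MS} --- identifies $\Conf^{\dagger,+}_n V'(\Zt)(a_1,\dots,a_n)$ with the lattice points of the polytope cut out in the same chart by the cyclically reversed form of \eqref{ineq}, and by the DT-twisted form of the duality statement of Section~7 these lattice points also index a basis of $[V_{a_1\omega_1}^* \otimes \cdots \otimes V_{a_n\omega_1}^*]^G$. Everything then reduces to showing that translation by $\tfrac1k(b_1,\dots,b_n)$, with $b_i = \sum_{j<i} a_j$, carries the one polytope onto the other.

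For the comparison, recall that $T^n$ acts on $\Conf_n V'$ by rescaling the $n$ ambient vectors, so its tropicalisation acts on Pl\"ucker coordinates by $P_I^t \mapsto P_I^t + \tfrac1k\sum_{i\in I} b_i$. The tropical $\X$-coordinates are the lineality-invariant monomials in the $P_I^t$ --- these are the coordinates on the $\X$-variety $\Conf_n \mathbb{P}^{k-1}$ of Section~8 --- so the translation fixes the $\X$-point of $y^t$, and, granting that it respects integral structures, it restricts to an isomorphism between the slices of the two polytopes lying over any fixed point of $\Conf_n \mathbb{P}^{k-1}(\Zt)$. Integrality is exactly the purpose of the $*$-integral structure on $\Conf^{*,+}_n V'$, designed to absorb the denominators $\tfrac1k$ produced by inverting \eqref{frozens}. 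The substantive step is to substitute the displayed rule into the reversed form of \eqref{ineq} and match it term by term against \cite{RW}~(5.1): the iterative relations of Lemma~\ref{key}, used earlier to derive \eqref{frozens} and \eqref{ineq}, identify every Pl\"ucker coordinate occurring in either inequality, and the increment $\tfrac1k\sum_{i\in I} b_i$ is arranged so that the ``slope $\tfrac{a}{k}$'' corrections from Lemma~\ref{lem:keyABC} on the two sides cancel, turning one family of inequalities into the other. One also checks that the translation sends the subspace where the $F_i$ are prescribed by \eqref{frozens} for $(a_i)$ onto the subspace of $NO_G^r$ recording the same graded component, so that the $a_i$-slices, not just the ambient cones, are matched.

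Granting this, the conjecture follows: a lattice point of $\Conf^{\dagger,+}_n V'(\Zt)(a_1,\dots,a_n)$ and its translate in $NO_G^r$ have the same $\X$-coordinates and lie in the same graded piece, and since both parametrisations of $[V_{a_1\omega_1}^* \otimes \cdots \otimes V_{a_n\omega_1}^*]^G$ are governed by the same cluster $\X$-variety --- one through higher laminations and the duality conjectures of \cite{FG1}, \cite{GS}, the other through the Marsh--Rietsch superpotential and mirror symmetry --- corresponding lattice points correspond to the same basis element. Assembling these slice-wise identifications over all tuples with $\sum a_i = r$ gives the identification of $NO_G^r$ asserted in the conjecture.

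The main obstacle I expect is reconciling conventions, largely bookkeeping but genuinely delicate. One must pin down which seed underlies the valuation of \cite{RW} and relate it to the standard Grassmannian cluster, tracking any $\max$-versus-$\min$ and sign discrepancy in the tropicalisation conventions as well as the linear coordinate change between their ambient space and tropical Pl\"ucker space. One must confirm that the DT transformation of \cite{MS} is precisely the modification turning $s$ into $s^\dagger$ --- that replacing $d(x_{i-1},x_i)$ by $d(x_{i+1},x_i)$ implements DT at the level of tropical coordinates --- since it is this asymmetry that forces the triangular $b_i = \sum_{j<i} a_j$ rather than a symmetric choice. Finally, \cite{RW} cut their polytope out using the entire Marsh--Rietsch superpotential, which contributes more inequalities than appear in \eqref{ineq}; one must check that the extra ones are either redundant or equivalent to the tropical Pl\"ucker relations already imposed by working inside $\Conf_n V'(\Zt)$. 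Once these dictionaries are fixed, only the routine substitution above remains.
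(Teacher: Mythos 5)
The statement you were asked to prove is labelled \emph{Conjecture} in the paper, and the authors provide no proof: they write only that ``it should be possible to explicitly derive the inequalities in \cite{RW} from our inequalities in Equation~\eqref{ineq}'' and that the constructions $s$ and $s^\dagger$ ``differ by a DT-transformation, in a sense,'' leaving both claims unsubstantiated. So there is no argument in the paper to compare yours against; what you have produced is a programme, not a proof, and you are honest that it is so.

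The programme itself is sensible and, as far as I can tell, is what the authors have in mind: pass to the standard Grassmannian cluster, compute that translation by $\frac1k(b_1,\dots,b_n)$ shifts $P_I^t$ by $\frac1k\sum_{i\in I}b_i$ and hence fixes the $\X$-point, then match the translated, cyclically reversed form of \eqref{ineq} term by term against \cite{RW}~(5.1). But every step you flag as ``to be checked'' is a genuine gap, and collectively they are the entire content of the conjecture. First, your claim that replacing $s$ by $s^\dagger$ (i.e.\ $d(x_{i-1},x_i)\mapsto d(x_{i+1},x_i)$) implements the DT transformation of \cite{MS} is asserted, not shown; the paper only says the two constructions differ by a DT-transformation ``in a sense,'' which is not a proof, and without this identification the appeal to a ``DT-twisted duality statement'' for $\Conf^{\dagger,+}_n V'$ has no ground to stand on, since the paper only proves the duality statement (Theorem~\ref{thm:duality} and its $V$-version) for $s$. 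Second, the term-by-term matching of the translated inequalities with \cite{RW}~(5.1) is the computational heart of the conjecture and is not carried out: you observe that the $\frac{a}{k}$-corrections from Lemma~\ref{lem:keyABC} ``are arranged so that they cancel,'' but this is precisely what would need to be verified, and the asymmetry $b_i=\sum_{j<i}a_j$ (rather than, say, $\sum_{j\le i}$ or a symmetric expression) is exactly the kind of detail that such a computation either confirms or refutes. Third, your assumption that the Rietsch--Williams superpotential yields no inequalities beyond the cyclic family in \eqref{ineq} is nontrivial; \cite{RW} tropicalise a full superpotential with one summand per frozen variable plus an additional twist term, and showing the extra inequalities are implied by \eqref{ineq} (or by integrality of the $P_I^t$) requires an argument. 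Finally, the phrase ``the corresponding point in $NO_G^r$'' in the conjecture is itself only made precise once one has fixed a bijection of bases of $[V_{a_1\omega_1}^*\otimes\cdots\otimes V_{a_n\omega_1}^*]^G$, which is a further appeal to duality that your sketch invokes but does not verify. None of this means your route is wrong --- it is likely the right route --- but as it stands nothing has been proved that was not already asserted without proof in the paper.
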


\section{The \texorpdfstring{$\X$}{X}-space and its tropicalization}

We now describe the $\X$-variety associated to $\tGr(k,n) = \Conf_n V$. In particular we justify our previous claim that the $\X$-variety is $\Conf_n \mathbb{P}(V) = \Conf_n \mathbb{P}^{k-1}$. This turns out to be fairly straightforward.

\begin{theorem} $\Conf_n \mathbb{P}^{k-1}$ has the structure of a cluster $\X$-variety. This $\X$-variety, together with the $\A$-variety $\Conf_n V$, forms a cluster ensemble.
\end{theorem}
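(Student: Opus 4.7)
The plan is to realize $\Conf_n \mathbb{P}^{k-1}$ as the quotient of $\Conf_n V \simeq \tGr(k,n)$ by the natural action of the torus $T^n$ rescaling each of the $n$ vectors, and to check that this quotient carries the structure of the cluster $\X$-variety paired to the $\A$-variety $\Conf_n V$. The construction of the $\X$-variety from a cluster $\A$-variety is standard: in any seed with exchange matrix $B=(b_{ij})$ one sets $X_i = \prod_j A_j^{b_{ij}}$, and the resulting map $p^\ast$ on functions comes from a quotient map of tori $p: T_\A \twoheadrightarrow T_\X$; the $\X$-variety is, cluster chart by cluster chart, the quotient of $T_\A$ by $H_\A := \ker(p)$.

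First I would fix any Pl\"ucker cluster on $\Conf_n V$, for instance the one depicted in Figure~\ref{Gr48Seed}. Every cluster variable in such a cluster is a Pl\"ucker coordinate $P_J$ of multidegree $\mathbf{1}_J \in \Z^n$ under the $T^n$-action, so the condition that a Laurent monomial $\prod_J P_J^{c_J}$ is $T^n$-invariant is precisely that $\sum_{J \ni i} c_J = 0$ for each $i$. A direct rank computation from the explicit quiver shows that this invariance lattice coincides with the image of the $\X$-character lattice under $p^\ast$; equivalently, $H_\A$ is exactly the rank-$n$ torus acting geometrically by rescaling vectors. The $\X$-coordinates $X_i$ are therefore monomials of total $T^n$-weight zero, and so descend to well-defined rational functions on the naive quotient $\Conf_n V / T^n = \Conf_n \mathbb{P}^{k-1}$, endowing it with an $\X$-torus chart associated to the chosen Pl\"ucker cluster.

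It remains to check that these charts glue consistently across different seeds and that the pair $(\Conf_n V,\,\Conf_n \mathbb{P}^{k-1})$ satisfies the axioms of a cluster ensemble. Since the $\A \to \X$ map intertwines $\A$-mutation with the standard $\X$-mutation rule (Fock--Goncharov's basic compatibility), the transition maps among these $\X$-tori on $\Conf_n \mathbb{P}^{k-1}$ are exactly $\X$-mutations, giving it the structure of a cluster $\X$-variety. The main technical step is the rank verification identifying $H_\A$ with $T^n$; this reduces, in some convenient Pl\"ucker cluster, to checking that the map $\Z^n \to \Z^{\text{cluster vars}}$ sending $e_i$ to $\sum_{J \ni i} e_J$ has rank $n$ and is complementary to the image of $B^T$, which is a direct combinatorial check using the explicit quivers of Figures~\ref{Gr48Seed}--\ref{Gr49seed} and their evident generalization. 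Everything else follows formally from the cluster ensemble construction applied to the Grassmannian cluster structure.
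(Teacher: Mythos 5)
Your argument follows essentially the same route as the paper: you pull back the $\X$-coordinates via $p^*(X_i)=\prod_j A_j^{b_{ij}}$, verify $T^n$-invariance in a single Pl\"ucker cluster, and identify $\ker(p)$ with $T^n$ by a rank/dimension count, so the $X_i$ descend to $\Conf_n V/T^n=\Conf_n\mathbb{P}^{k-1}$, with the chart-gluing handled by the standard compatibility of $p$ with mutation. This matches the paper's proof, with your multidegree bookkeeping simply making the paper's ``easy to check'' invariance and dimension count explicit.
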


Suppose we have a cluster $\A$-variety with seed $\Sigma=(I, I_0,B, d)$. Then for every non-frozen index $i \in I$, there is a cluster variable $X_i$ on the $\X$-variety. There is a map of algebraic torii $p: \A_{\Sigma} \to \X_{\Sigma}$ given by 
$$p^*(X_i) = \prod_{j \in I}A_j^{B_{ij}}.$$

The functions $p^*(X_i)$ a priori live on $\Conf_n V$. However, it is easy to check that they are invariant under the action of $T^n$. We can check this in any one cluster (for example, the one in Figure 1 for $\tGr(4,8)$), and then it will automatically hold for any other cluster. A dimension count shows that $T^n$ is precisely the kernel of the map $p$.

Thus the functions $X_i$ can be viewed as functions on $\Conf_n V / T^n = \Conf_n \mathbb{P}^{k-1}$ and $\X_{\Sigma}$ must be dense in $\Conf_n \mathbb{P}^{k-1}$.

Let $\Conf_n \mathbb{P}^{k-1}(\Zt)$ be the set of integral positive tropical points of $\Conf_n \mathbb{P}^{k-1}$. We have that

$$\Conf_n \mathbb{P}^{k-1}(\Zt) = \Conf_n V(\Zt) / T^n(\Zt).$$ 

This gives us the following statement:

\begin{theorem}
The space $\Conf_n \mathbb{P}^{k-1}(\Zt)$ parameterizes the lineality spaces of $\Conf_n V(\Zt)$.
\end{theorem}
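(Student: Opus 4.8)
The plan is to reduce the statement to the identity $\Conf_n \mathbb{P}^{k-1}(\Zt) = \Conf_n V(\Zt)/T^n(\Zt)$ already displayed, and then to unwind what ``lineality space'' means for $\Conf_n V(\Zt)$. Recall that the lineality space of a tropical point $y^t \in \Conf_n V(\Zt)$ was defined to be its orbit under the action of $T^n(\Zt)$; hence the set of lineality spaces is, by definition, the quotient $\Conf_n V(\Zt)/T^n(\Zt)$. So the theorem follows once we verify the displayed equality, and the real content is that equality.

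First I would make precise the map $p: \Conf_n V \to \Conf_n \mathbb{P}^{k-1}$ and its behaviour on positive tropical points. Since $\Conf_n \mathbb{P}^{k-1}$ carries the cluster $\X$-structure with cluster variables $X_i$ and $p^*(X_i) = \prod_{j} A_j^{B_{ij}}$ are subtraction-free, $p$ is a morphism of positive varieties, so it induces $p^t: \Conf_n V(\Zt) \to \Conf_n \mathbb{P}^{k-1}(\Zt)$. Tropically, $p^t$ is the linear map sending the vector of tropical $\A$-coordinates $(a_j)$ to $(\sum_j B_{ij} a_j)$ in some fixed cluster; this is exactly the cokernel of $B^T$ restricted to the relevant lattice. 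I would then argue: (i) $p^t$ is surjective, because the $X_i$ form a coordinate chart on the $\X$-variety and one can prescribe their tropical values freely, then lift along $-\val$ using that $p$ is dominant with reduced connected fibers; (ii) the fibers of $p^t$ are exactly the $T^n(\Zt)$-orbits. For (ii), the action of $T^n$ on $\Conf_n V$ tropicalizes to the action of $T^n(\Zt) \cong \Z^n$ on $\Conf_n V(\Zt)$ by translation, and since $T^n$ is precisely the kernel of $p: \Conf_n V \to \Conf_n \mathbb{P}^{k-1}$ (the dimension count already cited), its tropicalization translates along the kernel of the tropicalized map $p^t$; conversely any two points with the same image under $p^t$ differ by an element of that kernel lattice, which is identified with $T^n(\Zt)$ via $\psi$-type characters. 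Putting (i) and (ii) together gives the bijection $\Conf_n V(\Zt)/T^n(\Zt) \xrightarrow{\sim} \Conf_n \mathbb{P}^{k-1}(\Zt)$.

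Concretely, the cleanest route is to work in one explicit cluster, e.g.\ the Plücker cluster for $\tGr(k,n)$ of Figure~\ref{Gr48Seed}, where the tropical $\A$-coordinates are the tropical Plücker coordinates $P_J^t$ and the exchange matrix $B$ is the one encoded by the quiver. There $T^n(\Zt)$ acts by adding, to each $P_J^t$, the quantity $\sum_{j \in J} c_j$ for $(c_1,\dots,c_n) \in \Z^n$; this is visibly the image of the transpose of the incidence map, and one checks it coincides with the kernel of the tropical linear map $(P_J^t) \mapsto (\sum_J B_{iJ} P_J^t)$ using that the rows of $B$ are balanced (each non-frozen mutable vertex has equal in- and out-degree weighted appropriately, which forces the rank/kernel computation). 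The main obstacle is exactly this rank computation: one must confirm that the kernel of the tropicalized $p$-map is \emph{no larger} than the $T^n(\Zt)$-orbit directions, i.e.\ that $\mathrm{rank}(B) = (\#\text{cluster variables}) - n$ over $\Q$ and that the induced map on lattices has the expected saturated kernel so that no extra \emph{integral} points collapse. This is where one invokes, or re-proves, the standard fact for the Grassmannian cluster structure that the cokernel of $B$ acting on the character lattice has rank $n$ with torsion-free quotient corresponding to scaling the $n$ vectors; granting that, the equality $\Conf_n \mathbb{P}^{k-1}(\Zt) = \Conf_n V(\Zt)/T^n(\Zt)$ and hence the theorem follow immediately.
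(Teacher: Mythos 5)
Your reduction is exactly the paper's: the theorem is the displayed identity $\Conf_n \mathbb{P}^{k-1}(\Zt) = \Conf_n V(\Zt)/T^n(\Zt)$ combined with the definition of lineality spaces as $T^n(\Zt)$-orbits, and the paper offers nothing beyond that. You then go further than the paper does and attempt to justify the identity itself, which the paper essentially asserts on the strength of the two preceding sentences (the $X_i$ are $T^n$-invariant, checked in one cluster; a dimension count shows $T^n=\ker p$). Your supplementary argument is sound in outline: surjectivity of $p^t$ via lifting through $\cK_{>0}$-points, and identifying the fibers of $p^t$ with $T^n(\Zt)$-orbits by comparing the kernel of the tropicalized linear map with the image of the $T^n(\Zt)$-action. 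You correctly put your finger on the one genuine subtlety — that the equality at the level of \emph{integral} tropical points requires the kernel of the tropical $p$-map to be not just the right rank but saturated, i.e., no extra integral points collapse beyond the $T^n(\Zt)$-directions. You leave that saturation computation as an appeal to ``the standard fact for the Grassmannian cluster structure,'' which is a slight hand-wave; the paper itself does not address it either, so you are not missing anything present in the source, but a complete write-up would want to nail down that the cokernel of $B^T$ on the appropriate lattice is torsion-free of rank $n$, as you indicate. In short: correct, same reduction as the paper, with more scaffolding offered (and one spot where you honestly flag but do not close a computation).
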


The lineality spaces for horocycle laminations were analyzed in Section 5.1, while the lineality spaces for distinguished representatives were analyzed in Corollary \ref{lineality}.

\section*{Appendix: Cluster algebras}

We review here the basic definitions of cluster algebras. Cluster algebras are commutative algebras that come equipped with a collection of distinguished set of algebra generators, called \emph{cluster variables} or $\A$-coordinates. These generators are grouped in specific subsets known as {\it clusters}. A given cluster does not generate the algebra, but each cluster is a transcendence basis for the field of fractions. There is a procedure known as {\it mutation} which replaces a given cluster with a new one.

Each cluster belongs to a {\it seed}, which roughly consists of the cluster together with the extra underlying data of a $B$-matrix (see below). The $B$-matrix prescribes how to mutate from one seed to any adjacent seed. Starting from an initial choice of seed, all other seeds (and hence, all clusters, and all cluster variables) can be obtained recursively by applying a sequence of mutations. Because mutation is involutive, the resulting pattern of clusters and cluster variables does not depend on the choice of initial seed. 

Each cluster provides a coordinate system on the $\A$-space. The same combinatorial data underlying a seed gives rise to a second, related, algebra of functions, the algebra of $\X$-coordinates. Just as before, the $\X$-coordinates come in specified groupings that mutate when passing from one seed to another. The $\X$-coordinates are functions on the $\X$ space. The $\A$-coordinates and $\X$-coordinates are related by a canonical monomial transformation, which gives a map from the $\A$-space to the $\X$-space. Together, the data of the $\A$-space and the $\X$-space, along with their distinguished sets of coordinates, is called a {\it cluster ensemble}.

Now we give more precise definitions. A seed $\Sigma = (I,I_0,B,d)$ consists of the following data: 
\begin{enumerate}
\item An index set $I$ with a subset $I_0 \subset I$ of ``frozen'' indices. 
\item A rational $I \times I$ \emph{exchange matrix} $B$. It should have the property that $b_{ij} \in \Z$ unless both $i$ and $j$ are frozen.  
\item A set $d = \{d_i \}_{i \in I}$ of positive integers that skew-symmetrize $B$; that is, $b_{ij}d_j = -b_{ji}d_i$ for all $i,j \in I$.
\end{enumerate}

For most purposes, the values of $d_i$ are only important up to simultaneous scaling. Also note that the values of $b_{ij}$ where $i$ and $j$ are both frozen will play no role in the cluster algebra, though it is sometimes convenient to assign values to $b_{ij}$ for bookkeeping purposes.

In the simplest case, $B$ is skew-symmetric, i.e. each $d_i = 1$. In this case, the seed is given simply by the data $\Sigma = (I,I_0,B)$. Moreover, we can depict the $B$-matrix by a {\it quiver} (as was done in previous sections). This quiver will have vertices labelled by the set $I$. When $b_{ij} > 0$, we will have $b_{ij}$ arrows going from $j$ to $i$.

Let $k \in I \setminus I_0$ be an unfrozen index of a seed $\Sigma$.  We say another seed $\Sigma' = \mu_k(\Sigma)$ is obtained from $\Sigma$ by mutation at $k$ if we identify the index sets in such a way that the frozen variables are preserved, and the exchange matrix $B'$ of $\Sigma'$ satisfies
\begin{align}\label{eq:matmut}
b'_{ij} = \begin{cases}
-b_{ij} & i = k \text{ or } j=k \\
b_{ij} & b_{ik}b_{kj} \leq 0 \\
b_{ij} + |b_{ik}|b_{kj} & b_{ik}b_{kj} > 0.
\end{cases}
\end{align}
We can view the procedure of mutation of a seed as follows. We consider the arrows involving the vertex $k$. For every pair of arrows such that one arrow goes into $k$ and one arrow goes out of $k$, we compose these arrows to get a new arrow. Then we cancel arrows in opposite directions. Finally, we reverse all the arrows going in or out of $k$. Two seeds $\Sigma$ and $\Sigma'$ are said to be mutation equivalent if they are related by a finite sequence of mutations.

To a seed $\Sigma$ we associate a collection of \emph{cluster variables} $\{A_i\}_{i \in I}$ and a split algebraic torus $\A_\Sigma := \Spec \Z[A^{\pm 1}_I]$, where $\Z[A^{\pm 1}_I]$ denotes the ring of Laurent polynomials in the cluster variables.  If $\Sigma'$ is obtained from $\Sigma$ by mutation at $k \in I \setminus I_0$, there is a birational \emph{cluster transformation} $\mu_k: \A_\Sigma \to \A_{\Sigma'}$.  This is defined by the \emph{exchange relation}

\begin{align}\label{eq:Atrans}
\mu_k^*(A'_i) = \begin{cases}
A_i & i \neq k \\
A_k^{-1}\biggl(\prod_{b_{kj}>0}A_j^{b_{kj}} + \prod_{b_{kj}<0}A_j^{-b_{kj}}\biggr) & i = k.
\end{cases}
\end{align}
Composing these transformations yields gluing data between any tori $\A_\Sigma$ and $\A_{\Sigma'}$ of mutation equivalent seeds $\Sigma$ and $\Sigma'$.  The $\A$-space $\A_{|\Sigma|}$ is defined as the scheme obtained from gluing together all such tori of seeds mutation equivalent with an initial seed $\Sigma$.  

Given a seed $\Sigma$ we also associate a second algebraic torus $\X_\Sigma := \Spec \Z[X_I^{\pm 1}]$, where $\Z[X_I^{\pm 1}]$ again denotes the Laurent polynomial ring in the variables $\{X_i\}_{i \in I}$.  If $\Sigma'$ is obtained from $\Sigma$ by mutation at $k \in I \setminus I_0$, we again have a birational map $\mu_k: \X_\Sigma \to \X_{\Sigma'}$.  It is defined by
\begin{align}\label{eq:Xtrans}
\mu_k^*(X'_i) = \begin{cases}
X_iX_k^{[b_{ik}]_+}(1+X_k)^{-b_{ik}} & i \neq k  \\
X_k^{-1} & i = k,
\end{cases}
\end{align}
where $[b_{ik}]_+:=\mathrm{max}(0,b_{ik})$.  The $\X$-space $\X_{|\Sigma|}$ is defined as the scheme obtained from gluing together all such tori of seeds mutation equivalent with an initial seed $\Sigma$.

Now we will describe the natural map from $\A_{\Sigma}$ to $\X_{\Sigma}$. Let us assume that the entries of the $B$-matrix are all integers. Then we can define $p: \A_\Sigma \to \X_\Sigma$ by
$$p^*(X_i) = \prod_{j \in I}A_j^{B_{ij}}.$$

This formula appears to depend on the seed, but it actually intertwines the mutation of both the $\A$-coordinates and the $\X$-coordinates. In other words, if $\Sigma'$ is obtained from $\Sigma$ by mutation at $k$, there is a commutative diagram

\vspace{-2mm}
 \[
\begin{tikzcd}
\A_{\Sigma} \arrow[dashed]{r}{\mu_k} \arrow{d}{p} & \A_{\Sigma'} \arrow{d}{p'} \\
\X_{\Sigma} \arrow[dashed]{r}{\mu_k} & \X_{\Sigma'}.
\end{tikzcd}
\]
So the maps $\A_{\Sigma}$ to $\X_{\Sigma}$ glue to give a map $\A_{|\Sigma|}$ to $\X_{|\Sigma|}$.


\begin{thebibliography}{99}

\bibitem[BFZ]{BFZ} A. Berenstein, S. Fomin, A. Zelevinsky. Cluster algebras III: Upper bounds and double Bruhat cells. Duke Math. J. 126 (2005), No. 1. math.RT/0305434.


\bibitem[FG1]{FG1} V.V. Fock, A.B. Goncharov. Moduli spaces of local systems and higher Teichmuller theory. Publ. Math.  Inst. Hautes Etudes Sci., 103 (2006) 1--212.

\bibitem[FG2]{FG2} V.V. Fock, A.B. Goncharov. Cluster ensembles, quantization and the dilogarithm. Ann. Sci. Ec. Norm. Sup. vol 42, (2009) 865--929. 

\bibitem[FG3]{FG3} V.V. Fock, A.B. Goncharov. Cluster $\X$-varieties, amalgamation and Poisson-Lie groups. Algebraic Geometry and Number Theory, In Honor of Vladimir Drinfeld on his 50$^{th}$ birthday. Birkhauser, (2006) Boston, 27-68.

\bibitem[FZ]{FZ} S. Fomin, A. Zelevinsky. ``Cluster algebras. I. Foundations,'' J. Amer. Math. Soc., 15 (2): 497–-529.



\bibitem[F1]{F1} C. Fraser. Braid group symmetries of Grassmannian cluster algebras. {\tt arXiv:1702.00385} (2017). 

\bibitem[GS]{GS} A.B. Goncharov, L. Shen. Geometry of canonical bases and mirror symmetry. Invent. Math. 202 (2015) 487–-633.



\bibitem[GHKK]{GHKK} M. Gross, P. Hacking, S. Keel, M. Kontsevich. Canonical bases for cluster algebras. arXiv:1411.1394

\bibitem[K]{K} J. Kamnitzer. Hives and the fibres of the convolution morphism, Selecta Math. 13 (2007), no. 3, 483–-496.


\bibitem[Le]{Le} I. Le.  I. Le. Higher Laminations and Affine Buildings. Geom. Topol. 20 (2016) no.~3, 1673--1735.

\bibitem[Le2]{Le2} I. Le. Cluster Structures on Higher Teichmuller Spaces for Classical Groups. {\tt arXiv:1603.03523} (2016).

\bibitem[Le3]{Le3} I. Le. Intersection Pairings for Higher Laminations. {\tt arXiv:1708.00780} (2017). 

\bibitem[Lu]{Lu} G. Lusztig. Total positivity in reductive groups, Lie theory and geometry, Progr. Math., 123, Birkhauser Boston, Boston, MA, (1994), 531-568.

\bibitem{MS}{MS} R. J. Marsh and J. S. Scott. Twists of Pl\''ucker coordinates as dimer partition functions. Comm. Math. Phys., 341(3): 821–884, 2016.

\bibitem[MuS]{MuS} G. Muller, D. Speyer. The twist for positroid varieties. {\tt arXiv:1606.08383} (2016). 

\bibitem[OPS]{OPS} S. Oh, A. Postnikov, D. Speyer. Weak Separation and Plabic Graphs. Proc. Lond. Math. Soc. (3) 110 (2015), no. 3, 721--754. 

\bibitem[P]{P} A. Postnikov. Total positivity, grassmannians, and networks, (2006). http://math.mit.edu/∼apost/papers/tpgrass.pdf

\bibitem[RW]{RW} Cluster duality and mirror symmetry for Grassmannians. {\tt arXiv:1507.07817} (2015). 

\bibitem[S]{S} J. Scott. Grassmannians and cluster algebras. Proc. Lond. Math. Soc., 92(3), 345–-380, (2006).

\bibitem[SS]{SS} D. Speyer, B. Sturmfels. The Tropical Grassmannian. Adv. Geom., 4 (2004), no. 3, p. 389--411 .

\bibitem[SW]{SW} D. Speyer, L. Williams. The tropical totally positive Grassmannian, J. Algebraic Combin., Volume 22, Number 2, September 2005, pages 189--210.

\end{thebibliography}
\end{document}